%% file: article_ms_scattering.tex
\newif\ifPREPRINT

\PREPRINTtrue

\ifPREPRINT
\documentclass[a4paper,11pt]{amsart}

\usepackage{a4wide}

\usepackage[foot]{amsaddr}

\else
\documentclass{m2an}
\fi

\usepackage[breaklinks,bookmarks=false]{hyperref}
\hypersetup{%
colorlinks,%
linkcolor=blue,%
citecolor=blue,%
urlcolor=blue,%
plainpages=false,%
pdfwindowui=false,%
pdfstartview={FitH}%
}

\usepackage{pgfplots}
\usepackage{subcaption}
\usepackage{mathrsfs}
\usepackage{amssymb}

\input{general_commands}

\input{specific_commands}
\input{slope_triangle}

\begin{document}

\ifPREPRINT
\title[A multiscale finite element method for scattering problems]%
{Frequency-explicit convergence analysis of a multiscale
finite element method for highly heterogeneous scattering problems}

\author{T. Chaumont-Frelet$^\dagger$ and Z. Kassali$^\star$}

\address{\vspace{-.5cm}}
\address{\noindent \tiny \textup{$^\dagger$Inria, Univ. Lille, CNRS, UMR 8524 -- Laboratoire Paul Painlev\'e}}
\address{\noindent \tiny \textup{$^\star$Chemnitz University of Technology}}
\else
\title[A multiscale finite element method for scattering problems]%
{Frequency-explicit convergence analysis of a multiscale
finite element method for highly heterogeneous scattering problems}

\author{T. Chaumont-Frelet}
\address{Inria, Univ. Lille, CNRS, UMR 8524 -- Laboratoire Paul Painlev\'e}

\author{Z. Kassali}
\address{Chemnitz University of Technology}
\fi

\ifPREPRINT
\else
\subjclass{35B27, 35J05, 65N30}
\keywords{Time-harmonic scattering; Helmholtz equation; Homogenization; Multiscale methods; Finite element methods; High-order methods}
\fi

\begin{abstract}
We analyze the numerical approximation of time-harmonic scattering
by highly heterogeneous penetrable obstacles. These problems are especially
challenging in the high-frequency regime, where the size of the scatterer $\hD$
is much larger than the wavelength, i.e., the wavenumber $k$ is such that $k\hD \gg 1$.
Here, we further consider the situation where the scatterer contains different
materials, with a characteristic size $\eps$ such that $k\eps \ll 1$. We propose
a high-order multiscale finite element method, and provide an error analysis that
is explicit in both $k$ and $\eps$. Crucially, our error estimates suggest that using a
high-order method should reduce the computational cost for large frequencies, which is
corroborated by numerical examples.
\ifPREPRINT

\vspace{.5cm}
\noindent
{\sc Keywords.} Scattering problem; Helmholtz equation; Homogenization; Multiscale methods; Finite element methods; High-order methods
\fi
\end{abstract}

\maketitle

\section{Introduction}

We consider high-frequency wave propagation problem in highly heterogeneous media.
Specifically, we consider situation where the typical wavelength of the solution
is much smaller than the size of the computational domain, but still much larger
that the characteristic length of the heterogeneities in the propagation medium.
We focus in particular on scattering problems model by the Helmholtz equation
with wavenumber $k$ where the coefficients inside the scatterer are periodic
with a small period $\varepsilon$. Denoting by $\hD$ the diameter of the scatterer,
we therefore focus on the regime where $k\eps \ll 1$ and $k\hD \gg 1$.

The first contribution of this work is to derive homogenization error estimates
which can be made fully-explicit in the wavenumber in some situations. These
results generalises existing ones in~\cite{bouchitte_felbacq_2004a}
and~\cite{chaumontfrelet_spence_2025a}. In particular, a key novelty of the present work
is to show that there exists a constant $\Cst$ depending on $k$ but independent of $\eps$
such that
\begin{equation}
\label{eq_stability_estimate_intro}
k^2 \|u\|_{L^2(\Omega)} \leq \Cst \|f\|_{L^2(\Omega)}
\end{equation}
for all $f \in L^2(\Omega)$ and all $\eps > 0$. Here, $u$ is the solution to
the scattering problem with right-hand side $f$. The proof
of~\eqref{eq_stability_estimate_intro} based on~\cite{bouchitte_felbacq_2004a}
where homogenization results for a single right-hand side $f$ are established,
and we hereby make such results uniform for all $L^2(\Omega)$ right-hand sides.
Under appropriate assumption on the coefficients inside the scatterer, we are
further able to derive full-explicit bounds on $\Cst$
following~\cite{chaumontfrelet_spence_2025a}.

The second theoretical contribution of the present work is to analyze
a multiscale finite element method to discretize the highly heterogeneous
scattering problem. We specifically, focus on the multiscale hybrid-mixed
(MHM) method, where, given a computational mesh with element size $H$,
multiscale basis functions are computed solving Neumann problems on each
mesh element with prescribed polynomial boundary data of degree $\ell \geq 0$.
Although we focus the MHM, the methodology is generic and can, at least in principle,
be applied to other multiscale methods such as
MsFEM~\cite{allaire2005multiscale,hou_convergence_1999,sangalli2003capturing}
where Dirichlet data are prescribed to compute the basis functions. Assuming that the
basis functions are exactly computed, we show that if $\eps \ll H$,
$\Cst^2 (k\eps)^{1/2} \ll 1$ and $\Cst (kH)^{\ell+1} \ll 1$,
then the finite element solution is quasi-optimal, and we have
\begin{equation}
\label{eq_error_estimate_intro}
k\|u-u_H\|_{L^2(\Omega)} + \|\grad(u-u_h)\|_{\BL^2(\Omega)}
\lesssim
kH + \Cst (kH)^{\ell+1} + \Cst^2 (k\eps)^{1/2},
\end{equation}
where $u_H$ is the discrete solution and $\Cst$ is the stability constant
from~\eqref{eq_stability_estimate_intro}. Classically~\cite{hou_convergence_1999},
when $\eps \sim H$, the method suffers from the so-called ``resonance'' effect,
and extra terms involving the ratio $\eps/H$ need to be added
into~\eqref{eq_error_estimate_intro}. The estimate
in~\eqref{eq_error_estimate_intro} generalises standard error estimates
from~\cite{bernkopf2025wavenumber,chaumont-frelet_wavenumber_2020,lafontaine_wavenumber-explicit_2022}
for standard finite element methods in the absence of highly
oscillating coefficient. Crucially, we see that although the stability
constant $\Cst$ grows with the frequency, such growth can be effectively
mitigated by employing high-order elements with a ``large'' value of $\ell$.
The fact that high-order elements are better suited to solve high-frequency
wave propagation problems is now well-established in the literature, but this
contributions is, to the best of our knowledge, the first to establish it for
multiscale problems.

The estimates derived in this work assume exact element-wise PDE solves 
to compute the basis functions. In practice, this is unfeasible, and a
second-level discretization with mesh size $h \sim \eps$ is employed within
in each element. We do employ such a methodology numerically, but we do
not analyse the effect of the second-level discretization in our theoretical
results.

We note that more modern multiscale numerical methods based on localized
orthogonal decomposition (LOD) are able to be robust for large frequencies under
the sole assumption that $kH \ll 1$,
see~\cite{brown_multiscale_2015,gallistl_stable_2015,peterseim_eliminating_2015,peterseim_computational_2020}.
In particular, such methods
do not suffer from the resonant effect when $\eps \sim H$. However, LOD-based methods
require to solve local PDE problems on patches of elements, rather than on single elements,
which can make them much more expensive in practice. In contrast, in the presence
of scale separation where $\eps \ll H$, the methodology proposed here is likely to
be more efficient since it only relies on element-wise PDE solves with a moderate
polynomial degree $\ell$. Similar comments also seem to apply to MsGFEM
from~\cite{ma2023wavenumber}.

Our theoretical findings are complemented with numerical examples.
These examples highlight the key features of the theory, and in particular,
they show that there is indeed an interest in using a high-order multiscale
method for high-frequency problems.

The remainder of this work is organized as follows.
Section~\ref{section_setting} makes the setting precise, recall some basic results
from the literature, and rigorously states all the assumptions we make
on the scattering problem, including the multiscale coefficients.
Section~\ref{section_uniform_stability} is dedicated to the proof
of the uniform stability estimate in~\eqref{eq_stability_estimate_intro}.
In Section~\ref{section_homogenization} we couple this stability result
with standard tools from the literature to obtain frequency-explicit
homogenization error estimates for the scattering problem. Section~\ref{section_mhm}
introduce the MHM method, which is subsequently analyzed in
Section~\ref{section_mhm_discretization} where we established
in~\eqref{eq_error_estimate_intro}. Numerical examples are
finally presented in Section~\ref{section_numerical_examples}.
Finally, Appendix~\ref{appendix_homogenization} contains auxiliary
homogenization results for the element wise PDE problems used to compute
the basis functions in the MHM method.

\section{Setting}
\label{section_setting}

\subsection{Geometrical configuration}

Let $D \subset \R^d$, $d=2,3$, be a smooth bounded domain representing
the obstacle. We assume for simplicity that $D$ contains the origin, and
denote by $\hD$ its diameter.
We also consider another smooth domain $\Om \subset \R^d$ such that
$D \Subset \Om$, and impose (approximate) transparent conditions
on $\partial \Om$. Throughout we denote by $\bn$ the
normal unit vector pointing outward $\Om$. We also note
$\Gamma \eq \partial D$. For simplicity, we also assume
that $d(\partial \Om,D) \geq \hD$.

Inside the obstacle $D$, we will consider highly heterogeneous coefficients,
and $0 < \eps \leq \hD$ will denote their characteristic length scale. Besides,
we will denote by $k > 0$ the wavenumber. We are especially interested in the
regime where $k\eps \ll 1$ and $k\hD \gg 1$.

\begin{remark}
The requirement that $d(\partial \Omega),D) \geq \hD$ is only used for
the lower bounds in Propositions~\ref{proposition_bound_homo}
and~\ref{proposition_bound_eps}, which allows to simplify other estimates.
However, the analysis can be performed, albeit with heavier notation, without
this assumption.
\end{remark}

\subsection{Function spaces}

If $U \subset \mathbb R^d$ is a domain with a Lipschitz boundary,
$L^2(U)$ is the Lebesgue space of square-integrable functions,
and we respectively denote by $\|{\cdot}\|_{L^2(U)}$ and $({\cdot},{\cdot})_U$
its usual norm and inner product. For $m \geq 0$, $H^m(U)$ then stands for the standard
Sobolev space of order $m$. Its semi-norm is denoted by $|{\cdot}|_{H^m(U)}$.
In addition, when $m=1$, we will employ the weighted norm
\begin{equation*}
\|{\cdot}\|_{H^1_k(U)}^2 \eq k^2\|{\cdot}\|_{L^2(U)}^2 + |{\cdot}|_{H^1(U)}^2.
\end{equation*}
We employ similar notation, but with boldface, for spaces of vector fields.
$H^{1/2}(\partial U)$ is the space of traces of $H^1(U)$, and we denote by
$H^{-1/2}(\partial U)$ its (anti)-dual. The notation $H^1(\Om \setminus \Gamma)$
and $H^2(\Om \setminus \Gamma)$ are respectively used for functions that whose
restriction to both $D$ and $\Om \setminus \overline{D}$ are $H^1$ and $H^2$
regular. We employ the same notation as above for the corresponding norm
and semi-norm, whereby the differential operators are applied independently
in $D$ and $\Om \setminus \overline{D}$. We refer the reader
to~\cite{adams_fournier_2003a} for an introduction to these spaces.

We also employ the standard notation $\BH_0(\ddiv,U)$ for the Sobolev
functions of vector field $\bq \in \BL^2(U)$ with $\div \bq \in L^2(\Om)$
and vanishing normal trace $\partial U$. This space is described in
depth in~\cite{girault_raviart_1986a}.

We denote by $Y \eq (0,1)^d$ the unit cube, which we will use as a periodic cell
to mathematically describe multiscale coefficients. Then $W^{1,\infty}_\sharp(Y)$
is the set of periodic Lipschitz continuous functions, and we employ the notation
$\underline{\BW}^{1,\infty}_\sharp(Y)$ for symmetric matrix-valued functions which are component-wise
$W^{1,\infty}_\sharp(Y)$. We also employed $\BW^{1,\infty}_\sharp(Y)$ for vector-fields.
The norms and seminorms of this spaces are denoted by $\|{\cdot}\|_{W^{1,\infty}(Y)}$
and $|{\cdot}|_{W^{1,\infty}(Y)}$ with obvious modifications for vectors and matrices.
We also employ the notation $H^1_\sharp(Y)$ for the subspace of $H^1(Y)$ with periodic
boundary conditions. A detailed description of spaces with periodic boundary conditions
on $Y$ can be found in~\cite{cioranescu_donato_1999a}.

\subsection{Periodic patterns}
\label{section_periodic_patterns}

Throughout, we consider fixed smooth periodic patterns
$\widehat{\mu} \in W^{1,\infty}_\sharp(Y)$ and
$\widehat{\MA} \in \MW_\sharp^{1,\infty}(Y)$.
We assume standard ellipticity and boundedness conditions, namely that
\begin{equation*}
0
<
\Amin \leq \widehat{\MA}(\by) \bxi \cdot \bxi \leq \Amax
<
\infty  
\qquad \forall \bxi \in \R^d \text{ with } |\bxi| = 1
\end{equation*}
and
\begin{equation*}
0 < \mu_{\min} \leq \widehat{\mu}(\by) \leq \mu_{\max} < +\infty
\end{equation*}
for a.e. $\by \in Y$. The authors believe that the regularity
requirements on $\widehat{\mu}$ and $\widehat{\MA}$ might be alleviated,
but at the price of extra non-trivial technicalities.

\subsection{Hidden constants}
\label{section_hidden_constants}

For two real numbers $A,B$, we employ the notation $A \lesssim B$
if there exists a constant $C$ independent of $A$ and $B$ such that
$A \leq CB$. We also similarly write $A \gtrsim B$ whenever $B \lesssim A$.
In these notation, the constant $C$ is allowed to depend on
$\widehat{\mu}$ and $\widehat{\MA}$, the shape of the obstacle
$D$ and the shape of $\Om$, as well as on $\max(k\eps,1)$ and
$\min(k\hD,1)$. These last two dependencies are allowed to simplify
expressions using that $k\eps \lesssim (k\eps)^{1/2}$ and $k\hD \gtrsim 1$.
In Section~\ref{section_mhm}, we additionally allow the hidden
constant $C$ to depend on the mesh shape-regularity measure $\CK$
introduced in Section~\ref{section_curved_mesh} and on
the polynomial degree $\ell$.
In all these cases, the hidden constant is always independent
of $\eps$, $k$, and the minimal and maximal mesh sizes $H_{\min}$
and $H_{\max}$.

\subsection{Oscillating functions}

We now introduce some useful notation for highly oscillatory functions.
For a vector space $\mathbb{K}$, given
$\widehat \phi: D \times Y \to \mathbb{K}$, we define the corresponding
oscillating function $\phi^\eps: D \to \mathbb{K}$ by
\begin{equation*}
\phi^\eps (\bx) \eq \widehat \phi\left (\bx,\left \{\frac{\bx}{\eps}\right \}\right )
\end{equation*}
for all $\bx \in D$, where $\{t\} \eq t - \lfloor t \rfloor$ denotes the fractional
part of $t \in \R$, and is applied component-wise to vectors. We also employ the notation
\begin{equation*}
\mean{\widehat{\phi}}(\bx) \eq \int_Y \widehat{\phi}(\by)d\by,
\end{equation*}
for $\bx \in D$, and we note that the following identity holds true
\begin{equation}
\label{eq_delta_derivatives}
\pd{\widehat{\phi}^\eps}{\bx_j}
=
\left (\pd{\widehat{\phi}}{\bx_j}\right )^\eps
+
\frac{1}{\eps}
\left (\pd{\widehat{\phi}}{\by_j}\right )^\eps
\end{equation}
for $1 \leq j \leq d$. We also employ the same notation for periodic patterns
$\widehat \phi: Y \to \mathbb{K}$. In particular, we
will consider the coefficients $\widehat{\mu}^\varepsilon$
and $\widehat{\MA}^\varepsilon$ inside the obstacle $D$.

\subsection{Highly heterogeneous scattering problem}

With this notation, we can now introduce our model scattering problem.
We first set the following shorthand notation for the coefficients
\begin{equation*}
\mu_\eps \eq \left \{
\begin{array}{ll}
\widehat{\mu}^\eps & \text{ in } D,
\\
1 & \text{ in } \Omega \setminus D,
\end{array}
\right .
\qquad
\MA_\eps \eq \left \{
\begin{array}{ll}
\widehat{\MA}^\eps & \text{ in } D,
\\
\tens{I} & \text{ in } \Omega \setminus D.
\end{array}
\right .
\end{equation*}
	
Then, for $k > 0$ and given $f \in L^2(\Om)$,
we then consider the following Helmholtz problem: Find
$u_\eps \in H^1(\Om)$ such that
\begin{equation*}
\left \{
\begin{array}{rcll}
- k^2 \mu_\eps u_\eps
- \div \left (\MA_\eps \grad u_\eps \right )
&=&
f
&
\text{ in } \Om,
\\
\grad u_\eps \cdot \bn - i k u_\eps
&=&
0
&
\text{ on } \partial \Om.
\end{array}
\right .
\end{equation*}
More rigorously, in weak form, we look for $u_\eps \in H^1(\Om)$ such that
\begin{equation}
\label{eq_helmholtz_eps_weak}
b_\eps(u_\eps,v) = (f,v)_\Om
\end{equation}
for all test functions $v \in H^1(\Om)$, where we introduced the sesquilinear form
\begin{equation*}
b_\eps(\phi,v)
\eq
-k^2(\mu_\eps \phi,v)_\Om-ik(\phi,v)_{\partial \Om}+(\MA_\eps\grad \phi,\grad v)_\Om
\end{equation*}
for any trial function $\phi \in H^1(\Om)$.

\subsection{Homogenized coefficients}
\label{section_homogenized_coefficients}

We now define homogenized versions of the oscillating
coefficients introduced above. For $\widehat{\mu}^\eps$, the homogenized
coefficient $\muH$ is the mean value of $\widehat{\mu}$ over
the periodic cell $Y$, i.e.,
\begin{equation}
\muH \eq \mean{\widehat{\mu}}.
\end{equation}

For $\widehat{\MA}^\eps$, the averaging process is more intricate,
and we follow standard homogenization theory~\cite{allaire_1992a,cioranescu_donato_1999a}.
In order to determine $\MAH$, we first introduce, for $1 \leq j \leq d$,
the auxiliary functions $\widehat{\chi}_j \in H^1_\sharp(Y) $ as the solution to
\begin{equation}
\label{eq_definition_hchi}
-\grad_{\by} \cdot (\widehat{\MA} \grad_{\by} \widehat{\chi}_j)
=
-
\sum_{\ell=1}^d
\frac{\partial}{\partial y_\ell}\widehat{\MA}_{j\ell},
\end{equation}
with zero mean value in $Y$. We will use the notation $\widehat{\bchi} \in \BH^1_\sharp(Y)$
for the vector with components $(\widehat{\chi}_1,\ldots,\widehat{\chi}_d)$.
We have $\widehat{\bchi} \in \BW_\sharp^{1,\infty}(Y)$ with
\begin{equation}
\label{eq_estimate_hchi}
\|\widehat{\bchi}\|_{\BW^{1,\infty}} \lesssim 1.
\end{equation}
This follows by standard elliptic regularity, and is detailed
in~\cite{chaumont-frelet_scattering_2021}. Similar arguments are
also given in more detail in the proof of Lemma~\ref{lemma_heta} below.

Let the matrix $\widehat{\MC}$ be defined by
\begin{equation}
(\widehat{\MC})_{j\ell} := \frac{\partial \widehat{\chi}_\ell}{\partial y_j}, 
\qquad 1 \le j,\ell \le d.
\end{equation}
With
\begin{equation}
\widehat{\MB} := \widehat{\MA}(\MI - \widehat{\MC}),
\end{equation}
the homogenized matrix coefficient $\MAH$ is the mean value of $\widehat{\MB}$
over the periodic cell $Y$, i.e.,
\begin{equation}
(\MAH)_{j\ell} = \mean{\widehat{\MB}_{j\ell}},
\qquad 1 \le j,\ell \le d.
\end{equation}

\subsection{Homogenized problem}

We can now consider a homogenized problem where the coefficients become
piecewise constants. Namely, we let
\begin{equation*}
\mu_0 \eq \left \{
\begin{array}{ll}
\muH & \text{ in } D,
\\
1 & \text{ in } \Omega \setminus D,
\end{array}
\right .
\qquad
\MA_0 \eq \left \{
\begin{array}{ll}
\MAH & \text{ in } D,
\\
\tens{I} & \text{ in } \Omega \setminus D.
\end{array}
\right .
\end{equation*}

Then, given $f \in L^2(\Om)$ and $k > 0$, the homogenized problem
consists in finding $u_0 \in H^1(\Om)$ such that
\begin{equation*}
\left \{
\begin{array}{rcll}
- k^2 \mu_0 u_0
- \div \left (\MA_0\grad u_0 \right )
&=&
f
& \text{in } \Om,
\\
\grad u_0 \cdot \bn - i k u_0
&=&
0
&
\text{on } \partial \Om.
\end{array}
\right .
\end{equation*}
Again, more rigorously, we understand this equation in a weak form,
whereby we demand that
\begin{equation}
\label{eq_helmholtz_homo_weak}
b_0(u_0,v) = (f,v)_{\Om}
\end{equation}
for all $v \in H^1(\Om)$, with $b_0$ defined like $b_\eps$
with $\mu_\eps$ and $\MA_\eps$ replaced by $\mu_0$ and $\MA_0$.

\subsection{Elementary stability results}

Since $D$ is smooth and the coefficients $\mu_0$ and $\MA_0$
are piecewise constant, the following theorem is a standard
consequence of the unique continuation principle (see
e.g.~\cite{ball_capdebosq_tseringxiao_2012a,wolff_1992a})
and (piecewise) elliptic regularity for transmission problem
(see e.g.~\cite{chaumont-frelet_wavenumber_2020,costabel2010corner}).
The lower bound on $\CstH$ can be shown
following~\cite{chaumont-frelet_wavenumber-explicit_2022}
using the assumption that $d(\partial \Om,D) \geq \hD$, see
also~\cite{galkowski2019optimal}.

\begin{proposition}
\label{proposition_bound_homo}
For all $f \in L^2(\Om)$, there exists a unique solution $u_0 \in H^1(\Om)$
to the homogenized problem in~\eqref{eq_helmholtz_homo_weak}. In addition,
we have
\begin{equation}
\label{eq_bound_homo_L2}
\CstH
\eq
\max_{\substack{f \in L^2(\Om) \\ \|f\|_{L^2(\Om)} = 1}}
k^2\|u_0\| < +\infty,
\end{equation}
and
\begin{equation}
\label{eq_bound_homo_H2}
k\|u_0\|_{H^1_k(\Om)} + |u_0|_{H^2(D)}
\lesssim
\CstH \|f\|_{L^2(\Om)}.
\end{equation}
Besides, the lower bound $\CstH \gtrsim 1$ holds true.
\end{proposition}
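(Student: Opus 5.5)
We will prove the three assertions in turn: well-posedness, then the $H^1_k$ and $H^2$ bounds, then the lower bound $\CstH \gtrsim 1$. Throughout we use that $\mu_0$ and $\MA_0$ are piecewise constant, positive and elliptic.

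\textbf{Well-posedness.} The form $b_0$ satisfies a G\aa rding inequality. Indeed, for $v \in H^1(\Om)$,
\begin{equation*}
\operatorname{Re} b_0(v,v) + 2k^2\mu_{\max}\|v\|_{L^2(\Om)}^2 \gtrsim \|v\|_{H^1_k(\Om)}^2 .
\end{equation*}
The embedding $H^1(\Om) \hookrightarrow L^2(\Om)$ is compact, so the Fredholm alternative reduces existence to uniqueness.

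For uniqueness, let $b_0(u,v)=0$ for all $v$. Taking $v=u$ and the imaginary part gives $k\|u\|_{L^2(\partial\Om)}^2=0$. The Robin condition then also forces $\grad u\cdot\bn=0$ on $\partial\Om$. Extending $u$ by zero outside $\Om$ yields a solution of a transmission Helmholtz problem with zero Cauchy data on $\partial\Om$. Unique continuation, applied first in the constant-coefficient region $\Om\setminus\overline D$ and then across the smooth interface $\Gamma$ using the transmission conditions (see \cite{ball_capdebosq_tseringxiao_2012a,wolff_1992a}), gives $u=0$.

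Hence the solution operator $f\mapsto u_0$ is bounded from $L^2(\Om)$ to $H^1(\Om)$, and in particular into $L^2(\Om)$. This shows that $\CstH<+\infty$. The maximum in \eqref{eq_bound_homo_L2} is a supremum equal to an operator norm; the word ``max'' can be justified by compactness of the solution operator into $L^2(\Om)$, but only finiteness is needed.

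\textbf{Bounds \eqref{eq_bound_homo_H2}.} Take $v=u_0$ in \eqref{eq_helmholtz_homo_weak} and use the real part:
\begin{equation*}
|u_0|_{H^1(\Om)}^2 \lesssim k^2\|u_0\|_{L^2(\Om)}^2+\|f\|_{L^2(\Om)}\|u_0\|_{L^2(\Om)} .
\end{equation*}
Multiplying by $k^2$ and using $k^2\|u_0\|_{L^2(\Om)}\le\CstH\|f\|_{L^2(\Om)}$ gives
\begin{equation*}
k^2|u_0|_{H^1}^2\lesssim(\CstH^2+\CstH)\|f\|_{L^2(\Om)}^2 ,
\end{equation*}
which is bounded by $\CstH^2\|f\|_{L^2(\Om)}^2$ once we know $\CstH\gtrsim1$. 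This yields the $H^1_k$ part of \eqref{eq_bound_homo_H2}.

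For the $H^2(D)$ seminorm, we use piecewise elliptic regularity for the transmission problem $-\div(\MA_0\grad u_0)=f+k^2\mu_0u_0$ on the smooth interface $\Gamma$, with Robin data on $\partial\Om$ (see \cite{chaumont-frelet_wavenumber_2020,costabel2010corner}). We apply it to a cutoff of $u_0$ near $\overline D$ so that $\partial\Om$ plays no role. This gives
\begin{equation*}
|u_0|_{H^2(D)}\lesssim\|f\|_{L^2(\Om)}+k^2\|u_0\|_{L^2(\Om)}+\|u_0\|_{H^1(\Om)} ,
\end{equation*}
with lower-order terms absorbed using $k\hD\gtrsim1$. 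Each term on the right is $\lesssim\CstH\|f\|_{L^2(\Om)}$.

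\textbf{Lower bound $\CstH\gtrsim1$.} This is the step we expect to be the main obstacle, since it requires a construction. Following \cite{chaumont-frelet_wavenumber-explicit_2022}, we will build a quasimode supported in $\Om\setminus\overline D$, where the coefficients are the identity. Because $d(\partial\Om,D)\ge\hD$, this region contains a ball $B$ of radius $\sim\hD$ at positive distance from both $D$ and $\partial\Om$.

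Let $\chi$ be a cutoff supported in $B$, with $\|\grad\chi\|_{L^\infty}\lesssim\hD^{-1}$, and set
\begin{equation*}
u=\chi e^{ik\bx\cdot\bd}, \qquad |\bd|=1 .
\end{equation*}
Then $u$ has compact support, so it solves the problem with right-hand side
\begin{equation*}
f=-k^2u-\Delta u=-(2ik\bd\cdot\grad\chi+\Delta\chi)e^{ik\bx\cdot\bd} .
\end{equation*}
Consequently
\begin{equation*}
\|f\|_{L^2(\Om)}\lesssim\bigl(k\hD^{-1}+\hD^{-2}\bigr)\hD^{d/2},
\qquad
\|u\|_{L^2(\Om)}\sim\hD^{d/2} .
\end{equation*}
By uniqueness this $u$ is $u_0$, and therefore
\begin{equation*}
\CstH\ge\frac{k^2\|u\|_{L^2(\Om)}}{\|f\|_{L^2(\Om)}}\gtrsim\frac{(k\hD)^2}{k\hD+1}\gtrsim k\hD\gtrsim1 .
\end{equation*}
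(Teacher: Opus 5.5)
Your proposal is correct and follows the route the paper indicates. The paper only sketches this route by citation: a G\aa rding/Fredholm argument with unique continuation gives well-posedness, piecewise elliptic regularity for the transmission problem (applied to a cutoff of $u_0$) gives the $H^2(D)$ bound, and a cutoff plane-wave quasimode supported in the identity-coefficient region $\Om\setminus\overline{D}$ gives the lower bound $\CstH\gtrsim k\hD\gtrsim 1$, with $d(\partial\Om,D)\geq\hD$ guaranteeing the needed ball. The only cosmetic fix is to write the lower-order term in your $H^2$ estimate in scale-consistent form, $\hD^{-1}|u_0|_{H^1(\Om)}+\hD^{-2}\|u_0\|_{L^2(\Om)}$, rather than the unweighted $\|u_0\|_{H^1(\Om)}$; your appeal to $k\hD\gtrsim 1$ then absorbs it exactly as you intend.
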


\begin{remark}
The behaviour of the constant $\CstH$ in terms of $k$ has been
thoroughly studied in the literature, and we present some facts
here. If the obstacle $D$ is star-shaped, $\mu_0 \leq 1$ and $\MA_0 \leq \MI$
in the sense of quadratic forms, then~$\CstH \sim k\hD$,
see~\cite{barucq_stability_2016,moiola_acoustic_2018}.
When these assumption are not satisfied, it can still be shown
that there exists $\beta > 0$ such that for all $\delta > 0$ solely depending on $d$,
there exists $K_\delta^\otimes \subset \R_+$ with $|K_\delta^\otimes| \leq \delta$
such that~$\CstH \lesssim_\delta (k\hD)^\beta$ whenever $k \notin K_\delta^\otimes$,
see~\cite{lafontaine_for_2021}. Here, the notation $\lesssim_\delta$ means
that in addition to the dependencies listed in Section~\ref{section_hidden_constants},
the hidden constant can also depend on $\delta$.
\end{remark}

For the highly heterogeneous scattering problem, $\eps$-dependant estimates
may be classically obtained as above. Indeed, for each $\eps$, the
coefficients $\mu_\eps$ and $\MA_\eps$ are still piecewise smooth.
The lower bound on $\Cste$ can also be obtained as in the homogenized
case.

\begin{proposition}
\label{proposition_bound_eps}
Let $0 < \varepsilon \leq \hD$ and $k > 0$. For all $f \in L^2(\Om)$,
there exists a unique $u_\eps \in H^1(\Om)$ solution to~\eqref{eq_helmholtz_eps_weak},
and we have
\begin{equation}
\label{eq_bound_eps_L2}
\Cste
\eq
k^2\sup_{\substack{f \in L^2(\Om) \\ \|f\|_{L^2(\Om)} = 1}}
\|u_\eps\|_{L^2(\Om)}
<
+\infty.
\end{equation}
In addition, if the right-hand side in~\eqref{eq_helmholtz_eps_weak}
is replaced by $\langle \psi, v \rangle$ for a generic $\psi \in (H^1(\Om))'$,
then there is still a unique solution $u_\eps \in H^1(\Om)$, we have
\begin{equation}
\label{eq_bound_eps_H1}
\|u_\eps\|_{H^1_k(\Om)} \lesssim \Cste \|\psi\|_{H^{-1}_k(\Om)}.
\end{equation}
We also have $\Cste \gtrsim 1$.
\end{proposition}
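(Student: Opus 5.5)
For fixed $\eps$ the coefficients $\mu_\eps$ and $\MA_\eps$ are Lipschitz in $D$ and constant outside, with a jump only across the smooth interface $\Gamma$. The plan is therefore the standard Fredholm route for the Helmholtz problem with a Robin (impedance) condition, carried out at fixed $\eps$; no uniformity in $\eps$ is claimed.

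\textbf{Step 1: uniqueness and existence.} Let $u \in H^1(\Om)$ satisfy $b_\eps(u,v) = 0$ for all $v$.
\begin{itemize}
\item Taking $v = u$ and keeping the imaginary part gives $k\|u\|_{L^2(\partial\Om)}^2 = 0$, so $u = 0$ on $\partial\Om$.
\item The Robin condition then gives $\grad u \cdot \bn = 0$ on $\partial\Om$.
\item Near $\partial\Om$ the coefficients are identically $1$ and $\tens I$, since $d(\partial\Om,D) \geq \hD > 0$. Extending $u$ by zero outside $\Om$ therefore produces a solution of $-k^2 u - \Delta u = 0$ in a neighbourhood of the exterior that vanishes on an open set.
\item The unique continuation principle for operators with Lipschitz principal part applies, since $\widehat{\MA}^\eps$ is Lipschitz in $D$ and the interface is smooth (see~\cite{ball_capdebosq_tseringxiao_2012a,wolff_1992a}). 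It gives $u \equiv 0$ in $\Om$.
\end{itemize}
For existence, write $b_\eps$ as a coercive form on $H^1(\Om)$, namely $(\MA_\eps \grad\cdot,\grad\cdot) + k^2(\cdot,\cdot)$, plus a compact perturbation; the perturbation is compact by Rellich and compactness of the trace. The Fredholm alternative then gives existence, and this covers any right-hand side $\psi \in (H^1(\Om))'$. In particular the solution operator $\psi \mapsto u_\eps$ is bounded, which yields $\Cste < +\infty$ and a finite constant in~\eqref{eq_bound_eps_H1}. The bound involves some constant $C(\eps,k)$.

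\textbf{Step 2: dependence on $\Cste$ only.} To sharpen~\eqref{eq_bound_eps_H1} so that it involves $\Cste$ alone, I use a duality argument in two parts.
\begin{itemize}
\item Take $v = u_\eps$ and use the real part:
\[
\|u_\eps\|_{H^1_k}^2 \lesssim k^2\|u_\eps\|_{L^2}^2 + \|\psi\|_{H^{-1}_k}\|u_\eps\|_{H^1_k}.
\]
It therefore suffices to bound $k\|u_\eps\|_{L^2}$ by $\Cste\|\psi\|_{H^{-1}_k}$.
\item Let $\xi$ solve the adjoint problem with right-hand side $u_\eps$. The adjoint problem is the same Helmholtz problem with conjugated impedance, so $\Cste$ also bounds it. Then $k^2\|\xi\|_{L^2} \leq \Cste \|u_\eps\|_{L^2}$. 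The energy identity above, applied to $\xi$, gives
\[
\|\xi\|_{H^1_k} \lesssim k\|\xi\|_{L^2} + k^{-1}\|u_\eps\|_{L^2} \lesssim \Cste k^{-1}\|u_\eps\|_{L^2},
\]
using $\Cste \gtrsim 1$.
\item Consequently
\[
\|u_\eps\|_{L^2}^2 = \overline{b_\eps(u_\eps,\xi)} = \overline{\langle \psi,\xi\rangle} \leq \|\psi\|_{H^{-1}_k}\|\xi\|_{H^1_k},
\]
so $k\|u_\eps\|_{L^2} \lesssim \Cste\|\psi\|_{H^{-1}_k}$.
\end{itemize}
All hidden constants here depend only on $\Amin,\Amax,\mu_{\min},\mu_{\max}$.

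\textbf{Step 3: lower bound $\Cste \gtrsim 1$.} Choose $f$ supported in $\Om\setminus\overline D$, away from $D$; this uses $d(\partial\Om,D) \geq \hD$. Pick $f$ to be a smooth bump of width comparable to $k^{-1}$ or to $\hD$, in the spirit of~\cite{chaumont-frelet_wavenumber-explicit_2022}. Testing the equation against a suitable cutoff function $v$ supported where $\mu_\eps = 1$ and $\MA_\eps = \tens I$ gives
\[
|(f,v)| \lesssim k^2\|u_\eps\|\,\|v\| + \dots
\]
Rearranging yields $k^2\|u_\eps\| \gtrsim \|f\|$, using $k\hD \gtrsim 1$.

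\textbf{Main obstacle.} I expect the main obstacle to be Step 3. The argument must avoid any $\eps$-dependence, which it does because only the homogeneous region is used. A simpler alternative is to take $f = \chi$ equal to a smooth cutoff near $\partial\Om$ and argue by contradiction via a small-$\|u_\eps\|$ estimate. Everything else is routine.
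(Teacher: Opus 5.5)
Your proposal is correct and follows the route the paper indicates. Well-posedness comes from the Fredholm alternative plus unique continuation, propagated across the smooth interface $\Gamma$. The $H^{-1}_k\to H^1_k$ bound comes from the standard adjoint/duality argument. The lower bound comes from a test function supported in the homogeneous region $\Om\setminus\overline D$, which the paper delegates to \cite{chaumont-frelet_wavenumber-explicit_2022}.

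In Step~3, state explicitly that for $v\in C^\infty_c(\Om\setminus\overline D)$ one has $(f,v)_\Om=(u_\eps,-(k^2+\Delta)v)_\Om$. For a bump of width $\sim\hD$, the ``$\dots$'' then becomes $\|u_\eps\|_{L^2(\Om)}\|\Delta v\|_{L^2(\Om)}\lesssim \hD^{-2}\|u_\eps\|_{L^2(\Om)}\|v\|_{L^2(\Om)}$.
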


\begin{remark}
\label{remark_bound_layers}
In contrast to the homogenized problem, very little seems to be known
about the behaviour of $\Cste$ for generic values of $k$ and $\varepsilon$.
In fact, the only results we are aware of are the ones
from~\cite{chaumont-frelet_scattering_2021}. There, the authors
consider the case where (i) $\widehat{\mu}$ and $\widehat{\MA}$ do not
depend on one space variable (say, $\by_d$), (ii) the wave speed is larger
than one in $D$, and (iii) the obstacle satisfies the geometric condition
$\bx_d \bn_d \geq 0$ on $\Gamma$, where $\bn$ is the unit normal vector
pointing outside $D$. In this setting, the estimate
\begin{equation}
\label{eq_bound_cste_layer}
\Cste \lesssim (k\hD)^3
\end{equation}
has been established, uniformly for $0 < \eps \leq \hD$,
in~\cite[Theorem 1.5]{chaumont-frelet_scattering_2021}.
The estimate in~\eqref{eq_bound_cste_layer} additionally holds
under relaxed assumptions on $\widehat{\mu}$ and $\widehat{\MA}$
as compared to the one we made in Section~\ref{section_periodic_patterns},
allowing in particular for piecewise constant periodic patterns.
The theory in~\cite{chaumont-frelet_scattering_2021} is actually
performed with a Dirichlet-to-Neumann operator, as opposed
to an absorbing boundary condition, on $\partial \Omega$. However,
the results also apply to the present setting.
\end{remark}


\section{Uniform stability estimates for the highly heterogeneous scattering problem}
\label{section_uniform_stability}

The goal of this section is to establish that the constant $\Cste$
is uniformly bounded in $\eps$ for any fixed $k$. This result is
already known for the specific configuration described in Remark~\ref{remark_bound_layers}
above. However, to the best of our knowledge, it has not been rigorously
established in general. Our proof is based on arguments by contradiction
from~\cite{bouchitte_felbacq_2004a}.

\begin{lemma}[]
\label{lemma_continuity_Cste}
For any fixed $k > 0$, $\Cste$ is a continuous function of $\eps$.
\end{lemma}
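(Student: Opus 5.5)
We regard $\Cste$ as the operator norm of the solution map and show that this map depends continuously on $\eps$ in operator norm; continuity of $\Cste$ then follows. For fixed $k>0$ and $0<\eps\leq\hD$, let $B_\eps : H^1(\Om)\to (H^1(\Om))'$ be the operator induced by $b_\eps$, i.e. $\langle B_\eps\phi,v\rangle \eq b_\eps(\phi,v)$. Equip $H^1(\Om)$ with $\|{\cdot}\|_{H^1_k(\Om)}$ and the dual with $\|{\cdot}\|_{H^{-1}_k(\Om)}$. By Proposition~\ref{proposition_bound_eps}, $B_\eps$ is invertible with $\|B_\eps^{-1}\|\lesssim \Cste$. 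Let $S_\eps : L^2(\Om)\to L^2(\Om)$, $f\mapsto u_\eps$, which is $B_\eps^{-1}$ composed with the embedding of $L^2$ into the dual. Then $\Cste = k^2\|S_\eps\|_{L^2\to L^2}$, and hence
\begin{equation*}
|\Cste - C_{\eps'}| \leq k^2\|S_\eps - S_{\eps'}\|_{L^2(\Om)\to L^2(\Om)} .
\end{equation*}

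\textbf{Step 1: the coefficients are continuous in $\eps$ in $L^\infty$.} Fix $\eps>0$ and $\eps'$ close to $\eps$. Since $\widehat\mu$ is $Y$-periodic and Lipschitz, the fractional part plays no role: $\widehat\mu^\eps(\bx)=\widehat\mu(\bx/\eps)$ for the periodic extension. Hence, for $\bx\in D$,
\begin{equation*}
|\widehat\mu^\eps(\bx)-\widehat\mu^{\eps'}(\bx)|
\leq |\widehat\mu|_{W^{1,\infty}(Y)}\,|\bx|\,\Big|\frac1\eps-\frac1{\eps'}\Big|
\leq |\widehat\mu|_{W^{1,\infty}(Y)}\,\hD\,\frac{|\eps-\eps'|}{\eps\eps'} ,
\end{equation*}
and the same bound holds for $\widehat\MA$ component-wise. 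Outside $D$ both coefficients are unchanged. Thus $\delta(\eps') \eq \|\mu_\eps-\mu_{\eps'}\|_{L^\infty(\Om)}+\|\MA_\eps-\MA_{\eps'}\|_{L^\infty(\Om)}\to 0$ as $\eps'\to\eps$. This uses $\eps>0$ and the boundedness of $D$ essentially.

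\textbf{Step 2: operator perturbation.} For $\phi,v\in H^1(\Om)$, the boundary terms cancel and
\begin{equation*}
|(b_\eps-b_{\eps'})(\phi,v)| \leq k^2\delta\|\phi\|\|v\|+\delta\|\grad\phi\|\|\grad v\|\leq \delta\|\phi\|_{H^1_k(\Om)}\|v\|_{H^1_k(\Om)} ,
\end{equation*}
so $\|B_\eps-B_{\eps'}\|\leq\delta$. Write $B_{\eps'} = B_\eps\big(I - B_\eps^{-1}(B_\eps-B_{\eps'})\big)$. A Neumann series argument then applies: if $C\Cste\delta\leq 1/2$, then $B_{\eps'}^{-1}$ exists with $\|B_{\eps'}^{-1}\|\leq 2C\Cste$, and
\begin{equation*}
\|B_\eps^{-1}-B_{\eps'}^{-1}\| = \|B_{\eps'}^{-1}(B_{\eps'}-B_\eps)B_\eps^{-1}\|\lesssim \Cste^2\delta .
\end{equation*}
This gives the locally uniform control of $C_{\eps'}$ that would otherwise be the main obstacle. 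We do not need any a priori bound on $C_{\eps'}$, since invertibility and the bound for $B_{\eps'}^{-1}$ come directly from the perturbation argument.

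\textbf{Step 3: conclusion.} For $f\in L^2(\Om)$ we have $\|f\|_{H^{-1}_k(\Om)}\leq k^{-1}\|f\|_{L^2(\Om)}$ and $\|w\|_{L^2(\Om)}\leq k^{-1}\|w\|_{H^1_k(\Om)}$. Therefore
\begin{equation*}
k^2\|S_\eps-S_{\eps'}\|_{L^2\to L^2}\leq \|B_\eps^{-1}-B_{\eps'}^{-1}\|\lesssim\Cste^2\,\delta(\eps')\to0 \quad\text{as } \eps'\to\eps .
\end{equation*}
Hence $C_{\eps'}\to\Cste$, which proves the continuity on $(0,\hD]$. The only delicate point is Step 1: the Lipschitz bound degenerates like $1/\eps^2$, which is why continuity holds only at positive $\eps$, and not uniformly as $\eps\to0$.
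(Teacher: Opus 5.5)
Your proposal is correct and follows essentially the paper's argument. Both use the same Lipschitz bound $\hD|\eps-\eps'|/(\eps\eps')$ on the coefficients (from $|\bx|\leq\hD$ on $D$) together with the $H^{-1}_k\to H^1_k$ stability bound~\eqref{eq_bound_eps_H1}, and both arrive at $|\Cste-\Cstep|\lesssim(\Cste)^2\hD|\eps-\eps'|/(\eps\eps')$. The only difference is how the factor from $B_{\eps'}^{-1}$ is handled: you control it with a Neumann series and the resolvent identity, while the paper compares $u_\eps$ and $u_{\eps'}$ directly, symmetrizes, and absorbs $\Cstep$ using its a priori finiteness from Proposition~\ref{proposition_bound_eps}.
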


\begin{proof}
Let $0 < \eps \leq \hD$ and consider $\eps' \in (\eps-\delta,\eps+\delta)$,
with $\delta > 0$ to be chosen sufficiently small. We are going to show that
\begin{equation*}
|\Cste - \Cstep| \lesssim (\Cste)^2\frac{|\eps-\eps'|}{\eps}\frac{\hD}{\eps},
\end{equation*}
which implies the continuity of $\Cste$ as a function of $\eps$.

Fix $f \in L^2(\Om)$ and denote by $u_\eps,u_{\eps'} \in H^1(\Om)$
the solutions to the highly heterogeneous scattering problem in~\eqref{eq_helmholtz_eps_weak},
with $\eps'$ instead of $\eps$ in the second case. Since $D$ is bounded and contains
the origin, we have
\begin{equation*}
\left |
\frac{\bx}{\eps}-\frac{\bx}{\eps'}
\right |
=
|\bx|\frac{|\eps'-\eps|}{\eps\eps'}
\leq
\hD\frac{|\eps'-\eps|}{\eps\eps'},
\end{equation*}
for all $\bx \in D$, which allows us to write that
\begin{align*}
b_\eps(u_\eps-u_{\eps'},v)
&=
b_{\eps'}(u_{\eps'},v)-b_{\eps}(u_{\eps',v})
\\
&=
-k^2 ((\widehat{\mu}^{\eps'}-\widehat{\mu}^{\eps})u_{\eps'},v)_D
+
((\widehat{\MA}^{\eps'}-\widehat{\MA}^{\eps})\grad u_{\eps'},\grad v)_D
\\
&\lesssim
\hD \frac{|\eps-\eps'|}{\eps\eps'}
\left (
|\widehat\mu|_{W^{1,\infty}(Y)}
+
|\widehat\MA|_{\MW^{1,\infty}(Y)}
\right )
\|u_{\eps'}\|_{H^1_k(\Om)}\|v\|_{H^1_k(\Om)}
\\
&\lesssim
\Cstep
\hD \frac{|\eps-\eps'|}{\eps\eps'}
\left (
|\widehat\mu|_{W^{1,\infty}(Y)}
+
|\widehat\MA|_{\MW^{1,\infty}(Y)}
\right )
\frac{\|f\|_{L^2(\Om)}}{k} \|v\|_{H^1_k(\Om)}.
\end{align*}

Using the bound in~\eqref{eq_bound_eps_H1}, it follows that
\begin{equation*}
k^2\|u_{\eps'}\|_{L^2(\Om)}
\leq
k^2\|u_\eps-u_{\eps'}\|_{L^2(\Om)}
+
k^2\|u_{\eps}\|_{L^2(\Om)}
\leq
C \hD \Cste\Cstep \frac{|\eps-\eps'|}{\eps\eps'}\|f\|_{L^2(\Om)}
+
\Cste\|f\|_{L^2(\Om)}
\end{equation*}
uniformly in $f$, from which we conclude that
\begin{equation*}
\Cstep-\Cste
\leq
C \hD \Cste\Cstep \frac{|\eps-\eps'|}{\eps\eps'}.
\end{equation*}
In this last two estimates and below, $C$ is a generic constant
with possible dependencies as described in Section~\ref{section_hidden_constants}.
Since this relation is symmetric, we have
\begin{equation*}
|\Cste-\Cstep|
\leq
C
\hD\Cste\Cstep\frac{|\eps-\eps'|}{\eps\eps'}
\leq
C\hD(\Cste)^2\frac{|\eps-\eps'|}{\eps\eps'}
+
C\hD\Cste\frac{|\eps-\eps'|}{\eps\eps'}|\Cste-\Cstep|,
\end{equation*}
so that
\begin{equation*}
\left (
1 - C\hD\Cste\frac{|\eps-\eps'|}{\eps\eps'}
\right )
|\Cste-\Cstep|
\leq
C\hD(\Cste)^2\frac{|\eps-\eps'|}{\eps\eps'},
\end{equation*}
leading to the desired bound if $\delta$ is sufficiently small This concludes the proof.
\end{proof}

\begin{remark}
In the proof of Lemma~\ref{lemma_continuity_Cste}, we used the
fact that $\widehat{\mu}$ and $\widehat{\MA}$ are Lipschitz
continuous. We believe, however, that this assumption can
be lightened. For instance, the bound
\begin{equation*}
|((\widehat{\MA}^{\eps'}-\widehat{\MA}^{\eps})\grad u_{\eps'},\grad v)_D|
\lesssim
\hD\frac{|\eps-\eps'|}{\eps\eps'}
|\widehat\MA|_{\MW^{1,\infty}(Y)}
|u_{\eps'}|_{H^1(\Om)}|v|_{H^1(\Om)}
\end{equation*}
could be replaced by
\begin{equation*}
|((\widehat{\MA}^{\eps'}-\widehat{\MA}^{\eps})\grad u_{\eps'},\grad v)_D|
\lesssim
\left (\hD\frac{|\eps-\eps'|}{\eps\eps'}\right )^s
|\widehat\MA|_{\MW^{s,\infty}(Y)}
|u_{\eps'}|_{H^{1+s'}(D)}|v|_{H^1(\Om)}
\end{equation*}
for some $s,s' > 0$, with the help of fractional Sobolev spaces.
In this case, we would need to show a bound of the form
\begin{equation*}
k |u_{\eps'}|_{H^{1+s'}(D)}
\lesssim
\Cstep \left (\frac{k}{\eps'}\right )^{-s'} \|f\|_{L^2(\Om)}
\end{equation*}
uniformly for $\eps' \in (\eps-\delta,\eps+\delta)$.
Whereas it feels likely that such bound holds true, actually establishing
it seems to lead to technicalities that we prefer avoiding here.
\end{remark}

\begin{theorem}[]
We have
\begin{equation*}
\Cst \eq \sup_{0 < \eps \leq \hD} \Cste < +\infty.
\end{equation*}
\end{theorem}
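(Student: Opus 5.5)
Fix $k>0$. By Lemma~\ref{lemma_continuity_Cste}, $\eps \mapsto \Cste$ is continuous on $(0,\hD]$, so it is bounded on every compact subinterval $[\eps_0,\hD]$ with $\eps_0>0$. The whole difficulty is therefore the limit $\eps \to 0$. I will argue by contradiction, following~\cite{bouchitte_felbacq_2004a}. Assume there is a sequence $\eps_n \to 0$ with $C_{\eps_n} \to +\infty$. By definition of $C_{\eps_n}$, we can pick right-hand sides $f_n$ with $\|f_n\|_{L^2(\Om)}=1$ whose solutions $u_n$ to~\eqref{eq_helmholtz_eps_weak} (with $\eps=\eps_n$) satisfy $k^2\|u_n\|_{L^2(\Om)} \geq C_{\eps_n}/2 \to \infty$.

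The first step is to normalize. Set $w_n \eq u_n/\|u_n\|_{L^2(\Om)}$ and $g_n \eq f_n/\|u_n\|_{L^2(\Om)}$. Then $\|w_n\|_{L^2(\Om)}=1$ and $g_n \to 0$ strongly in $L^2(\Om)$. Next I need an a priori $H^1$ bound that does not go through $C_{\eps_n}$. To get it, I test the equation against $w_n$. The imaginary part gives $k\|w_n\|^2_{L^2(\partial\Om)} \leq |(g_n,w_n)_\Om| \to 0$. The real part, combined with the uniform ellipticity and boundedness of $\mu_\eps$ and $\MA_\eps$, gives $|w_n|_{H^1(\Om)}^2 \lesssim k^2 + o(1)$. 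Hence $(w_n)$ is bounded in $H^1(\Om)$.

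The second step is compactness and homogenization. Up to a subsequence, $w_n \rightharpoonup w$ weakly in $H^1(\Om)$ and strongly in $L^2(\Om)$ by Rellich, so $\|w\|_{L^2(\Om)}=1$. Also $w_n|_{\partial\Om} \to 0$ in $L^2(\partial\Om)$, so $w=0$ on $\partial\Om$. I then pass to the limit in $b_{\eps_n}(w_n,v)=(g_n,v)_\Om$:
\begin{itemize}
\item the zeroth-order term converges because $\widehat{\mu}^{\eps_n} \rightharpoonup \muH$ weakly-$*$ and $w_n \to w$ strongly in $L^2$;
\item the boundary term vanishes;
\item for the flux term $\MA_{\eps_n}\grad w_n$, I use the standard $H$-convergence (oscillating test function / div-curl) argument with the correctors $\widehat{\bchi}$ from~\eqref{eq_definition_hchi}, in the spirit of~\cite{allaire_1992a,cioranescu_donato_1999a}. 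This identifies the weak limit of the flux as $\MA_0\grad w$ on $\Om$, since outside $D$ the coefficient is $\MI$ and the transmission across $\Gamma$ is handled by working with test functions on all of $\Om$.
\end{itemize}
The conclusion is that $b_0(w,v)=0$ for all $v \in H^1(\Om)$, i.e.\ $w$ solves the homogenized problem~\eqref{eq_helmholtz_homo_weak} with $f=0$. By the uniqueness statement in Proposition~\ref{proposition_bound_homo}, $w=0$, which contradicts $\|w\|_{L^2(\Om)}=1$.

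The main obstacle is the homogenization limit of the flux term for a merely bounded sequence, in a transmission setting where the oscillating coefficients live only in $D$. I plan to handle it through the classical oscillating test function method. Take $v\varphi$ with $\varphi$ supported in $D$ and test against $v + \eps\,\widehat{\bchi}^\eps\cdot\grad v$, using~\eqref{eq_delta_derivatives} and~\eqref{eq_estimate_hchi} to control the correction terms. This identifies the flux locally in $D$. Since the flux converges weakly in $\BL^2(\Om)$ and its limit is identified on $D$, on $\Om\setminus\overline{D}$ and thus almost everywhere, the global limit is determined, because $\Gamma$ has measure zero.

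A secondary subtlety is that $\eps_n \to 0$ forces each $u_n$ to be a genuine solution for a different coefficient. This causes no trouble, since only the uniform ellipticity bounds were used in the first step.
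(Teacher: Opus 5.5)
Your proof is correct and follows essentially the same route as the paper: contradiction, continuity of $\eps \mapsto \Cste$ to force $\eps_n \to 0$, normalization, an $H^1$ bound obtained by testing with $w_n$, compactness, passage to the homogenized limit, and uniqueness for $b_0$. The only difference is the tool for identifying the limit of the flux. You use $H$-convergence with oscillating test functions (div--curl lemma), localized in $D$ and then glued across the null set $\Gamma$, whereas the paper invokes two-scale convergence; both are standard for this step.
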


\begin{proof}
We argue by contradiction. In other words, we assume that for each
$n \in \N$, there exists $f_n \in L^2(\Om)$ with $\|f_n\|_{L^2(\Om)} = 1$
and $\eps_n \in (0,\hD]$ such that
\begin{equation}
\label{eq_contradiction}
k^2 \|u_{\eps_n}\|_{L^2(\Om)} \geq n = n\|f_n\|_{L^2(\Om)},
\end{equation}
where $u_{\eps_n}$ is the solution to~\eqref{eq_helmholtz_eps_weak}
with $\eps = \eps_n$ and right-hand side $f = f_n$.

Let us first observe that we necessarily have $\eps_n \to 0$ as $n \to +\infty$.
Indeed, if $\eps_n \geq \eps_\star > 0$ for all $n$, since $\Cste$ is continuous
on the closed interval $[\eps_\star,\hD]$, it is bounded and we have
\begin{equation*}
k^2\|u_{\eps_n}\|_{L^2(\Om)}
\leq
(\max_{\eps_\star \leq \eps \leq \hD} \Cste)
\|f_n\|_{L^2(\Om)}
=
\max_{\eps_\star \leq \eps \leq \hD} \Cste,
\end{equation*}
which contradicts~\eqref{eq_contradiction}.

We can therefore assume that $\eps_n \to 0$.
We then introduce the rescaled sequences
\begin{equation*}
\widehat{u}_n = \frac{1}{\|u_{\eps_n}\|_{L^2(\Om)}}u_{\eps_n},
\qquad
\widehat{f}_n = \frac{1}{\|u_{\eps_n}\|_{L^2(\Om)}} f_n.
\end{equation*}
We are going to show that a subsequence of $\widehat{u}_n$ converges
to $0$ in $L^2(\Om)$. This concludes the proof, since
$\|\widehat{u}_n\|_{L^2(\Om)} = 1$ for all $n \in \N$ by assumption.

Let us first point out that
\begin{equation}
\label{tmp_norm_f}
\|\widehat{f}_n\|_{L^2(\Om)}
\leq
\frac{\|f_n\|_{L^2(\Om)}}{\|u_{\eps_n}\|_{L^2(\Om)}}
\leq
\frac{k^2}{n}
\end{equation}
so that $\widehat{f}_n \to 0$ in $L^2(\Om)$. Besides, since
\begin{equation*}
\|\widehat{u}_n\|_{H^1_k(\Om)}^2
\lesssim
b(\widehat{u}_n,\widehat{u}_n) + k^2\|\widehat{u}_n\|_{L^2(\Om)}
=
(\widehat{f_n},\widehat{u}_n)_{\Om} + k^2\|\widehat{u}_n\|_{L^2(\Om)},
\end{equation*}
we also have that $\{\widehat{u}_n\}_{n \in \N}$ is bounded in $H^1(\Om)$.

Since $\{\widehat{u}_n\}_{n \in \N}$ is bounded in $H^1(\Om)$,
up to a subsequence that we still denote $\widehat{u}_n$,
it converges to some $u^0$ weakly in $H^1(\Om)$ and strongly in $L^2(\Om)$,
and $\grad \widehat{u}_n$ two-scale converges to
$\grad u^0 + \grad_y u^1$, for some $u^1 \in L^2(\Omega,H^1_\sharp(Y))$,
see~\cite[Proposition 1.14]{allaire_1992a}.
We are going to show that $u^0 = 0$, which concludes the proof.
Indeed, we should have $\|u^0\|_{L^2(\Om)} = \|\widehat{u}_n\|_{L^2(\Om)} = 1$
due to the strong convergence in $L^2(\Om)$.
Employing arguments based on the notion of two-scale
convergence~\cite{allaire_1992a,bouchitte_felbacq_2004a},
we can show that for any $v \in H^1(\Om)$
\begin{equation}
\label{tmp_two_scale}
\lim_{n \to +\infty} b_{\eps_n}(\widehat u_n,v) = b_0(u^0,v).
\end{equation}
More precisely, we exploit the two-scale convergence of $\grad \widehat{u}_n$
to show that $(u^0,u^1)$ solves a so-called two-scale problem. We can
then express $u^1$ in terms of $u^0$, leading to~\eqref{tmp_two_scale}
after averaging over the periodic cell $Y$. Since on the other hand,
\begin{equation*}
b_{\eps_n}(\widehat u_n,v) = (f_n,v)_\Om \to 0,
\end{equation*}
as $n \to +\infty$, we conclude that
\begin{equation*}
b_0(u^0,v) = 0
\end{equation*}
for all $v \in H^1(\Om)$. This implies that $u^0 = 0$
due to the well-posedness of the homogenized problem,
which finishes the proof.
\end{proof}

\section{Homogenization of the scattering problem}
\label{section_homogenization}

With the uniform bound on $\Cste$ established, we can continue
and provide homogenization error estimates for the scattering
problem.

\begin{theorem}[Frequency-explicit homogenization]
Let $f \in L^2(\Om)$ and consider the solutions
$u_\eps,u_0 \in H^1(\Om)$ of the associated heterogeneous
and homogenized problems in~\eqref{eq_helmholtz_eps_weak}
and~\eqref{eq_helmholtz_homo_weak}. Then, we have
\ifPREPRINT
\begin{multline}
\label{eq_homogenization_error}
k\|u_\eps-u_0-\eps\bchi_\eps \cdot \grad u_0\|_{H^1_k(\Om \setminus \Gamma)}
\lesssim
\Cst (k\eps)^{1/2}
\left (
k \|u\|_{H^1_k(\Om)} + |u|_{H^2(D)}
\right )
\\
\lesssim
\Cst \CstH (k\eps)^{1/2} \|f\|_{L^2(\Om)},
\end{multline}
\else
\begin{equation}
\label{eq_homogenization_error}
k\|u_\eps-u_0-\eps\bchi_\eps \cdot \grad u_0\|_{H^1_k(\Om \setminus \Gamma)}
\lesssim
\Cst (k\eps)^{1/2}
\left (
k \|u\|_{H^1_k(\Om)} + |u|_{H^2(D)}
\right )
\\
\lesssim
\Cst \CstH (k\eps)^{1/2} \|f\|_{L^2(\Om)},
\end{equation}
\fi
where
\begin{equation*}
\bchi_\eps
\eq
\left \{
\begin{array}{ll}
\widehat{\bchi}^\eps & \text{ in } D
\\
\bo & \text{ in } \Om \setminus \overline{D}.
\end{array}
\right .
\end{equation*}
\end{theorem}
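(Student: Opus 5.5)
The plan is the classical first-order corrector argument, with a boundary-layer cut-off, combined with the uniform stability bound $\Cst$ from the previous theorem. Here $u$ in the statement means $u_0$. Set $w_\eps \eq u_0 + \eps\bchi_\eps\cdot\grad u_0$. This function jumps across $\Gamma$, so it does not belong to $H^1(\Om)$, and we cannot insert $u_\eps - w_\eps$ directly into $b_\eps$. We therefore introduce a cut-off $\theta_\eps \in W^{1,\infty}(D)$ with $\theta_\eps = 1$ at distance $\geq \eps$ from $\Gamma$, $\theta_\eps = 0$ on $\Gamma$, and $\|\grad\theta_\eps\|_{L^\infty}\lesssim \eps^{-1}$. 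We work instead with
\begin{equation*}
\widetilde w_\eps \eq u_0 + \eps\theta_\eps \bchi_\eps\cdot\grad u_0 \in H^1(\Om).
\end{equation*}

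\textbf{Step 1 (cost of the cut-off).} Estimate the difference $w_\eps - \widetilde w_\eps = \eps(1-\theta_\eps)\bchi_\eps\cdot\grad u_0$ in the broken $H^1_k$ norm. It is supported in the layer $D_\eps$ of width $\eps$ near $\Gamma$. We use~\eqref{eq_estimate_hchi} and~\eqref{eq_delta_derivatives}, under which $\grad(\eps \bchi_\eps) = \MC^\eps$ is bounded, together with the trace-type layer estimate
\begin{equation*}
\|\phi\|_{L^2(D_\eps)} \lesssim \eps^{1/2}\|\phi\|_{L^2(D)}^{1/2}\|\phi\|_{H^1(D)}^{1/2}.
\end{equation*}
Applied to $\phi=\grad u_0$, this gives a bound of the form $k\,\|w_\eps-\widetilde w_\eps\|_{H^1_k(\Om\setminus\Gamma)} \lesssim (k\eps)^{1/2}\big(k\|u_0\|_{H^1_k(\Om)}+|u_0|_{H^2(D)}\big)$, after balancing the powers of $k$ with a Young inequality. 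The term $\eps(1-\theta_\eps)\bchi_\eps\cdot\grad^2u_0$ is even smaller.

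\textbf{Step 2 (residual).} Compute $b_\eps(u_\eps-\widetilde w_\eps,v) = (f,v)_\Om - b_\eps(\widetilde w_\eps,v) = b_0(u_0,v)-b_\eps(\widetilde w_\eps,v)$ for $v\in H^1(\Om)$. Outside $D$ the coefficients agree. Inside $D$ we expand $\MA_\eps\grad\widetilde w_\eps$ and obtain three groups of terms:
\begin{itemize}
\item The term $(\MB^\eps-\MAH)\grad u_0$. Here $\widehat\MB-\MAH$ has zero mean and, by the cell problem~\eqref{eq_definition_hchi}, is divergence-free in $\by$ row-wise. It can therefore be written as $\div_{\by}$ of a bounded skew-symmetric flux corrector $\widehat{\boldsymbol{\sigma}}$. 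This yields an $\eps\,\grad\cdot(\boldsymbol\sigma^\eps\grad u_0)$ structure, to be handled as in Step 3.
\item The term $k^2((\mu_\eps-\muH)u_0,v)_D$. We write $\widehat\mu-\muH=\Delta_{\by}\widehat\eta$ and integrate by parts, obtaining $\eps$ times derivatives of $u_0$ and $v$, hence a contribution $\lesssim k\eps\,k\|u_0\|_{H^1_k}\|v\|_{H^1_k}$ plus a boundary contribution on $\Gamma$ of size $\eps$.
\item Terms involving $\grad\theta_\eps$ and $\eps\bchi_\eps\grad^2u_0$, which are bounded as in Step 1.
\end{itemize}

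\textbf{Step 3 (the main obstacle).} The skew-symmetric term produces, after integration by parts, a contribution on $\Gamma$ that is not small. This is exactly why the cut-off is needed: we insert $\theta_\eps$ in front of $\boldsymbol\sigma^\eps$ and pay only a layer term of size $(k\eps)^{1/2}$ via the same layer estimate. Overall we aim for
\begin{equation*}
|b_\eps(u_\eps-\widetilde w_\eps,v)|\lesssim (k\eps)^{1/2}\big(k\|u_0\|_{H^1_k}+|u_0|_{H^2(D)}\big)k^{-1}\|v\|_{H^1_k(\Om)},
\end{equation*}
which is a $H^{-1}_k$ bound on the residual. I expect this bookkeeping of $k$-weights, with the $(k\eps)^{1/2}$ layer loss and $k\eps\lesssim(k\eps)^{1/2}$, to be the delicate part.

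\textbf{Step 4 (conclusion).} Apply~\eqref{eq_bound_eps_H1} with $\Cste\le\Cst$ to bound $\|u_\eps-\widetilde w_\eps\|_{H^1_k(\Om)}$ by $\Cst$ times the residual bound of Step 3. Combining with Step 1 through the triangle inequality gives the first inequality of the statement. The second inequality follows from~\eqref{eq_bound_homo_H2}.
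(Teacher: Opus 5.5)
\textbf{Verdict.} Your argument is correct once the sign of the corrector is fixed (see the last paragraph). It also has the same skeleton the paper relies on: you bound the residual of the first-order two-scale expansion in the dual of $H^1_k(\Om)$ and invert it once with \eqref{eq_bound_eps_H1} and $\Cste \leq \Cst$, which is why $\Cst$ enters only linearly.

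\textbf{Where the routes differ.} The real difference is how the interface $\Gamma$ is handled. The paper does not redo the computation; it defers to the proof of \cite[Theorem~1.12]{chaumont-frelet_scattering_2021}. The interface treatment there, mirrored in Lemma~\ref{lemma_local_homo_H1} for the element problems, works as follows:
\begin{itemize}
\item the flux corrector is expressed through the vector potential of Proposition~\ref{lemma_vector_potential};
\item the interface term $\eps\langle(\grad\times\bq^\eps)\cdot\bn,v\rangle_\Gamma$ is kept rather than removed by a cut-off;
\item that term is bounded by an $H^{-1/2}$ estimate of the oscillating normal trace, of size $(1+(k\eps)^{-1})^{1/2}$, which is where $(k\eps)^{1/2}$ comes from.
\end{itemize}
Your $\theta_\eps$ does two jobs at once. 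It makes $\widetilde w_\eps$ conforming, so the jump of $\eps\bchi_\eps\cdot\grad u_0$ across $\Gamma$ needs no separate treatment. It also annihilates the boundary term of the skew-symmetric corrector. The only price is the layer inequality, which you should use in the scale-correct form $\|\phi\|_{L^2(D_\eps)}^2\lesssim\eps\|\phi\|_{L^2(D)}\bigl(k\|\phi\|_{L^2(D)}+|\phi|_{H^1(D)}\bigr)$, obtained using $k\hD\gtrsim1$. Your route is self-contained and needs only a bounded flux corrector and a multiplicative trace inequality. The paper's route saves work by recycling existing estimates, but it depends on the more delicate trace bound for $\grad\times\bq^\eps\cdot\bn$.

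\textbf{Corrections.} First, the sign. With the paper's definitions, $\grad_{\by}\cdot\bigl(\widehat{\MA}(\boldsymbol e_\ell-\grad_{\by}\widehat{\chi}_\ell)\bigr)=0$. So it is the \emph{columns}, not the rows, of $\widehat{\MB}=\widehat{\MA}(\MI-\widehat{\MC})$ that are divergence-free. Moreover, $\MA_\eps\grad(u_0+\eps\bchi_\eps\cdot\grad u_0)$ has leading part $\widehat{\MA}^\eps(\MI+\widehat{\MC}^\eps)\grad u_0$, not $\widehat{\MB}^\eps\grad u_0$. Its cell average differs from $\MAH$ by $2\mean{\widehat{\MA}\widehat{\MC}}$, whose entries are $2\mean{\widehat{\MA}\grad_{\by}\widehat{\chi}_i\cdot\grad_{\by}\widehat{\chi}_\ell}$. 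This is a Gram matrix and is nonzero in general, so with your sign the residual would be $O(1)$. Your Step~2 expansion is valid for $u_0-\eps\theta_\eps\bchi_\eps\cdot\grad u_0$. The statement itself carries this inconsistency, so fix the convention explicitly.

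Second, a $k$-bookkeeping slip. The volume part of the $\widehat\mu$ term is bounded by $\eps\,k\|u_0\|_{H^1_k(\Om)}\|v\|_{H^1_k(\Om)}$, not by $k\eps\,k\|u_0\|_{H^1_k(\Om)}\|v\|_{H^1_k(\Om)}$. Only the former fits your Step~3 target.
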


\begin{proof}
The second estimate in~\eqref{eq_homogenization_error} is an immediate consequence
of~\eqref{eq_bound_homo_H2}.

For the first estimate, we follow the proof given
in~\cite{chaumont-frelet_scattering_2021}.
In particular, we need to show that~\cite[Eq.~(4.26)]{chaumont-frelet_scattering_2021}
holds true for some
\begin{equation*}
\LM
\lesssim
\Cst \eps \left ( k\|u_0\|_{H^1_k(\Om)} + |u_0|_{H^2(D)} \right ).
\end{equation*}
We can do this following the original proof of~\cite{chaumont-frelet_scattering_2021}.
We first observe that the following variant of the estimate~\cite[Eq.~(4.28)]{chaumont-frelet_scattering_2021} holds true
\begin{equation*}
\|W_\varepsilon\|_{H^1_k(\Omega)} \lesssim \Cst \|\psi\|_{(H^1_k(\Omega))'}.
\end{equation*}
The next steps, namely~\cite[Lemmas 4.9 and 4.10]{chaumont-frelet_scattering_2021}
apply without modifications. We can then follow the proof
of~\cite[Lemma 4.7]{chaumont-frelet_scattering_2021}, where we observe
that all the occurrences of $\|u_0\|_{H^2(B_R)}$ can actually be replaced
by $|u_0|_{H^2(D)}$. Finally, we can conclude following the proof
of~\cite[Theorem 1.12]{chaumont-frelet_scattering_2021} where we can
replace any occurrence of $C_{\rm layer}(kR,kR_0,\widehat{n}_{\rm min})$
by $\Cst$.
\end{proof}

A direct consequence of the homogenization error estimate
in~\eqref{eq_homogenization_error} is the following comparison
of the stability constants.

\begin{corollary}
We have
\begin{equation}
\CstH \lesssim \Cst \lesssim
\left (
1
+
\Cst (k\eps)^{1/2}
\right )
\CstH.
\end{equation}
\end{corollary}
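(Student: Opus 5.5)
Both inequalities come from comparing $u_\eps$ with $u_0$ through the homogenization estimate~\eqref{eq_homogenization_error}, for a fixed right-hand side, and then taking suprema over $\|f\|_{L^2(\Om)}=1$.

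\textbf{Upper bound.} Fix $f$ with $\|f\|_{L^2(\Om)}=1$ and $\eps \in (0,\hD]$. We write $u_\eps = u_0 + \eps\bchi_\eps\cdot\grad u_0 + (u_\eps - u_0 - \eps\bchi_\eps\cdot\grad u_0)$. Then we bound each term in $k^2\|\cdot\|_{L^2(\Om)}$.
\begin{itemize}
\item The first term gives $k^2\|u_0\|_{L^2(\Om)}\le \CstH$ by~\eqref{eq_bound_homo_L2}.
\item For the corrector, \eqref{eq_estimate_hchi} and~\eqref{eq_bound_homo_H2} give
\begin{equation*}
k^2\eps\|\bchi_\eps\cdot\grad u_0\|_{L^2(\Om)} \lesssim (k\eps)\, k\|u_0\|_{H^1_k(\Om)} \lesssim (k\eps)\CstH \lesssim (k\eps)^{1/2}\CstH,
\end{equation*}
where the last step uses the convention that $k\eps\lesssim(k\eps)^{1/2}$. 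Since $\Cst\gtrsim 1$ (from $\Cste\gtrsim 1$), this is $\lesssim \Cst(k\eps)^{1/2}\CstH$.
\item The remainder is controlled by~\eqref{eq_homogenization_error}, since $k^2\|\cdot\|_{L^2} \le k\|\cdot\|_{H^1_k(\Om\setminus\Gamma)}$. It is therefore $\lesssim \Cst\CstH(k\eps)^{1/2}$.
\end{itemize}
Summing and taking the supremum over $f$ gives $\Cste \lesssim (1+\Cst(k\eps)^{1/2})\CstH$.

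The statement is written with $\Cst$ on the left, so I interpret it as a bound on $\Cste$ at the given $\eps$. Alternatively, it can be read as a bound holding for the $\eps$ under consideration. Taking the supremum over $\eps$ would only give a trivial statement, because then $\eps$ ranges up to $\hD$.

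\textbf{Lower bound.} Conversely, $\CstH \lesssim \Cst$ follows from the same decomposition with the roles reversed, and this is the more delicate step. Directly it only gives
\begin{equation*}
k^2\|u_0\|_{L^2(\Om)} \le \Cste + C\Cst\CstH(k\eps)^{1/2} + C(k\eps)\CstH .
\end{equation*}
The $\CstH$ terms on the right can be absorbed only when $\Cst(k\eps)^{1/2}$ is small, so a fixed $\eps$ is not enough. I would instead use that $\Cst$ is a supremum over all $\eps\in(0,\hD]$, while the constant on the left, $\CstH$, does not depend on $\eps$.

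Fix $f$ with $\|f\|_{L^2(\Om)}=1$ and let $\eps\to0$ with $k$ fixed. By~\eqref{eq_homogenization_error} and the corrector bound, $u_\eps\to u_0$ in $L^2(\Om)$. Hence
\begin{equation*}
k^2\|u_0\|_{L^2(\Om)} = \lim_{\eps\to 0} k^2\|u_\eps\|_{L^2(\Om)} \le \limsup_{\eps\to0}\Cste \le \Cst .
\end{equation*}
Taking the supremum over $f$ yields $\CstH\le\Cst$, with constant one.

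The main obstacle is the lower bound, specifically recognising that it has to go through the limit $\eps\to0$ rather than a fixed-$\eps$ comparison. Everything else is routine bookkeeping with~\eqref{eq_estimate_hchi}, \eqref{eq_bound_homo_H2}, and the conventions on hidden constants ($k\eps\lesssim(k\eps)^{1/2}$, $\Cst\gtrsim1$).
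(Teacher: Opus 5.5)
Your proof is correct and is exactly the ``direct consequence'' of~\eqref{eq_homogenization_error} that the paper asserts without writing out. The triangle inequality together with the corrector bound gives $\Cste \lesssim (1+\Cst (k\eps)^{1/2})\CstH$, and letting $\eps \to 0$ at fixed $k$ gives $\CstH \leq \Cst$; this limit is legitimate because $\Cst < +\infty$. Your reading of the middle constant as $\Cste$ in the upper bound is the right one, since with the supremum $\Cst$ there the upper bound would not follow from~\eqref{eq_homogenization_error} for small $\eps$.
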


\section{The multiscale hybrid-mixed method}
\label{section_mhm}

In this section, we now describe the discretization method
we employ for the highly heterogeneous scattering problem
in~\eqref{eq_helmholtz_eps_weak}. Throughout this section,
we fix a polynomial degree $\ell \geq 0$.

\subsection{Curved mesh}
\label{section_curved_mesh}

We assume that $\Omega$ is covered by a (curved) simplicial mesh
$\CT_H$ satisfying standard smoothness assumptions, as stated,
e.g., in~\cite[Definition A.1]{chaumontfrelet_henning_2026a}.
We assume that for every (closed) element $K \in \CT_H$,
either $K \subset \overline{D}$ or $K \subset \overline{\Om} \setminus D$,
and we denote by $\CT_H^D \subset \CT_H$ the set of elements inside $D$.

The results below will be uniform in $\CK > 0$,
whenever (a) the mesh is $\ell$-regular
in the sense of~\cite[Eq. (55)]{chaumontfrelet_henning_2026a}
with constant $M_{\mathcal T_h}(\ell) \leq \CK$, and (b)
for all $K \in \CT_H^D$ and all $v \in H^1(K)$ such that
$\grad v \in \BH_0(\ddiv,K)$, we have $v \in H^2(K)$ with
\begin{equation}
\label{eq_assumption_mesh_H2}
|v|_{H^2(K)} \leq \CK \|\Delta v\|_{L^2(K)}.
\end{equation}
In other words, we demand that the element-wise Neumann problems for the
Laplacian admit a uniform ``$L^2(\Om)$ to $H^2(\Om)$'' regularity shift
on top of the standard mesh regularity assumption.

\begin{remark}
When the elements in the mesh are convex,~\eqref{eq_assumption_mesh_H2}
holds true with constant one, see~\cite[Chapter 3]{grisvard_1985a}.
This is for instance the case when straight simplices are employed.
For curved elements, the situation
is a bit more subtle. We first observe that if $D$ is itself convex,
the mesh can be designed in such a way that every $K \in \CT_H^D$
is convex, hence satisfying the assumption. If $D$ is not convex then it
cannot be meshed with convex elements. In this case, at least when $d=2$,
we can still get the regularity shift in~\eqref{eq_assumption_mesh_H2} under
the alternative assumption that all the edges of the triangles meet
at angle strictly less that $\pi$, which can also be reasonably
achieved in practice. We refer the reader to~\cite[Chapter 5.2]{grisvard_1985a}
for these regularity results on curved polygons. The authors expect that similar
comments might apply when $d=3$, although they are not aware of any rigorous proof.
\end{remark}

The diameter of each $K \in \CT_H$ is denoted by $H_K$, and we let
\begin{equation*}
H_{\rm max} \eq \max_{K \in \CT_H} H_K, \qquad H_{\rm min} \eq \min_{K \in \CT_H} H_K.
\end{equation*}
We also denote by $\bn_K$ the outward unit normal to $\partial K$,
and the set of faces of $\CT_H$ is denoted by $\CF_H$. Finally, we fix a reference
simplex $\widehat{K}$, and for each $K \in \CT_H$, we denote by
$\CF_K: \widehat{K} \to K$ the bijective map from $\widehat{K}$ to $K$
in~\cite[Definition A.1]{chaumontfrelet_henning_2026a}.
$\mathbb{J}_K$ is then the Jacobian matrix of $\CF_K$.

\begin{remark}
Our setting here assumes an exact meshing of the domain,
which might not be doable in practice. In contrast, isoparametric
elements (with polynomial maps $\CF_K$) are typically employed,
leading to a variational crime. In this case, it should be possible
to extend the present analysis by including a Strange-type lemma,
as is done in~\cite{chaumontfrelet_spence_2025a}. However, we avoid
these complications here.
\end{remark}
	
\subsection{Hybrid formulation}

The MHM method hinges on a hybrid formulation where the continuity
of the primal variable is relaxed. We thus consider the broken Sobolev space
\begin{equation*}
H^1(\CT_H)
\eq
\{ v \in L^2(\Om)\,:\, v|_K \in H^1(K)\ \forall K \in \CT_H \}.
\end{equation*}
In what follows, we extend the definition of the
inner products and sequilinear forms employed for $H^1(\Om)$
functions to functions in $H^1(\CT_H)$, whereby
the usual gradient is replaced by its element-wise
counterpart. Note that in particular, the sesquilinear
forms $b_\eps$ and $b_0$ can be naturally extended,
since functions of $H^1(\CT_H)$ admit $L^2(\partial \Om)$
traces. We, however, employ the notation $\|{\cdot}\|_{H^1_k(\CT_H)}$
instead of $\|{\cdot}\|_{H^1_k(\Om)}$ for function in $H^1(\CT_H)$
where the element-wise gradient is involved. We also use similar
notation for high-order Sobolev spaces. We also employ the same
notations with subscript $\CT_H^D$, where the support of integration
is restricted to $D$.

The continuity of the solution is enforced by the following
space of Lagrange multipliers
\begin{equation*}
H^{-1/2}(\partial \CT_H)
\eq
\left \{
\mu \in H^{-1/2}(\partial K)
\; | \;
\exists \bq \in \BH_0(\ddiv,\Omega);
\;
\mu|_{\partial K} = \bq \cdot \bn_K \quad \forall K \in \CT_H
\right \},
\end{equation*}
through the duality pairing 
\begin{equation*}
\langle \mu, v \rangle_{\partial \CT_H}
\eq
\sum_{K \in \CT_H} \langle \mu,v \rangle_{H^{-1/2}(\partial K),H^{1/2}(\partial K)}
\qquad
\forall (\mu,v) \in H^{-1/2}(\partial \CT_H) \times H^1(\CT_H).
\end{equation*}
We also introduce the dual norm
\begin{equation*}
\|\mu\|_{H^{-1/2}_k(\partial \CT_H)}
\eq
\max_{\substack{v \in H^1(\CT_H) \\ \|v\|_{H^1_k(\CT_H)} = 1}}
\langle \mu, v \rangle
\end{equation*}
for all $\mu \in H^{-1/2}(\partial \CT_H)$.

The primal hybrid formulation consists in finding a couple
$(\lambda_\eps,u_\eps) \in H^{-1/2}(\partial \CT_H) \times H^1(\CT_H)$ such that
\begin{equation}
\label{eq_hybrid_formulation}
\left\lbrace
\begin{array}{rcll}
b_{\eps}(u_\eps,v)
+
\ds\left\langle \lambda_\eps,v\right\rangle
&=&
(f,v)_{\Om}
&
\;\; \forall v \in H^1(\CT_H),
\\
\ds\left\langle \mu,u_\eps\right\rangle
&=&
0
&
\;\; \forall \mu \in H^{-1/2}(\partial \CT_H).
\end{array}
\right .
\end{equation}
This problem has a unique solution, and in fact, $u_\eps$
is the original solution to the highly heterogeneous
problem in~\eqref{eq_helmholtz_eps_weak}, whereas
$\lambda_\eps|_{\partial K} = (\MA_\eps\grad u_\eps) \cdot \bn_K$
for all $K \in \CT_H$. Similar comments naturally apply
to the homogenized problem. We refer the reader
to~\cite{chaumont-frelet_multiscale_2020,raviart_thomas_1977a}
for a proof of these facts.

\subsection{The MHM method}
\label{sec_mhm_formulation}

In the MHM formulation, we substitute $u_\eps$ for $\lambda_\eps$ in order to obtain
a problem set on the skeleton $\partial \CT_H$ of the mesh involving only
$\lambda_\eps$ as unknown. This is done by defining two operators
$T_\eps: H^{-1/2}(\partial \CT_H) \to H^1(\CT_H)$ and $\hT: L^2(\Omega) \to H^1(\CT_H)$
by requiring that
\begin{equation}
\label{eq_local_problems}
b_\eps(T_\eps \mu,v) = \langle \mu, v \rangle,
\qquad
b_\eps(\hT \phi,v) = (f,v)_\Om,
\end{equation}
for all $\mu \in H^{-1/2}(\partial \CT_H)$, $\phi \in L^2(\Om)$ and $v \in H^1(\CT_H)$.
These operators are actually defined locally, and evaluating them
amounts to solving element-wise Helmholtz problems with Neumann boundary
conditions. We also similarly define operators $T_0$ and $\hTo$,
with $b_\eps$ replaced by $b_0$.
These definitions are actually sound for sufficiently fine meshes.
This is established in~\cite{chaumont-frelet_multiscale_2020}, which we state below.

\begin{proposition}[Well-posedness of the local problems]
\label{proposition_well_poesdness}
There exists a constant $\tau$ solely depending on the shape-regularity
parameters of the mesh and the extremal values of $\widehat{\mu}$ and $\widehat{\MA}$
such that if $kH_{\max} \leq \tau$, then the operators $T_\eps$, $\hT$, $T_0$ and $\hTo$
are well-defined.In addition, we have
\begin{equation}
\label{eq_bound_local}
\|T_\eps \mu\|_{H^1_k(\Om)}
+
\|T_0 \mu\|_{H^1_k(\Om)}
\lesssim
\|\mu\|_{H^{-1/2}_k(\partial \CT_H)},
\qquad
\|\hT \phi\|_{H^1_k(\Om)}
+
\|\hTo \phi\|_{H^1_k(\Om)}
\lesssim
\frac{1}{k}\|\phi\|_{L^2(\Om)}
\end{equation}
for all $\mu \in H^{-1/2}(\partial \CT_H)$ and $\phi \in L^2(\Om)$.
\end{proposition}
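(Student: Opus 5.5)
The plan is to treat each local Neumann problem on a single element $K \in \CT_H$ as a coercive problem once $kH_{\max}$ is small, and then sum over elements. Since $H^1(\CT_H)$ is a broken space, the sesquilinear forms $b_\eps$ and $b_0$ decouple into element contributions. For $K$ not touching $\partial\Om$, the contribution is
\begin{equation*}
b_{\eps,K}(\phi,v) = -k^2(\mu_\eps\phi,v)_K + (\MA_\eps\grad\phi,\grad v)_K ,
\end{equation*}
with the additional term $-ik(\phi,v)_{\partial K\cap\partial\Om}$ for boundary elements. The definitions in~\eqref{eq_local_problems} therefore amount to independent problems on each $K$. By Lax--Milgram, or Fredholm theory plus uniqueness, it suffices to show that each $b_{\eps,K}$ satisfies a Gårding-type coercivity on $H^1(K)$ that becomes genuine coercivity when $kH_K \le \tau$. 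The key point is that the constant must be uniform in $\eps$, so only the extremal values $\mu_{\max}$, $\Amin$ enter, never derivatives of the oscillating coefficients.

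The central step is a Poincaré-type splitting. Write $v = \overline{v}_K + \tilde v$, where $\overline{v}_K$ is the mean of $v$ on $K$. By the Poincaré inequality on (curved, shape-regular) elements, which is uniform through the mapping $\CF_K$, we have $\|\tilde v\|_{L^2(K)} \lesssim H_K |v|_{H^1(K)}$. The real part of $b_{\eps,K}(v,v)$ is
\begin{equation*}
\Amin|v|^2_{H^1(K)} - k^2\mu_{\max}\|v\|^2_{L^2(K)} .
\end{equation*}
This is \emph{not} coercive because of the constant mode, since $k^2\|\overline v_K\|^2$ is not controlled by the gradient. So I would not aim for plain coercivity. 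Instead I would use the standard MHM remedy from~\cite{chaumont-frelet_multiscale_2020}: the local problems are posed modulo constants, or equivalently one proves an inf-sup condition with a test function obtained by flipping the sign on the constant mode. Concretely, take the test function $v^\star = -\overline{v}_K + \tilde v$. Then
\begin{equation*}
\operatorname{Re} b_{\eps,K}(v,v^\star) = k^2(\mu_\eps\overline v_K,\overline v_K)_K - k^2(\mu_\eps\tilde v,\tilde v)_K + (\MA_\eps\grad v,\grad v)_K + \text{cross terms}.
\end{equation*}
The cross terms satisfy $k^2|(\mu_\eps \overline v_K,\tilde v)_K| \lesssim k^2\|\overline v_K\|\,H_K|v|_{H^1(K)}$. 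This is absorbed by Young's inequality when $kH_K$ is small, because $k^2 \cdot H_K = k\,(kH_K)$ and the product splits into $k^2\|\overline v_K\|^2$ and $|v|_{H^1}^2$, each with a small factor. Similarly, $k^2\|\tilde v\|^2 \lesssim (kH_K)^2|v|^2_{H^1(K)}$ is absorbed in the gradient term. The outcome is $|b_{\eps,K}(v,v^\star)| \gtrsim \|v\|^2_{H^1_k(K)}$ with $\|v^\star\|_{H^1_k(K)} \lesssim \|v\|_{H^1_k(K)}$, which gives an inf-sup constant of order one. The boundary term $-ik\|v\|^2_{\partial K\cap\partial\Om}$ contributes only to the imaginary part. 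I would handle it by taking real parts and, if needed, bounding its cross contribution by the trace inequality $k\|\tilde v\|^2_{\partial K} \lesssim kH_K|v|^2_{H^1(K)}$.

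With the local inf-sup in hand, which holds symmetrically for the adjoint, the Banach--Nečas--Babuška theorem yields well-posedness of each local problem. The stability bounds follow by duality. For $T_\eps\mu$, one has $\|T_\eps\mu\|^2_{H^1_k} \lesssim b_\eps(T_\eps\mu,v^\star) = \langle\mu,v^\star\rangle \le \|\mu\|_{H^{-1/2}_k}\|v^\star\|_{H^1_k}$, after summing over $K$. For $\hT\phi$, the same argument gives $(\phi,v^\star)_\Om \le \|\phi\|\,\|v^\star\| \le k^{-1}\|\phi\|\,\|v^\star\|_{H^1_k}$. The homogenized operators are identical, with $\muH,\MAH$ having extremal bounds controlled by those of $\widehat\mu,\widehat\MA$.

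The main obstacle I expect is uniformity of the Poincaré and trace constants on curved elements, which relies on the mesh assumptions through $\CF_K$. Beyond that, the only care needed is the sign-flip test function together with tracking that no $\eps$-dependence enters.
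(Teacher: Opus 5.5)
\textbf{Verdict: genuine but repairable gap.} Your strategy is the mechanism behind the inf-sup condition the paper imports from its reference: decouple into element-wise problems, split $v=\overline v_K+\tilde v$, test with $v^\star=-\overline v_K+\tilde v$, and let the Poincar\'e inequality with $kH_K\le\tau$ do the work. For elements not touching $\partial\Om$ your computation is correct; the volume cross terms are even purely imaginary, since $\mu_\eps$ is real.

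The gap is the impedance term on elements for which $\Sigma_K\eq\partial K\cap\partial\Om$ has positive measure. That term contributes only to the imaginary part of $b_{\eps,K}(v,v)$, but not of $b_{\eps,K}(v,v^\star)$. One finds
\[
\Re\bigl(-ik(v,v^\star)_{\Sigma_K}\bigr)=2k\operatorname{Im}\,(\overline v_K,\tilde v)_{\Sigma_K}.
\]
Your trace bound $k\|\tilde v\|^2_{L^2(\Sigma_K)}\lesssim kH_K|v|^2_{H^1(K)}$ controls only one factor. For the constant mode, $k\|\overline v_K\|^2_{L^2(\Sigma_K)}\sim(kH_K)^{-1}k^2\|\overline v_K\|^2_{L^2(K)}$. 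The powers of $kH_K$ therefore cancel, and the term is only bounded by $C\,k\|\overline v_K\|_{L^2(K)}|v|_{H^1(K)}$ with an $O(1)$ shape-dependent $C$. It is of the same order as the coercive part, and no choice of $\tau$ absorbs it.

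This is not merely a lossy estimate. Near $\partial\Om$ we have $\mu_\eps=1$ and $\MA_\eps=\MI$, so nonnegativity of $\Re b_{\eps,K}(v,v^\star)$ for small $kH_K$ requires $|(1,\tilde v)_{\Sigma_K}|^2\le|K|\,|\tilde v|^2_{H^1(K)}$ for all mean-free $\tilde v$. This fails for some strongly obtuse but shape-regular triangles, where the real part becomes negative for suitable $v$ however small $kH_K$ is.

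\textbf{Repair.} Also exploit $\operatorname{Im} b_{\eps,K}(v,v)=-k\|v\|^2_{L^2(\Sigma_K)}$.
\begin{itemize}
\item Testing with $v^\star-2iv$ adds $2k\|v\|^2_{L^2(\Sigma_K)}$ to the real part, and $\|v^\star-2iv\|_{H^1_k(K)}\le3\|v\|_{H^1_k(K)}$.
\item Since $k\|\overline v_K\|^2_{L^2(\Sigma_K)}\le2k\|v\|^2_{L^2(\Sigma_K)}+2k\|\tilde v\|^2_{L^2(\Sigma_K)}$, Young's inequality bounds the cross term by $2k\|v\|^2_{L^2(\Sigma_K)}+3k\|\tilde v\|^2_{L^2(\Sigma_K)}$.
\item The first part is cancelled, and the second is $\lesssim kH_K|v|^2_{H^1(K)}$, hence absorbable for $kH_K\le\tau$.
\end{itemize}
Alternatively, on such elements subtract the mean over $\Sigma_K$ rather than over $K$. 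This kills the boundary cross term exactly while preserving the Poincar\'e inequality.

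Your duality argument for the bounds on $T_\eps$, $\hT$, $T_0$, $\hTo$ must then use this modified test function, with the sign of the $2iv$ shift reversed for the adjoint form. With that change the rest of your plan goes through.
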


A direct consequence is that we can indeed eliminate the unknown $u_\eps$
and reformulate~\eqref{eq_hybrid_formulation} in terms of $\lambda_\eps$. This
process can be though of as a Schur complement, and results in the following
skeletal variational formulation.  Assuming that $kH_{\max} \leq \tau$,
for all $f \in L^2(\Om)$, there exists a unique
$\lambda_\eps\in H^{-1/2}(\partial \CT_H)$ such that
\begin{equation}
\label{eq_mhm_eps}
\langle \mu,T_\eps\lambda_\eps\rangle_{\partial\CT_H}
= 
-\langle \mu,\hT f\rangle_{\partial\CT_H}    
\quad \forall \mu \in H^{-1/2}(\partial \CT_H).
\end{equation}
This $\lambda_\eps$ corresponds to the one in~\eqref{eq_hybrid_formulation},
and we have $u_\eps = T_\eps \lambda_\eps + \hT f$
for the solution of the original scattering problem.

Similarly, for the homogenized problem,
there exists a unique $\lambda_0 \in H^{-1/2}(\partial \CT_H)$ such that
\begin{equation}
\label{eq_mhm_homo}
\langle \mu,T_0\lambda_0\rangle_{\partial\CT_H}
= 
-\langle \mu,\hTo f\rangle_{\partial\CT_H}    
\quad \forall \mu \in H^{-1/2}(\partial \CT_H),
\end{equation}
and we have $u_0 = T_0 \lambda_0 + \hTo f$.

We are now ready to present the MHM method, which amounts
to selecting a discretization subspace $\Lambda_H \subset H^{-1/2}(\partial \CT_H)$.
Here, we focus on the space spanned by the normal traces of Raviart--Thomas elements.
Namely,
\begin{equation*}
\Lambda_H
\eq
\left \{
\mu_H \in H^{-1/2}(\partial \CT_H)
\; | \;
\exists \bq_H \in \BQ_h;
\quad
\mu_H|_{\partial K} = \bq_H \cdot \bn_K \quad \forall K \in \CT_H
\right \},
\end{equation*}
with
\begin{equation*}
\BQ_H
\eq
\left \{
\bq_H \in \BH_0(\ddiv,\Om)
\; | \;
((\operatorname{det}\mathbb{J}_K)^{-1}\mathbb{J}_K \bq_H)|_K \circ \CF_K^{-1} \in \BCP_\ell(\widehat{K}) + \bx \CP_\ell(\widehat{K})
\right \}.
\end{equation*}
The elements of $\Lambda_H$ are (mapped) polynomials single-valued on each face
$F \in \CF_H$, without compatibility condition across edge and vertices.
We refer the reader to~\cite[Chapters 9.2 and 14]{ern_guermond_2021a} for more a
extensive discussion of Raviart--Thomas elements and the associated Piola mapping
employed in the definition of $\BQ_H$.

The discrete problem then consists in finding $\lambda_H \in \Lambda_H$ such that
\begin{equation}
\label{eq_mhm_discrete}
\langle \mu_H, T_\eps\lambda_H\rangle_{\partial \CT_H}
=
-\langle \mu_H, \hT f\rangle_{\partial \CT_H}
\quad
\forall
\mu_H \in \Lambda_H.
\end{equation}
Once $\lambda_H$ is computed as the solution to \eqref{eq_mhm_discrete},
an approximation to $u_\eps$ is obtained by setting
\begin{equation}
\label{one_level_solution}
u_H \eq \hT f + T_\eps\lambda_H.
\end{equation}

\begin{remark}
Our definition of the MHM method in~\eqref{eq_mhm_discrete}
implies that we are able to evaluate the operators $T_\eps$
and $\hT$. This corresponds to solving (well-posed) local
Helmholtz problems in each mesh element, which in general
cannot be done analytically. In practice, a second-level
discretization scheme (by primal or mixed finite elements)
is therefore employed independently within each element
to approximately evaluate these operators. It leads to
a two-level method where the second level mesh size $h$
is typically chosen such that $h \sim \eps$. In this work,
we do not analyse the effect of the second-level discretization.
However, we refer the reader to~\cite{barrenechea2020multiscale}
for some results in this direction in the case of coercive PDEs.
\end{remark}

\section{Frequency-explicit convergence analysis}
\label{section_mhm_discretization}

\subsection{Abstract convergence theory}

We first provide an abstract stability and convergence analysis
for \eqref{eq_mhm_discrete}. Following the Schatz argument~\cite{schatz1974observation},
one can show that the method is quasi-optimal if the mesh is sufficiently
refined. The key quantity in this analysis is the so-called
the approximation factor. Specifically, we set
\begin{equation}
\label{eq_Capp}
\Capp \eq k
\max_{\substack{f \in L^2(\Om) \\ \|f\|_{L^2(\Om)} = 1}}
\min_{\mu_H \in \Lambda_H}
\|\lambda_\eps - \mu_H\|_{H^{-1/2}_k(\partial \CT_H)},
\end{equation}
where, for each $f \in L^2(\Om)$, $\lambda_\eps \in H^{-1/2}(\partial \CT_H)$
is the corresponding solution to~\eqref{eq_mhm_eps}.
The following result can be found for the MHM method
in~\cite{chaumont-frelet_multiscale_2020}.

\begin{proposition}[Asymptotic quasi-optimality]
\label{proposition_quasi_optimality}
There exists a constant $\LC^\star$ independent of $\eps$, $k$ and $H$
such that, if $\Capp \leq \LC^\star$, then there exists a unique
$\lambda_H \in \Lambda_H $ solution to \eqref{eq_mhm_discrete},
and we have
\begin{equation}
\label{error_inf}
\|\lambda_\eps - \lambda_H\|_{H^{-1/2}_k(\partial \CT_H)} 
\lesssim 
\min_{\mu_H \in \Lambda_H}
\|\lambda_\eps - \mu_H\|_{H^{-1/2}_k(\partial \CT_H)}.
\end{equation}
\end{proposition}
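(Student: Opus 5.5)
The plan is to run the Schatz duality argument on the skeletal formulation, with $\Capp$ playing the role of the adjoint approximability quantity. Set
\begin{equation*}
a(\lambda,\mu) \eq \langle \mu, T_\eps \lambda\rangle_{\partial\CT_H}
\end{equation*}
(sesquilinear on $H^{-1/2}(\partial\CT_H)$). The first step is a G\r{a}rding-type inequality for $a$. For $\lambda$ given, set $w \eq T_\eps\lambda$. Testing the defining relation~\eqref{eq_local_problems} with $v = w$ gives $\overline{\langle \lambda, w\rangle} = \overline{b_\eps(w,w)}$, so
\begin{equation*}
\operatorname{Re}\, a(\lambda,\lambda) + 2k^2\mu_{\max}\|w\|^2_{L^2(\Om)} \gtrsim \|w\|^2_{H^1_k(\CT_H)}.
\end{equation*}
On the other hand, $\|\lambda\|_{H^{-1/2}_k(\partial\CT_H)} = \sup_v |b_\eps(w,v)|/\|v\|_{H^1_k(\CT_H)} \lesssim \|w\|_{H^1_k(\CT_H)}$. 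Together with~\eqref{eq_bound_local}, this shows that $\|T_\eps\lambda\|_{H^1_k(\CT_H)}$ is an equivalent norm on $H^{-1/2}(\partial\CT_H)$. It also shows that $a$ is bounded and coercive up to the compact-type perturbation $k^2\|T_\eps\lambda\|_{L^2}^2$.

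The second step is the duality (Aubin--Nitsche) step. Let $e \eq \lambda_\eps-\lambda_H$. By Galerkin orthogonality, $a(e,\mu_H) = 0$ for all $\mu_H\in\Lambda_H$. We need to control $k\|T_\eps e\|_{L^2(\Om)}$. Note that $T_\eps e = u_\eps - u_H$ is conforming up to jumps, because $\langle\mu,u_\eps\rangle = 0$ while $u_H$ only satisfies the discrete constraints. I would therefore pose an adjoint problem with right-hand side $g \eq k^2\,T_\eps e$ for the sesquilinear form $b_\eps$; its solution in skeletal form is some $\xi\in H^{-1/2}(\partial\CT_H)$. The structure here is symmetric: $b_\eps$ is symmetric except for the impedance term, and the adjoint corresponds to the conjugate problem with $-ik$ replaced by $+ik$. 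Since this conjugate problem has the same stability constant, the definition of $\Capp$ applies to it, giving $\min_{\mu_H}\|\xi-\mu_H\|\le k^{-1}\Capp\|g\|_{L^2}$. Using orthogonality to subtract $\mu_H$, then boundedness of $a$, yields
\begin{equation*}
k^2\|T_\eps e\|^2_{L^2}\lesssim \Capp\,\|e\|_{H^{-1/2}_k}\,k\|T_\eps e\|_{L^2},
\end{equation*}
that is, $k\|T_\eps e\|_{L^2}\lesssim \Capp\|e\|_{H^{-1/2}_k}$.

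The third step inserts this into the G\r{a}rding inequality. For any $\mu_H\in\Lambda_H$,
\begin{equation*}
\|e\|^2 \lesssim |a(e,e)| + \Capp^2\|e\|^2 = |a(e,\lambda_\eps-\mu_H)| + \Capp^2\|e\|^2 \lesssim \|e\|\,\|\lambda_\eps-\mu_H\| + \Capp^2\|e\|^2.
\end{equation*}
Choosing $\LC^\star$ small absorbs the last term and gives~\eqref{error_inf}. Uniqueness follows by taking $f=0$, and existence follows from uniqueness since the system is finite-dimensional.

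The main obstacle is step two. One must verify carefully that the adjoint of the skeletal operator corresponds to a skeletal problem whose solution $\xi$ is covered by the definition of $\Capp$. This may require replacing $\Capp$ by its adjoint analogue and checking that the two coincide via complex conjugation. One must also check that the right-hand side built from $T_\eps e$ is an admissible $L^2$ datum, which it is since $T_\eps e\in H^1(\CT_H)\subset L^2(\Om)$. All constants must stay independent of $\eps$, which holds because only~\eqref{eq_bound_local} and the coefficient bounds enter.
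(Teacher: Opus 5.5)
Your proposal is correct and follows essentially the same route as the paper, which does not prove this proposition itself but quotes it from its MHM reference as an instance of the Schatz duality argument. Your G\r{a}rding inequality for $\langle \mu, T_\eps\lambda\rangle_{\partial\CT_H}$, the duality bound $k\|T_\eps e\|_{L^2(\Om)} \lesssim \Capp \|e\|_{H^{-1/2}_k(\partial\CT_H)}$ and the final absorption are exactly that argument, and the adjoint issue you flag is settled by $b_\eps(v,z) = b_\eps(\bar z,\bar v)$: up to sign, the adjoint multiplier is the complex conjugate of the primal multiplier for data $\bar g$, $\Lambda_H$ and the dual norm are conjugation invariant, and $\langle e, z\rangle_{\partial\CT_H} = 0$ for the conforming adjoint solution $z \in H^1(\Om)$. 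The one step you justify too quickly is $\|\mu\|_{H^{-1/2}_k(\partial\CT_H)} \lesssim \|T_\eps\mu\|_{H^1_k(\CT_H)}$, since boundedness of $b_\eps$ on broken functions is not uniform in $kH$ because of the impedance term $-ik(\cdot,\cdot)_{\partial\Om}$; the paper does not derive it that way either but imports it from the same reference (its Corollary 3.4), so your argument rests on the same footing as the paper's.
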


\subsection{Approximability estimates for the homogenized problem}

Employing the idea of regularity
splitting~\cite{chaumont-frelet_wavenumber_2020,melenk_wave_2011},
the following approximability estimate is derived
in~\cite{chaumont-frelet_multiscale_2020} for the homogenized problem.
			
\begin{proposition}[Approximability for the homogenized problem]
\label{lemma_interpolation}
For all $f \in L^2(\Om)$, there exists $\Pi_H\lambda_0 \in \Lambda_H$ such that
\begin{equation}
\label{eq_interpolation_RT}
k \|\lambda_0 - \Pi_H \lambda_0\|_{H^{-1/2}_k(\partial \CT_H)}
\lesssim 
\left ( kH + \CstH (kH)^{(\ell+1)} \right )  \|f\|_{L^2(\Om)},
\end{equation}
where $\lambda_0 \in H^{-1/2}(\partial \CT_H)$ is the solution
to the skeletal formulation of the homogenized problem in~\eqref{eq_mhm_homo}.
\end{proposition}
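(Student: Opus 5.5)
The plan is a regularity splitting of $u_0$ in the spirit of~\cite{melenk_wave_2011,chaumont-frelet_wavenumber_2020}. Combined with the identity $\lambda_0|_{\partial K} = (\MA_0\grad u_0)\cdot\bn_K$, it reduces the approximation of $\lambda_0$ to approximating fluxes $\MA_0\grad w$ by elements of $\BQ_H$. We define $\Pi_H$ as the (Piola-mapped) Raviart--Thomas interpolant of degree $\ell$, acting on a suitable lift $\bq$ of $\lambda_0$. The natural lift is $\bq = \MA_0 \grad u_0$. It lies in $\BH(\ddiv,\Om)$ with $\div \bq = -f - k^2\mu_0 u_0$, and its normal trace on $\partial\Om$ is handled by the Robin condition through a harmless modification. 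We set $\Pi_H\lambda_0 \eq (\mathcal I_H^{RT}\bq)\cdot\bn_K$ and estimate the dual norm in $H^{-1/2}_k(\partial\CT_H)$. For $v \in H^1(\CT_H)$, the element-wise Green formula gives
\[
\langle \lambda_0-\Pi_H\lambda_0, v\rangle = \sum_K (\bq - \mathcal I_H^{RT}\bq, \grad v)_K + (\div(\bq-\mathcal I_H^{RT}\bq), v)_K .
\]
Consequently $\|\lambda_0 - \Pi_H\lambda_0\|_{H^{-1/2}_k} \lesssim \|\bq-\mathcal I_H^{RT}\bq\|_{\BL^2} + k^{-1}\|\div(\bq-\mathcal I_H^{RT}\bq)\|_{L^2}$, and it remains to bound these two terms.

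\textbf{Step 1 (splitting).} Write $u_0 = u_{\mathscr H^2} + u_{\mathscr A}$. Here $u_{\mathscr H^2}$ is obtained by applying a high-frequency filter to the data, i.e.\ solving a coercive problem (Helmholtz with $+k^2$ or with a shifted frequency cut-off). It satisfies the $k$-independent bound $k\|u_{\mathscr H^2}\|_{H^1_k} + |u_{\mathscr H^2}|_{H^2(\CT_H)} \lesssim \|f\|_{L^2}$, using the piecewise $H^2$ regularity of transmission problems (smooth $\Gamma$, piecewise constant coefficients). The remainder $u_{\mathscr A}$ solves a Helmholtz problem with low-frequency data and is piecewise analytic. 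Its derivatives obey $|u_{\mathscr A}|_{H^{p}(D \text{ or } \Om\setminus\overline D)} \lesssim C^p p!\, k^{p-2}\CstH\|f\|_{L^2}$, which follows from~\eqref{eq_bound_homo_L2}--\eqref{eq_bound_homo_H2} and the analytic regularity theory for constant-coefficient elliptic transmission problems.

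\textbf{Step 2 (interpolation).} For the $H^2$ part, lowest-order RT approximation on curved elements gives $\|\bq_{\mathscr H^2}-\mathcal I_H^{RT}\bq_{\mathscr H^2}\|\lesssim H|u_{\mathscr H^2}|_{H^2(\CT_H)}\lesssim H\|f\|$. The divergence term is controlled by the $L^2$-projection property of $\div\circ\mathcal I_H^{RT}$, giving $k^{-1}\|\div\|\lesssim k^{-1}\|f\|$. Hence this part contributes $kH\|f\|$ after multiplication by $k$; the crude $\|f\|$ from the divergence term is absorbed by using instead that only the mean-free part on each $K$ enters, via a Poincar\'e inequality on $v$, which gives $H\|f\|$. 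For the analytic part, degree-$\ell$ RT approximation on $\ell$-regular curved meshes (\cite{chaumontfrelet_henning_2026a}) yields $\|\bq_{\mathscr A}-\mathcal I_H^{RT}\bq_{\mathscr A}\|\lesssim H^{\ell+1}|u_{\mathscr A}|_{H^{\ell+2}(\CT_H)}\lesssim H^{\ell+1}k^{\ell}\CstH\|f\|$. After multiplying by $k$, this is $\CstH(kH)^{\ell+1}\|f\|$, and the divergence term is handled similarly.

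The main obstacle is Step 1: obtaining the analytic-part bounds with exactly one factor $\CstH$ across the interface $\Gamma$ and the Robin boundary, together with handling the mapped RT interpolation on curved elements. In practice I would invoke the existing splitting of~\cite{chaumont-frelet_wavenumber_2020} for piecewise constant coefficients, as the paper does via~\cite{chaumont-frelet_multiscale_2020}, rather than reprove it.
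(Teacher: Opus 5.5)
Your proposal is correct and follows the same route as the paper. The paper gives no self-contained argument: it attributes the estimate to \cite{chaumont-frelet_multiscale_2020}, where it is obtained by a regularity splitting of $u_0$ in the spirit of \cite{chaumont-frelet_wavenumber_2020,melenk_wave_2011}. As far as I can tell from that work, the splitting is combined with Raviart--Thomas interpolation of the flux $\MA_0\grad u_0$ and a Poincar\'e argument on the divergence term, essentially as you describe.

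Two details deserve to be made explicit. First, elements of $\Lambda_H$ are normal traces of $\BH_0(\ddiv,\Om)$ fields, so your ``harmless modification'' amounts to zeroing the boundary degrees of freedom of the interpolant. This leaves an extra term of the form $k(u_0-\pi_F u_0,v-\pi_F v)_{\partial\Om}$, where $\pi_F$ is the face-wise projection onto mapped degree-$\ell$ polynomials. The same splitting bounds it by $(kH+\CstH(kH)^{\ell+1})k^{-1}\|f\|_{L^2(\Om)}\|v\|_{H^1_k(\CT_H)}$. Second, since $\ell$ is fixed, you only need finitely many derivatives of the smooth part (up to order $\ell+3$). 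Smooth rather than analytic $\Gamma$ and $\partial\Om$ therefore suffice, which is all the paper assumes.
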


\subsection{Local homogenization results}

We also have the following local homogenization results.

\begin{lemma}[Local homogenization]
\label{lemma_local_homogenization}
Assume that $kH_{\max} \leq \tau$. Then for all $f \in L^2(\Om)$, we have
\begin{equation}
\label{eq_homogenization_hT}
k\|\hT f-\hTo f - \eps \bchi_\eps \cdot \grad (\hTo f)\|_{H^1_k(\CT_H)}
\lesssim
\left (
(k\eps)^{1/2}
+
\left (\frac{\eps}{H_{\min}}\right )^{1/2}
+
\frac{\eps}{H_{\min}}
\right )\|f\|_{L^2(\Om)}
\end{equation}
and
\begin{equation}
\label{eq_homogenization_T}
k
\|
T_\eps \lambda_0-T_0 \lambda_0 - \eps \bchi_\eps \cdot \grad (T_0 \lambda_0)
\|_{H^1_k(\CT_H)}
\lesssim
\CstH
\left (
(k\eps)^{1/2}
+
\left (\frac{\eps}{H_{\min}}\right )^{1/2}
+
\frac{\eps}{H_{\min}}
\right )\|f\|_{L^2(\Om)}.
\end{equation}
\end{lemma}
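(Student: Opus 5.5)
Both estimates are element-wise statements, since $T_\eps,\hT,T_0,\hTo$ act independently on each $K\in\CT_H$. On elements $K \subset \overline{\Om}\setminus D$ the coefficients coincide with their homogenized values and $\bchi_\eps = \bo$, so all differences vanish identically. It therefore suffices to prove, for each $K\in\CT_H^D$, a local homogenization estimate for the Neumann problem
\begin{equation*}
-k^2\widehat{\mu}^\eps w_\eps - \div(\widehat{\MA}^\eps\grad w_\eps) = g \ \text{in } K,
\qquad
\widehat{\MA}^\eps\grad w_\eps\cdot\bn_K = \mu \ \text{on } \partial K,
\end{equation*}
with its homogenized counterpart $w_0$, and then sum over $K$. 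This local estimate is what Appendix~\ref{appendix_homogenization} is meant to provide.

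The local argument is the classical first-order corrector estimate with a boundary-layer cut-off. Set $r_\eps \eq w_\eps - w_0 - \eps\widehat{\bchi}^\eps\cdot\grad w_0$ on $K$. Using~\eqref{eq_delta_derivatives}, the cell problem~\eqref{eq_definition_hchi} and the bound~\eqref{eq_estimate_hchi}, one computes $b_\eps^K(r_\eps,v)$. This produces interior terms of size $\eps(k\|w_0\|_{H^1_k(K)}+|w_0|_{H^2(K)})$, after introducing the usual flux corrector (a skew-symmetric potential for $\widehat{\MB}-\MAH$) to handle the $O(1)$ oscillating flux, and a boundary term $\langle(\widehat{\MB}^\eps-\MAH)\grad w_0\cdot\bn_K, v\rangle_{\partial K}$.

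The boundary term is the source of the $(\eps/H_{\min})^{1/2}$ and $\eps/H_{\min}$ contributions. I would treat it by replacing the corrector with $\eps\eta_\eps\widehat{\bchi}^\eps\cdot\grad w_0$, where $\eta_\eps$ is a cut-off vanishing in an $\eps$-neighbourhood of $\partial K$ with $|\grad\eta_\eps|\lesssim\eps^{-1}$. The layer then costs $\|\grad w_0\|_{L^2(K_\eps)}+\eps|w_0|_{H^2(K_\eps)}$ on a strip $K_\eps$ of width $\eps$. A trace-type inequality gives $\|\grad w_0\|_{L^2(K_\eps)}^2\lesssim \eps\|\grad w_0\|_{L^2(K)}(H_K^{-1}\|\grad w_0\|_{L^2(K)}+|w_0|_{H^2(K)})$, which yields the factors $(\eps/H)^{1/2}$ and $(k\eps)^{1/2}$. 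Removing the cut-off afterwards costs $\eps\|\grad(\widehat{\bchi}^\eps\cdot\grad w_0)\|_{L^2(K_\eps)}$, which is of the same size; the extra $\eps/H_{\min}$ comes from the scaled lower-order parts. Local well-posedness for $kH_{\max}\leq\tau$ (Proposition~\ref{proposition_well_poesdness}), i.e.\ a $k$-independent local stability for $b_\eps$ on $K$, then converts the residual bound into the $H^1_k(K)$ bound, with no $\Cst$ appearing.

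It remains to bound $k\|w_0\|_{H^1_k(K)}+|w_0|_{H^2(K)}$. For $w_0=\hTo f$ we have $\mu=0$, so assumption~\eqref{eq_assumption_mesh_H2} (applied after an affine change of variables making $\MAH$ the identity) gives $|w_0|_{H^2(K)}\lesssim\|f\|+k^2\|w_0\|$, and~\eqref{eq_bound_local} gives $k\|w_0\|_{H^1_k}\lesssim\|f\|$; this produces~\eqref{eq_homogenization_hT}. For $w_0=T_0\lambda_0$, the main obstacle is that $\lambda_0$ is merely in $H^{-1/2}$. I would avoid estimating $\lambda_0$ directly and write $T_0\lambda_0 = u_0 - \hTo f$ on each $K$. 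The $H^2(K)$ and $H^1_k$ norms of $u_0$ are controlled by~\eqref{eq_bound_homo_H2} by $\CstH\|f\|$, those of $\hTo f$ by the previous step, and $\CstH\gtrsim1$ absorbs the latter. Summing the squared local estimates over $K$ gives~\eqref{eq_homogenization_T}.
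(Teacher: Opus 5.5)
Your reduction is exactly the paper's proof of this lemma. The differences vanish on elements outside $D$, and the local estimate of Theorem~\ref{theorem_local_homo} is applied on each $K\in\CT_H^D$ and summed. Then $|\hTo f|_{H^2(K)}$ is bounded through \eqref{eq_assumption_mesh_H2} and \eqref{eq_bound_local}, and $T_0\lambda_0=u_0-\hTo f$ is handled with \eqref{eq_bound_homo_H2} and $\CstH\gtrsim1$. Your affine change of variables is slightly more careful than the paper, which writes $\grad(\hTo f)\cdot\bn=0$ although the natural condition is $\MAH\grad(\hTo f)\cdot\bn_K=0$. Strictly, you then need the regularity shift on $\MAH^{-1/2}K$ rather than on $K$, which is harmless for convex elements.

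Where you genuinely differ is in proving the local estimate itself. The paper keeps the full corrector $\eps\widehat{\bchi}^\eps\cdot\grad w_0$ up to $\partial K$ and writes the flux mismatch through a vector potential $\bq$. After integrating by parts, it controls the boundary term $\eps\langle\grad\times\bq^\eps\cdot\bn_K,v\rangle_{\partial K}$ with the $H^{-1/2}(\partial K)$ bound of Proposition~\ref{lemma_vector_potential}, imported and adapted from earlier work. That bound is what yields $(k\eps)^{1/2}+(\eps/H_K)^{1/2}$.

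Your cut-off $\eta_\eps$ bypasses this delicate normal-trace estimate. With a bounded periodic flux corrector and $\eta_\eps=0$ on $\partial K$, no boundary term survives. (Incidentally, once the flux corrector is used, the boundary term would not be $\langle(\widehat{\MB}^\eps-\MAH)\grad w_0\cdot\bn_K,v\rangle_{\partial K}$, but this is moot.) Everything reduces to $\|\grad w_0\|_{L^2(K_\eps)}+\eps|w_0|_{H^2(K)}$, the strip inequality gives the same rates, and the case $\eps\gtrsim H_K$ is trivial. Your route is more self-contained and elementary, at the cost of a cut-off-removal step. The paper's route needs no removal step but rests on an external, technically heavy result.

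One ingredient is missing from your sketch. The zeroth-order term $k^2((\widehat{\mu}^\eps-\muH)w_0,v)_K$ is $O(1)$ as it stands, and neither $\widehat{\bchi}$ nor the flux corrector touches it. So your claim that all interior terms are of size $\eps(k\|w_0\|_{H^1_k(K)}+|w_0|_{H^2(K)})$ is not justified.

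You need a second cell function, such as $\heta$ from Lemma~\ref{lemma_heta}, to write $\widehat{\mu}^\eps-\muH=-\eps\div(\widehat{\MA}\grad_{\by}\heta)^\eps$ and integrate by parts. This produces a boundary term, which the paper estimates via \eqref{eq_trace}; in your framework it becomes the strip term $k^2\|w_0\|_{L^2(K_\eps)}\|v\|_{L^2(K_\eps)}$. Either way it is bounded by $(k\eps+\eps/H_K)\|w_0\|_{H^1_k(K)}\|v\|_{H^1_k(K)}$. This, rather than the cut-off removal, is where the $\eps/H_{\min}$ term of the statement comes from (Lemma~\ref{lemma_local_homo_L2}).
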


\begin{proof}
We first observe that the integrand in the left-hand sides
of~\eqref{eq_homogenization_hT} and~\eqref{eq_homogenization_T}
vanishes on all $K \in \CT_H$ such
that $K \subset \overline{\Om} \setminus D$. Therefore, we
only need to focus on the remaining elements $K \in \CT_H^D$.

We thus fix $K \in \CT_H^D$. Since $\grad(\hTo f) \cdot \bn = 0$ on $\partial K$,
the assumption we made on the mesh in~\eqref{eq_assumption_mesh_H2} ensures that
\begin{equation}
\label{tmp_hTf_H2}
|\hTo f|_{H^2(K)}
\lesssim
\|\Delta (\hTo f)\|_{L^2(K)}
\lesssim
k^2 \|\hTo f\|_{L^2(K)} + \|f\|_{L^2(K)}
\lesssim
\|f\|_{L^2(K)},
\end{equation}
where we used the estimate in~\eqref{eq_bound_local} in the last step.
Then~\eqref{eq_homogenization_hT} directly follows
the homogenization estimate in Theorem~\ref{theorem_local_homo}
below, after summing over the elements.

Similarly, the estimate in~\eqref{eq_homogenization_T},
follows from Theorem~\ref{theorem_local_homo}
if we can show that $w_0 \eq T_0\lambda_0 \in H^2(\CT_H^D)$
with $|w_0|_{H^2(\CT_H^D)} \lesssim \CstH\|f\|_{L^2(\Om)}$. To do so,
we observe that $w_0 = u_0-\hTo f$, where $u_0 \in H^2(\Om \setminus \Gamma)$
is the homogenized solution from~\eqref{eq_helmholtz_homo_weak}. It then follows that
\begin{equation*}
|w_0|_{H^2(\CT_H^D)}
\leq
|u_0|_{H^2(D)}
+
\|\hTo f\|_{H^2(\CT_H^D)}
\lesssim
\CstH \|f\|_{L^2(\Om)},
\end{equation*}
where we employed~\eqref{tmp_hTf_H2} above, the stability estimate for $u_0$
in~\eqref{eq_bound_homo_L2}, and the fact that $\CstH \gtrsim 1$.
\end{proof}

\begin{remark}
Here too, we use the assumption that $\widehat{\mu}$ and $\widehat{\MA}$
are Lipschitz functions. Indeed, we base our analysis on homogenization
results from~\cite{chaumont-frelet_scattering_2021} which requires such
regularity, at least when $d=3$. However, this assumption could be lightened
when $d=2$. Besides, the authors also believe that the analysis for $d=3$
could be performed under milder assumptions, provided that sharper arguments
are employed for the homogenization of the local problems.
\end{remark}

\subsection{Frequency-explicit convergence analysis}

We are now ready to establish our approximability estimate for the highly
heterogeneous problem.

\begin{theorem}
\label{theorem_Capp_eps}
If $kH_{\max} \leq \tau$, for all $f \in L^2(\Om)$, we have
\begin{multline}
\label{eq_estimate_mhm_interpolation_ms}
k\|\lambda_\eps-\pi_H\lambda_0\|_{H^{-1/2}_k(\partial \CT_H)}
\lesssim
\\
\left \{
kH_{\max}
+
\CstH
\left (
\left (\frac{\eps}{H_{\min}}\right )^{1/2}
+
\frac{\eps}{H_{\min}}
+
(kH_{\max})^{(\ell+1)}
\right )
+ 
\Cst\CstH (k\eps)^{1/2}
\right \}
\|f\|_{L^2(\Om)},
\end{multline}
where $\lambda_\eps,\lambda_0 \in H^{-1/2}(\partial \CT_H)$ are
to~\eqref{eq_mhm_eps} and~\eqref{eq_mhm_homo}.
In particular,
\begin{equation}
\label{eq_estimate_Capp_ms}
\Capp
\lesssim
kH_{\max}
+
\CstH
\left (
\left (\frac{\eps}{H_{\min}}\right )^{1/2}
+
\frac{\eps}{H_{\min}}
+
(kH_{\max})^{(\ell+1)}
\right )
+ 
\Cst\CstH (k\eps)^{1/2}.
\end{equation}
\end{theorem}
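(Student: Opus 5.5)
The plan is to take $\pi_H\lambda_0 \eq \Pi_H\lambda_0$ from Proposition~\ref{lemma_interpolation} and split the error with the triangle inequality:
\begin{equation*}
k\|\lambda_\eps-\Pi_H\lambda_0\|_{H^{-1/2}_k(\partial \CT_H)}
\leq
k\|\lambda_\eps-\lambda_0\|_{H^{-1/2}_k(\partial \CT_H)}
+
k\|\lambda_0-\Pi_H\lambda_0\|_{H^{-1/2}_k(\partial \CT_H)}.
\end{equation*}
The second term is bounded by $(kH_{\max}+\CstH(kH_{\max})^{\ell+1})\|f\|_{L^2(\Om)}$ by~\eqref{eq_interpolation_RT}. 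Everything then reduces to estimating the multiplier difference $\lambda_\eps-\lambda_0$ in the dual skeleton norm.

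To estimate $\lambda_\eps-\lambda_0$, I will use the first equation of the hybrid formulation~\eqref{eq_hybrid_formulation} for both problems. For any $v\in H^1(\CT_H)$ this gives
\begin{equation*}
\langle \lambda_\eps-\lambda_0,v\rangle
=
b_0(u_0,v)-b_\eps(u_\eps,v).
\end{equation*}
A direct comparison of $b_0$ and $b_\eps$ fails, because $\MA_\eps\grad u_\eps$ and $\MA_0\grad u_0$ only agree weakly. I will therefore insert the local operators. On the $\eps$ side, $u_\eps = T_\eps\lambda_\eps+\hT f$, so $\lambda_\eps$ is the Neumann datum on each element. Define
\begin{equation*}
\widetilde u_\eps \eq T_\eps\lambda_0+\hT f .
\end{equation*}
By~\eqref{eq_local_problems},
\begin{equation*}
\langle \lambda_\eps-\lambda_0, v\rangle = b_\eps(u_\eps-\widetilde u_\eps, v)
\qquad \forall v \in H^1(\CT_H).
\end{equation*}
Taking the supremum over $v$ and using the $H^1_k(\CT_H)$ continuity of $b_\eps$, whose constant is independent of $\eps$ and $k$, I obtain
\begin{equation*}
\|\lambda_\eps-\lambda_0\|_{H^{-1/2}_k(\partial \CT_H)}\lesssim \|u_\eps-\widetilde u_\eps\|_{H^1_k(\CT_H)}.
\end{equation*}

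Next, I split
\begin{equation*}
u_\eps-\widetilde u_\eps
=
\bigl(u_\eps - u_0-\eps\bchi_\eps\cdot\grad u_0\bigr)
+
\bigl(u_0+\eps\bchi_\eps\cdot\grad u_0-\widetilde u_\eps\bigr).
\end{equation*}
For the first piece, the frequency-explicit homogenization theorem~\eqref{eq_homogenization_error} gives the bound $k^{-1}\Cst\CstH(k\eps)^{1/2}\|f\|_{L^2(\Om)}$ in the broken norm, which is fine since $\Gamma$ is a union of faces. For the second piece, I write $u_0=T_0\lambda_0+\hTo f$ and note that $\bchi_\eps\cdot\grad$ is linear. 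Lemma~\ref{lemma_local_homogenization}, with~\eqref{eq_homogenization_hT} and~\eqref{eq_homogenization_T}, then bounds it by
\begin{equation*}
k^{-1}\CstH\left((k\eps)^{1/2}+(\eps/H_{\min})^{1/2}+\eps/H_{\min}\right)\|f\|_{L^2(\Om)},
\end{equation*}
where I used $\CstH\gtrsim1$ to absorb the $\hT$ contribution. Finally, I absorb $\CstH(k\eps)^{1/2}$ into $\Cst\CstH(k\eps)^{1/2}$ using $\Cst\gtrsim1$. Multiplying by $k$ and collecting terms gives~\eqref{eq_estimate_mhm_interpolation_ms}. Estimate~\eqref{eq_estimate_Capp_ms} follows by bounding the minimum in~\eqref{eq_Capp} by the choice $\mu_H=\Pi_H\lambda_0$ and taking the maximum over $\|f\|_{L^2(\Om)}=1$.

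The main obstacle is the identity $\langle\lambda_\eps-\lambda_0,v\rangle=b_\eps(u_\eps-\widetilde u_\eps,v)$ and the use of the broken $H^1_k$ norm. I need to check that $\widetilde u_\eps$, which is discontinuous across faces, is controlled only element-wise, and that this is exactly what the local homogenization estimates provide. No global stability constant should enter through $\widetilde u_\eps$: it only appears through $u_0$, via $\CstH$, and through $u_\eps$, via $\Cst$ in the global homogenization estimate.
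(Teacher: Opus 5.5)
Your proposal follows the paper's proof: the same triangle inequality with $\Pi_H\lambda_0$, the same reduction $\|\lambda_\eps-\lambda_0\|_{H^{-1/2}_k(\partial \CT_H)}\lesssim\|T_\eps(\lambda_\eps-\lambda_0)\|_{H^1_k(\CT_H)}$, and the same three-term split into the global error from~\eqref{eq_homogenization_error} and the two local errors from Lemma~\ref{lemma_local_homogenization}, with $\CstH,\Cst\gtrsim 1$ used to absorb terms. The only difference is that the paper takes that reduction from~\cite[Corollary 3.4]{chaumont-frelet_multiscale_2020}, whereas your derivation via continuity of $b_\eps$ is not uniform as you claim: in the broken norm the impedance term $k|(\phi,v)_{\partial\Om}|$ can only be controlled through~\eqref{eq_trace}, which costs a factor $1+(kH_K)^{-1}$ on elements touching $\partial\Om$, so for that step you should rely on the cited result rather than a bare continuity bound.
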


\begin{proof}
We start with the triangular inequality
\begin{equation*}
\|\lambda_\eps-\pi_H\lambda_0\|_{H^{-1/2}_k(\partial \CT_H)}
\leq
\|\lambda_\eps-\lambda_0\|_{H^{-1/2}_k(\partial \CT_H)}
+
\|\lambda_0-\pi_H\lambda_0\|_{H^{-1/2}_k(\partial \CT_H)}.
\end{equation*}
For the second term, we can readily apply the standard MHM interpolation estimate
in \eqref{eq_interpolation_RT}, leading to
\begin{equation*}
k\|\lambda_0-\pi_H\lambda_0\|_{H^{-1/2}_k(\partial \CT_H)}
\lesssim 
\left( kH + \Cst (kH)^{(\ell+1)}\right)  \|f\|_{0,\Om}.
\end{equation*}
For the first term, using~\cite[Corollary 3.4]{chaumont-frelet_multiscale_2020},
we write that
\begin{align*}
\|\lambda_\eps-\lambda_0\|_{H^{-1/2}_k(\partial \CT_H)}
&\lesssim
\|T_\eps \lambda_\eps-T_\eps\lambda_0\|_{H^1_k(\CT_H)}
\\
&\leq
\|T_\eps\lambda_\eps-T_0\lambda_0-\eps\bchi_\eps \cdot \grad(T_0\lambda_0)\|_{H^1_k(\CT_H)}
+
\|T_\eps\lambda_0-T_0\lambda_0-\eps\bchi_\eps \cdot \grad(T_0\lambda_0)\|_{H^1_k(\CT_H)}
\\
&\leq
\|u_\eps-u_0-\eps\bchi_\eps \cdot \grad u_0\|_{H^1_k(\CT_H)}
+
\|\hT f-\hTo f-\eps\bchi_\eps \cdot \grad(\hTo f)\|_{H^1_k(\CT_H)}
\\
&+
\|T_\eps\lambda_0-T_0\lambda_0-\eps\bchi_\eps \cdot \grad(T_0\lambda_0)\|_{H^1_k(\CT_H)}
\end{align*}
so that~\eqref{eq_estimate_mhm_interpolation_ms} follows
from~\eqref{eq_homogenization_error} and the estimates in
Lemma~\ref{lemma_local_homogenization}. Then,~\eqref{eq_estimate_Capp_ms}
is a direct consequence of the definition of $\Capp$ in \eqref{eq_Capp}.
\end{proof}

Combining Proposition~\ref{proposition_quasi_optimality}
together with Theorem~\ref{theorem_Capp_eps}, we can deliver
our final result. The last estimate in~\eqref{eq_mhm_error_u} follows
from~\cite[Corollary 3.4]{chaumont-frelet_multiscale_2020}.

\begin{corollary}[Error estimate for the MHM solution]
\label{corollary_error_estimate}
There exists $\zeta > 0$, independent of $\eps,k$ and $H_{\max}$
and $H_{\min}$, such that if
\begin{equation*}
\Cst\CstH (k\eps)^{1/2}
+
\CstH \left (
\left (\frac{\eps}{H_{\min}}\right )^{1/2}
+
\frac{\eps}{H_{\min}}
\right )
+
\CstH (kH)^{\ell+1} \leq \zeta,
\end{equation*}
then the MHM discretization of the highly heterogeneous problem in~\eqref{eq_mhm_discrete}
has a unique solution $\lambda_H \in \Lambda_H$, and
\begin{multline}
\label{eq_mhm_error_lambda}
\|\lambda_\eps-\lambda_H\|_{H^{-1/2}_k(\partial \CT_H)}
\lesssim
\min_{\mu_H \in \Lambda_H} \|\lambda-\mu_H\|_{H^{-1/2}_k(\partial \CT_H)}
\\
\lesssim
\left (
kH
+
\CstH (kH)^{\ell+1}
+
\CstH \left (
\left (\frac{\eps}{H_{\min}}\right )^{1/2}
+
\frac{\eps}{H_{\min}}
\right )
+
\Cst\CstH (k\eps)^{1/2}
\right ) \frac{\|f\|_{L^2(\Om)}}{k}.
\end{multline}
Finally, we have
\ifPREPRINT
\begin{multline}
\label{eq_mhm_error_u}
\|u_\eps-u_H\|_{H^1_k(\CT_H)}
\lesssim
\\
\left (
kH
+
\CstH (kH)^{\ell+1}
+
\CstH \left (
\left (\frac{\eps}{H_{\min}}\right )^{1/2}
+
\frac{\eps}{H_{\min}}
\right )
+
\Cst\CstH (k\eps)^{1/2}
\right ) \frac{\|f\|_{L^2(\Om)}}{k}.
\end{multline}
\else
\begin{equation}
\label{eq_mhm_error_u}
\|u_\eps-u_H\|_{H^1_k(\CT_H)}
\lesssim
\\
\left (
kH
+
\CstH (kH)^{\ell+1}
+
\CstH \left (
\left (\frac{\eps}{H_{\min}}\right )^{1/2}
+
\frac{\eps}{H_{\min}}
\right )
+
\Cst\CstH (k\eps)^{1/2}
\right ) \frac{\|f\|_{L^2(\Om)}}{k}.
\end{equation}
\fi
\end{corollary}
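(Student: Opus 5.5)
The plan is to combine Theorem~\ref{theorem_Capp_eps} with Proposition~\ref{proposition_quasi_optimality} and then transfer the resulting bound on the multiplier to the primal variable.

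\textbf{Step 1: choose $\zeta$.} Let $C_1$ denote the hidden constant in~\eqref{eq_estimate_Capp_ms}. The assumption controls every term of~\eqref{eq_estimate_Capp_ms} except $kH_{\max}$. However, Proposition~\ref{proposition_bound_homo} gives $\CstH \gtrsim 1$, so $kH_{\max} \lesssim \CstH (kH_{\max})^{\ell+1}$ as soon as $kH_{\max} \lesssim 1$.

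If needed, I would first reduce $\zeta$ so that $\CstH(kH)^{\ell+1}\leq\zeta$ forces $(kH_{\max})^{\ell+1}\leq \zeta/\CstH \lesssim \zeta$. This gives $kH_{\max}\leq \min(\tau,1)$, so that the local operators of Proposition~\ref{proposition_well_poesdness} are well defined and Theorem~\ref{theorem_Capp_eps} applies.

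We still need $kH_{\max}$ itself to be small enough to meet $\LC^\star$. Since $kH_{\max} \leq \zeta^{1/(\ell+1)}$, we get
\begin{equation*}
\Capp \leq C_1\bigl(\zeta^{1/(\ell+1)}+\zeta\bigr),
\end{equation*}
and we pick $\zeta$ so that this is at most $\LC^\star$. The constant $\zeta$ then depends only on $\ell$ and the allowed constants, as required.

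\textbf{Step 2: the estimate on $\lambda_H$.} With $\Capp\leq\LC^\star$, Proposition~\ref{proposition_quasi_optimality} yields existence and uniqueness of $\lambda_H$ together with the first inequality of~\eqref{eq_mhm_error_lambda}. For the second inequality, I bound the minimum over $\Lambda_H$ by the particular choice $\mu_H = \Pi_H\lambda_0$ and invoke~\eqref{eq_estimate_mhm_interpolation_ms}. Dividing by $k$ gives the stated right-hand side, with $H$ read as $H_{\max}$.

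\textbf{Step 3: the estimate on $u_H$.} From~\eqref{one_level_solution} and $u_\eps = T_\eps\lambda_\eps+\hT f$ we have
\begin{equation*}
u_\eps-u_H = T_\eps(\lambda_\eps-\lambda_H).
\end{equation*}
The bound~\eqref{eq_bound_local} then gives
\begin{equation*}
\|u_\eps-u_H\|_{H^1_k(\CT_H)}\lesssim\|\lambda_\eps-\lambda_H\|_{H^{-1/2}_k(\partial\CT_H)},
\end{equation*}
which is the continuity direction of~\cite[Corollary 3.4]{chaumont-frelet_multiscale_2020}. Combining this with~\eqref{eq_mhm_error_lambda} yields~\eqref{eq_mhm_error_u}.

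The only delicate point is Step 1. There I must ensure that the smallness hypothesis, which omits $kH$, still bounds the $kH_{\max}$ contribution to $\Capp$. This works through $\CstH\gtrsim 1$ and $\ell\geq 0$. If the $kH$ term were an independent obstruction, one would instead simply add $kH\leq\zeta$ to the hypothesis.
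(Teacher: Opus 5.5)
Your proposal is correct and follows the paper's route: Proposition~\ref{proposition_quasi_optimality} applies once $\Capp\leq\LC^\star$, the minimum is bounded using $\mu_H=\Pi_H\lambda_0$ and Theorem~\ref{theorem_Capp_eps}, and the $u$-estimate comes from $u_\eps-u_H=T_\eps(\lambda_\eps-\lambda_H)$ with \eqref{eq_bound_local}, which is the continuity part of the cited Corollary~3.4; your Step~1 usefully makes explicit, via $\CstH\gtrsim 1$ and $kH_{\max}\lesssim\zeta^{1/(\ell+1)}$, why the $kH$ term missing from the hypothesis is harmless, a point the paper leaves implicit. Just delete the sentence claiming $kH_{\max}\lesssim\CstH(kH_{\max})^{\ell+1}$ whenever $kH_{\max}\lesssim 1$: it fails for $\ell\geq 1$ and small $kH_{\max}$ (there $(kH_{\max})^{\ell+1}\leq kH_{\max}$), and your root-taking argument never needs it.
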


\begin{remark}
Our final estimates in Corollary~\ref{corollary_error_estimate}
seem to indicate that the method is not robust when $H_{\min} \lesssim \eps$.
However, in this case, we can actually employ the classical convergence
theory from~\cite{chaumont-frelet_multiscale_2020}, to replace terms
involving $\eps/H_{\min}$ by positive powers of $H_{\max}/\eps$, where
the negative power of $\eps$ comes from evaluating high-order (piecewise)
Sobolev norms the solution $u_\eps$.
\end{remark}

\begin{remark}
The division by $k$ of $\|f\|_{L^2(\Om)}$ in the right-hand sides
of~\eqref{eq_mhm_error_lambda} and~\eqref{eq_mhm_error_u} is natural.
Indeed, for the case of an incident plane wave where
$f = (k^2+\Delta)(\chi e^{ik\bd\cdot\bx})$, we do have $\|f\|_{L^2(\Om)} \sim k/\hD$.
\end{remark}

\section{Numerical examples}
\label{section_numerical_examples}

\subsection{Continuous setting}

We consider the case where $D = (-1,1)^2$ and $\Omega = (-2,2)^2$.
Note that, at least in principle, this violates our theoretical settings
where $D$ and $\Omega$ must be smooth. The periodic patterns are selected
as $\widehat{\mu} = 1$ and
\begin{equation*}
\widehat{\MA}(\bx) \eq 1 + 0.8\cos \left (\frac{2\pi}{\eps} \bx_1 \right ).
\end{equation*}
The right-hand side corresponds to the vertical incoming plane wave
$\xi = e^{ik\bx_2}$, so that we solve the scattering problem
with homogeneous right-hand side $f = 0$ and consider the inhomogeneous
boundary condition
\begin{equation*}
\grad u \cdot \bn - iku = g
\end{equation*}
on $\partial \Omega$, with $g = \grad \xi \cdot \bn - ik\xi$.

We note that the homogenized coefficient is given in this case by
\begin{equation*}
\MAH
\eq
\left (
\begin{array}{cc}
1 & 0
\\
0 & 3/5
\end{array}
\right ).
\end{equation*}
It is noteworthy that a horizontal incoming plane wave
$\xi = e^{ik\bx_1}$ would note see the obstacle $D$ in the limit
when $\varepsilon \to 0$, since the homogenized wave speed is uniformly
equal to one in the horizontal direction. We instead consider the
interesting case of a vertical plane wave for which the obstacle
is not transparent.

We focus on the frequencies $\omega = 2\pi$ and $10\pi$.
Representations of the solutions are given on Figure~\ref{figure_images}.
We also plot the (oscillating) vertical derivatives inside the scatterer
on Figure~\ref{figure_images_diff}

\begin{figure}
\input{figures/figure_images}
\caption{Representation of $\Re u$ in $\Omega$. The scatterer $D$ is represented by dotted lines.}
\label{figure_images}
\end{figure}

\begin{figure}
\input{figures/figure_images_diff}
\caption{Representation of $\Re \partial_2 u$ in $D$.}
\label{figure_images_diff}
\end{figure}

\subsection{Discrete setting}

All our discretizations are based on Cartesian grids.
More specifically, we employ an underlying fine grid
of $n \times n$ squares with $n \eq 8192$. The discretization
space is based on an $N \times N$ grid with $N$ ranging from
$16$ to $1024$. On this coarse grid, we employ the MHM method
described above with degree $\ell = 0$ and $1$. To compute the
basis functions, each cell of the $N \times N$ grid is divided
into $n/N \times n/N$ squares on which we employ Lagrange $\CQ_{\ell+1}$
elements. To assemble the matrix of the second-level finite element method,
we consider one constant value for $\MA_\eps$ per element by selecting the
value at the barycentre.  We note that this leads to the same approximation
of the coefficients for all values of $N$, since the underlying integration
grid for the second level is the same $n \times n$ fine grid.

To compute (approximate) errors, we consider a reference solution computed
as described as above with $N=2048$ and $\ell=1$. For comparison, all numerical
solutions are evaluated on a $2048 \times 2048$ Cartesian grid. More precisely,
we compute relative errors with
\begin{equation*}
\|u-u_H\|_{H^1_k(\Omega)}^2
\sim
\sum_{m,n=1}^{2048}
\left (
k^2|\widetilde u(\bx_{m,n})-u_H(\bx_{m,n})|^2
+
|\grad \widetilde u(\bx_{m,n})-\grad u_H(\bx_{m,n})|^2
\right ),
\end{equation*}
and
\begin{equation*}
\|u\|_{H^1_k(\Omega)}^2
\sim
\sum_{m,n=1}^{2048}
\left (
k^2|\widetilde u(\bx_{m,n})|^2
+
|\grad \widetilde u(\bx_{m,n})|^2
\right ),
\end{equation*}
where $\widetilde u$ is the reference solution and
$\bx_{m,n} = (-2+4(m-0.5)/2048,-2+4(n-0.5)/2048)$.

To highlight the benefit of employing a multiscale method,
we also run compute solutions with the same coarse grid,
but only one $Q_{\ell+1}$ element per coarse element (with
one value for $\MA_\eps$ taken at the barycentre). In this
case, the (two-level) MHM method becomes equivalent to
the primal-hybrid finite element method~\cite{bendali_2025a,raviart_thomas_1977a},
which is not designed to capture multiscale effects
and rely on standard polynomial basis functions.

\subsection{Results}

In all the figures below, we always represent the relative error
computed as described above. The relative error is plotted either
as a function of $N$ for a fixed $\eps$, or as a function of $\eps$
for a fixed $N$. Figures~\ref{figure_legend} summarizes the line styles
used.

\begin{figure}
\input{figures/legend}
\caption{Line styles employed in all the other figures.}
\label{figure_legend}
\end{figure}

Figure~\ref{figure_F1_N} summarizes the results obtained for the lower
frequency $\omega = 2\pi$. We first note that the solid lines are always
below the dashed lines, meaning that there for all the considered values
of $\eps$, there is an interest in employing the multiscale method.
We also see here that the dashed lines are almost the same, meaning
that for the standard method, the limiting factor is really the approximation
of the coefficient, irrespectively of the chosen polynomial degree.
The fact that the multicale method is more performant even for ``large''
$\eps$ was already documented in the past~\cite{chaumont-frelet_multiscale_2020},
and is due to the fact that even if the coefficient is not multiscale,
the multiscale method still employs $k$-oscillating functions rather
than polynomials.

Still on Figure~\ref{figure_F1_N}, focusing now on the solid representing
the mutiscale method for $\ell=0$ and $1$. We see that for the largest
value of $\eps$, we are in the regime where ``$H \lesssim \eps$'', so that
no resonant effect occur. However, the resonant effect is present in all
the other cases, as can be seen by a localized error increase that happens
at larger values of $N$ as $\eps \to 0$. This is can be seen more clearly
on the left panel of Figure~\ref{figure_eps}, especially for the case $\ell=1$.
We see there that the error initially increases with $\eps$, before diminishing
linearly. We note that our theory only predicts a decrease as $\sqrt{\eps}$,
but the linear rate observed here could only be a pre-asymptotic effect.
We also see on Figure~\ref{figure_eps} that the error reaches a plateau
for $\ell = 0$. This is because the curves presented there are for a rather
small value of $N$, so that the error is dominated by terms involving $H$
rather than $\eps$.

We finally note that as can be seen on Figure~\ref{figure_F1_N}, although
the multiscale method with $\ell = 1$ is more performant, the gain is relatively 
small.

Figure~\ref{figure_F5_N} presents the results obtained in the case $\omega=10\pi$.
We can make some comments similar to the one we made for $\omega=2\pi$, namely that
it is always worse employing the multiscale method, even if the resonant effect is
present for the considered values of $H$ and $\eps$. The main difference with the
lower frequency case is that here, the higher-order method with $\ell=1$ is much
more performant that the one with $\ell=0$, in agreement with the analysis above.
In fact, the method with $\ell=0$ barely reaches $10\%$ of accuracy, whereas using
$\ell=1$ delivers a good accuracy in all cases in reasonable meshes. These comments
also apply in the right panel of Figure~\ref{figure_eps}, where we again observe a
linear convergence as $\eps \to 0$ after the resonant effect is past for $\ell=1$,
whereas a plateau is reached for $\ell=0$.

\begin{figure}
\input{figures/figure_F1_N}
\caption{Convergence as $N$ increase for $\omega = 2\pi$.}
\label{figure_F1_N}
\end{figure}

\begin{figure}
\input{figures/figure_F5_N}
\caption{Convergence as $N$ increase for $\omega = 10\pi$.}
\label{figure_F5_N}
\end{figure}

\begin{figure}
\input{figures/figure_eps}
\caption{Convergence as $\eps \to 0$.}
\label{figure_eps}
\end{figure}

\appendix

\section{Homogenization of local problems}
\label{appendix_homogenization}

This section is dedicated to proving local homogenization
results that we used in Lemma~\ref{lemma_local_homogenization} above.
Throughout this appendix, we assume that $kH_{\max} \leq \tau$ as in
Proposition~\ref{proposition_well_poesdness}.

We consider here a fixed $K \in \CT_H^D$, and we define the bilinear form
\begin{equation*}
b_\eps^K(\phi,v)
=
-k^2(\widehat{\mu}^\eps\phi,v)_K
+
(\widehat{\MA}^\eps \grad \phi,\grad v)_K
\end{equation*}
for all $\phi,v \in H^1(K)$, which coincides with $b_\eps$ for arguments
supported in $K$. We will also use
\begin{equation*}
b_0^K(\phi,v)
=
-k^2(\muH\phi,v)_K
+
(\MAH \grad \phi,\grad v)_K.
\end{equation*}

The following inf-sup condition follows from a Poincar\'e
inequality by exploiting the resoltion condition on the mesh.
It is established in~\cite{chaumont-frelet_multiscale_2020}.

\begin{proposition}
\label{proposition_inf_sup}
We have
\begin{equation*}
\inf_{\substack{\phi \in H^1(K) \\ \|\phi\|_{H^1_k(K)} = 1}}
\sup_{\substack{v \in H^1(K) \\ \|v\|_{H^1_k(K)} = 1}}
|b_\eps^K(\phi,v)|
\gtrsim
1.
\end{equation*}
\end{proposition}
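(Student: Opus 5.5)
Since $b_\eps^K$ is a compact perturbation of a coercive form, I would prove the inf-sup bound directly by a Gårding-type argument. The mesh resolution condition $kH_{\max} \leq \tau$ is what makes the negative zeroth-order term harmless, except on constants. For a given $\phi \in H^1(K)$, I would split
\[
\phi = \overline{\phi} + \widetilde{\phi}, \qquad \overline{\phi} \eq \frac{1}{|K|}\int_K \phi,
\]
so that $\widetilde{\phi}$ has zero mean on $K$. The Poincaré inequality on $K$, with constant $\lesssim H_K$ uniformly under the mesh regularity assumptions, gives
\[
k\|\widetilde{\phi}\|_{L^2(K)} \lesssim kH_K \|\grad\phi\|_{\BL^2(K)} \lesssim \tau \|\grad \phi\|_{\BL^2(K)}.
\]
On the fluctuation part the form is therefore coercive, up to a term of size $\tau^2$, while on the mean part it is coercive with a minus sign.

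The test function I would use is $v \eq \widetilde{\phi} - \overline{\phi}$, which satisfies $\|v\|_{H^1_k(K)} = \|\phi\|_{H^1_k(K)}$ up to cross terms. Since $\grad v = \grad \phi$, the gradient part of $\Re\, b_\eps^K(\phi,v)$ equals $(\widehat{\MA}^\eps\grad\phi,\grad\phi)_K \geq \Amin |\phi|_{H^1(K)}^2$. For the mass term I would expand
\[
-k^2(\widehat{\mu}^\eps(\overline{\phi}+\widetilde{\phi}),\widetilde{\phi}-\overline{\phi})_K
=
k^2(\widehat{\mu}^\eps\overline{\phi},\overline{\phi})_K - k^2(\widehat{\mu}^\eps\widetilde{\phi},\widetilde{\phi})_K + \text{cross terms}.
\]
The first term is $\geq \mu_{\min} k^2\|\overline{\phi}\|_{L^2(K)}^2$. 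The second term is bounded in absolute value by $\mu_{\max}\tau^2 |\phi|_{H^1(K)}^2$. The cross terms are $\pm k^2(\widehat{\mu}^\eps\overline{\phi},\widetilde{\phi})_K$; because $\widehat{\mu}^\eps$ is not constant they do not cancel, but they are bounded by
\[
2\mu_{\max} k\|\overline{\phi}\|_{L^2(K)}\, k\|\widetilde{\phi}\|_{L^2(K)}
\lesssim
\tau\, k\|\overline{\phi}\|_{L^2(K)}|\phi|_{H^1(K)},
\]
which Young's inequality absorbs when $\tau$ is small. Together with $k\|\phi\|_{L^2(K)} \leq k\|\overline{\phi}\|_{L^2(K)} + C\tau|\phi|_{H^1(K)}$, this yields
\[
|b_\eps^K(\phi,v)| \gtrsim \|\phi\|_{H^1_k(K)}^2
\qquad\text{and}\qquad
\|v\|_{H^1_k(K)} \lesssim \|\phi\|_{H^1_k(K)},
\]
for $\tau$ small enough depending only on $\Amin$, $\mu_{\min}$, $\mu_{\max}$ and the Poincaré constant. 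These are exactly the dependencies allowed in Proposition~\ref{proposition_well_poesdness}, and the inf-sup bound follows after normalisation.

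The main obstacle is making sure the resulting constant does not depend on $\eps$ or on the oscillations of the coefficients. This is guaranteed because only the pointwise bounds $\Amin$, $\mu_{\min}$, $\mu_{\max}$ enter, and because the Poincaré constant for curved elements is uniform under the $\CK$-regularity assumption, obtained by pulling back to $\widehat{K}$ via $\CF_K$. If one prefers to reuse the literature, it suffices to check that the argument of~\cite{chaumont-frelet_multiscale_2020} for $b_\eps$ uses only these bounds and therefore applies verbatim to $b_\eps^K$.
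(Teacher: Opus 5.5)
Your proof is correct and follows the route the paper indicates: the paper gives no argument of its own beyond ``a Poincar\'e inequality on $K$ combined with the resolution condition $kH_{\max}\le\tau$'' and a citation to the earlier MHM work. Your sign-flip test function $v=\widetilde{\phi}-\overline{\phi}$ carries this out with constants depending only on $\Amin$, $\mu_{\min}$, $\mu_{\max}$ and the mesh regularity. Two small simplifications are available: $\|v\|_{H^1_k(K)}=\|\phi\|_{H^1_k(K)}$ holds exactly by $L^2(K)$-orthogonality of constants and mean-zero functions, and since $\widehat{\mu}^\eps$ is real the two cross terms sum to a purely imaginary quantity, so they drop out of $\Re\, b_\eps^K(\phi,v)$ without any Young's inequality.
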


A direct consequence of Proposition~\ref{proposition_inf_sup}
is that given $f \in L^2(K)$ and $\lambda \in H^{-1/2}(\partial K)$,
there exist unique $w_\eps, w_0 \in H^1(K)$ such that
\begin{equation}
\label{eq_appendix_problems}
b_\eps^K(w_\eps,v) = (f,v)_K + \langle \lambda, v \rangle_{\partial K},
\qquad
b_0^K(w_0,v) = (f,v)_K + \langle \lambda, v \rangle_{\partial K},
\end{equation}
for all $v \in H^1(K)$. We will establish the following
homogenization result.

\begin{theorem}
\label{theorem_local_homo}
Assume that $w_0 \in H^2(K)$ in~\eqref{eq_appendix_problems}. Then, we have
\ifPREPRINT
\begin{multline}
\label{eq_local_homogenization_error}
k\|w_\eps-w_0-\eps \bchi_\eps \cdot \grad w_0\|_{H^1_k(K)}
\lesssim
\\
\left (
(k\eps)^{1/2} + \left (\frac{\eps}{H_K}\right )^{1/2} + \frac{\eps}{H_K}
\right )
\left (
k\|w_0\|_{H^1_k(K)} + |w_0|_{H^2(K)}
\right ).
\end{multline}
\else
\begin{equation}
\label{eq_local_homogenization_error}
k\|w_\eps-w_0-\eps \bchi_\eps \cdot \grad w_0\|_{H^1_k(K)}
\lesssim
\\
\left (
(k\eps)^{1/2} + \left (\frac{\eps}{H_K}\right )^{1/2} + \frac{\eps}{H_K}
\right )
\left (
k\|w_0\|_{H^1_k(K)} + |w_0|_{H^2(K)}
\right ).
\end{equation}
\fi
\end{theorem}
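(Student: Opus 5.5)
The plan is to follow the classical first-order corrector argument with a boundary-layer cutoff, as in \cite{chaumont-frelet_scattering_2021}. The stability input is the local inf-sup condition of Proposition~\ref{proposition_inf_sup}, whose constant is independent of $\eps$ and $k$. We set $e_\eps \eq w_\eps - w_0 - \eps \eta_\delta \bchi_\eps\cdot\grad w_0$, where $\eta_\delta$ is a smooth cutoff equal to one away from a layer of width $\delta \sim \eps$ near $\partial K$, vanishing on $\partial K$, with $|\grad\eta_\delta|\lesssim \delta^{-1}$. The role of the cutoff is to kill the corrector on $\partial K$: there it would otherwise produce a spurious Neumann flux of size $O(1)$ in $\eps$. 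By Proposition~\ref{proposition_inf_sup}, $\|e_\eps\|_{H^1_k(K)} \lesssim \sup_v |b_\eps^K(e_\eps,v)|/\|v\|_{H^1_k(K)}$. Using~\eqref{eq_appendix_problems} for $w_\eps$ and $w_0$, we have $b_\eps^K(w_\eps,v) = b_0^K(w_0,v)$, so that
\[
b_\eps^K(e_\eps,v) = b_0^K(w_0,v) - b_\eps^K(w_0,v) - \eps b_\eps^K(\eta_\delta\bchi_\eps\cdot\grad w_0,v).
\]
We then expand the right-hand side with~\eqref{eq_delta_derivatives}.

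The zeroth-order mass term $k^2((\muH-\widehat\mu^\eps)w_0,v)_K$ is handled by writing $\muH-\widehat\mu = \div_{\by}\widehat{\bsigma}$ with a periodic Lipschitz potential. An integration by parts then gives a bound $\eps k^2(\|\grad w_0\|+\ldots)\|v\|$ together with a boundary term. In the interior, the flux part becomes $((\MAH - \widehat\MB^\eps)\grad w_0,\grad v)$, up to terms carrying an explicit factor $\eps$. The matrix $\MAH-\widehat\MB$ has zero mean and is divergence free in $\by$ by~\eqref{eq_definition_hchi}, so it equals $\div_{\by}$ of a skew-symmetric periodic flux corrector $\widehat{\MG}$. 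Its Lipschitz regularity follows from elliptic regularity exactly as for~\eqref{eq_estimate_hchi}, which is presumably the content of Lemma~\ref{lemma_heta}. Thanks to the skew symmetry, $(\eps(\div \widehat{\MG}^\eps)\eta_\delta\grad w_0,\grad v)$ can be integrated by parts with no boundary contribution, leaving $\eps\|\widehat\MG\|_\infty(|w_0|_{H^2(K)} + \delta^{-1}\|\grad w_0\|_{L^2(K_\delta)})\|\grad v\|$. Here $K_\delta$ denotes the layer of width $\delta$. The terms carrying $\eps$ times second derivatives or $k^2$ times $\eps\bchi_\eps\cdot\grad w_0$ contribute $\eps(|w_0|_{H^2}+k^2\|\grad w_0\|)$, which is at most $(k\eps)\,(k\|w_0\|_{H^1_k}+|w_0|_{H^2})/k$.

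The main obstacle is the layer term, which has size $\|\grad w_0\|_{L^2(K_\delta)}$, together with the cost of the $H^1$ norm of the cutoff corrector. For this we use a scaled trace inequality on the curved element: for $\phi\in H^1(K)$,
\[
\|\phi\|_{L^2(K_\delta)}^2 \lesssim \delta\left(H_K^{-1}\|\phi\|_{L^2(K)}^2 + \|\phi\|_{L^2(K)}|\phi|_{H^1(K)}\right).
\]
This is available uniformly thanks to the mesh regularity assumption with constant $\CK$. Applying it with $\phi=\grad w_0$ and $\delta=\eps$ gives
\[
\|\grad w_0\|_{L^2(K_\delta)} \lesssim (\eps/H_K)^{1/2}\|\grad w_0\| + \eps^{1/2}\|\grad w_0\|^{1/2}|w_0|_{H^2}^{1/2}.
\]
By Young's inequality with weights $k^{\pm1/2}$, the last term is bounded by $(k\eps)^{1/2}(\|\grad w_0\|+k^{-1}|w_0|_{H^2})$. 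Multiplying by $k$ yields exactly the factors $(k\eps)^{1/2}$ and $(\eps/H_K)^{1/2}$ against $k\|w_0\|_{H^1_k(K)}+|w_0|_{H^2(K)}$. The mass-term boundary contributions and the $H_K^{-1}$-type pieces give the $\eps/H_K$ term.

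Finally, we add back $\eps(1-\eta_\delta)\bchi_\eps\cdot\grad w_0$, which is supported in the layer. Its $H^1_k$ norm is controlled by $\|\grad w_0\|_{L^2(K_\delta)}+\eps|w_0|_{H^2}$, and the same layer estimate applies to it. A triangle inequality then gives~\eqref{eq_local_homogenization_error}. Throughout, we must check that no constant depends on $H_K$ except through the explicit ratios; this is where the scaled trace inequality on the curved element is the delicate point.
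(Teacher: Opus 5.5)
Your proof is correct, but it handles $\partial K$ differently from the paper.

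\textbf{How the paper argues.} The paper never cuts off the corrector. It works directly with $w_\eps-w_0-\eps\bchi_\eps\cdot\grad w_0$. The mass term is treated as in your proposal: the paper writes $\widehat{\mu}-\muH=-\grad_{\by}\cdot(\widehat{\MA}\grad_{\by}\heta)$, integrates by parts, and bounds the boundary term with~\eqref{eq_trace}, which gives $k\eps+\eps/H_K$. Note that this $\heta$ is what Lemma~\ref{lemma_heta} provides, not a flux corrector. For the flux term, the paper writes $(\widehat{\MB}-\MAH)\grad w_0=\grad_{\by}\times\bq$ using the vector potential of Proposition~\ref{lemma_vector_potential}; this is the analogue of your skew-symmetric flux corrector contracted with $\grad w_0$. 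Integrating by parts without a cutoff leaves the boundary term $\eps\langle(\grad\times\bq^\eps)\cdot\bn_K,v\rangle_{\partial K}$. This term is bounded in $H^{-1/2}(\partial K)$ by an estimate imported from the 2021 reference, with factor $\bigl(1+\frac{1}{kH_K}\frac{1}{k\eps}+\frac{1}{k\eps}\bigr)^{1/2}$. After multiplying by $k$, this single estimate produces both the $(k\eps)^{1/2}$ and the $(\eps/H_K)^{1/2}$ terms.

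\textbf{The difference.} The paper uses the fact that the $O(1)$ flux mismatch on $\partial K$ (your ``spurious Neumann flux'') is oscillatory, hence only $O(\eps^{1/2})$ in $H^{-1/2}(\partial K)$. You instead remove the mismatch with the cutoff and pay through the layer integral $\|\grad w_0\|_{L^2(K_\delta)}$.

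\textbf{What each route buys.} Your route is more self-contained. It needs only an $L^\infty$ bound on the flux corrector and a scaled layer inequality on curved simplices, and the half-powers come out transparently. The paper's route is shorter on the page, but it delegates the boundary-layer analysis to the cited vector-potential result. The authors themselves had to re-inspect that result to obtain the $(kH_K)^{-1}$ dependence.

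\textbf{One point to add.} Building the cutoff with $\delta\sim\eps$ requires $\eps\lesssim H_K$. In the complementary regime the estimate is trivial: Proposition~\ref{proposition_inf_sup} gives $\|w_\eps\|_{H^1_k(K)}\lesssim\|w_0\|_{H^1_k(K)}$, and $k\eps\le\tau\,\eps/H_K$ because $kH_K\leq\tau$, so every term is dominated by $(\eps/H_K)\bigl(k\|w_0\|_{H^1_k(K)}+|w_0|_{H^2(K)}\bigr)$.
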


\begin{proof}
Consider an arbitrary $v \in H^1(K)$. We have
\begin{equation*}
b_\eps^K(w_\eps,v) = (f,v)_K+\langle \lambda,v \rangle_{\partial K} = b_0^K(w_0,v),
\end{equation*}
and therefore
\begin{equation}
b_\eps^K(w_\eps-w_0-\eps \bchi^\eps \cdot \grad w_0,v)
=
b_0(w_0,v)-b_\eps(w_0,v)-b(\eps \bchi^\eps \cdot \grad w_0,v),
\end{equation}
which we regroup as
\ifPREPRINT
\begin{multline*}
|b_\eps^K(w_\eps-w_0-\eps \bchi^\eps \cdot \grad w_0,v)|
\leq
|k^2((\muH-\widehat{\mu}^\eps)w_0,v)_K|
\\
+
|
((\MAH-\widehat{\MA}^\eps)\grad u_0,\grad v)_K
-
b_\eps^K(\eps\widehat{\bchi}^\eps \cdot \grad w_0,v)
|.
\end{multline*}
\else
\begin{equation*}
|b_\eps^K(w_\eps-w_0-\eps \bchi^\eps \cdot \grad w_0,v)|
\leq
|k^2((\muH-\widehat{\mu}^\eps)w_0,v)_K|
\\
+
|
((\MAH-\widehat{\MA}^\eps)\grad u_0,\grad v)_K
-
b_\eps^K(\eps\widehat{\bchi}^\eps \cdot \grad w_0,v)
|.
\end{equation*}
\fi
We will prove in Lemmas~\ref{lemma_local_homo_L2} and~\ref{lemma_local_homo_H1}
below that
\begin{equation*}
k|k^2((\muH-\mu_\eps)w_0,v)_K|
\lesssim
\left (k\eps + \frac{\eps}{H_K}\right )k\|w_0\|_{H^1_k(K)} \|v\|_{H^1_k(K)}
\end{equation*}
and
\begin{multline*}
k|
((\MAH-\MA_\eps)\grad u_0,\grad v)_K
-
b_\eps^K(\eps\bchi^\eps \cdot \grad w_0,v)
|
\lesssim
\\
\left (
(k\eps)^{1/2}
+
\left (\frac{\eps}{H_K}\right )^{1/2}
\right )
\left (
k\|w_0\|_{H^1_k(K)} + |w_0|_{H^2(K)}
\right )
\|v\|_{H^1_k(K)},
\end{multline*}
this leads us to
\ifPREPRINT
\begin{multline*}
k|b_\eps^K(w_\eps-w_0-\eps \bchi^\eps \cdot \grad w_0,v)|
\lesssim
\\
\left \{
(k\eps)^{1/2}
+
\left (\frac{\eps}{H_K}\right )^{1/2}
+
\frac{\eps}{H_K}
\right )
\left (
k\|w_0\|_{H^1_k(K)} + |w_0|_{H^2(K)}
\right )
\|v\|_{H^1_k(K)},
\end{multline*}
\else
\begin{equation*}
k|b_\eps^K(w_\eps-w_0-\eps \bchi^\eps \cdot \grad w_0,v)|
\lesssim
\\
\left \{
(k\eps)^{1/2}
+
\left (\frac{\eps}{H_K}\right )^{1/2}
+
\frac{\eps}{H_K}
\right )
\left (
k\|w_0\|_{H^1_k(K)} + |w_0|_{H^2(K)}
\right )
\|v\|_{H^1_k(K)},
\end{equation*}
\fi
and~\eqref{eq_local_homogenization_error} follows from the inf-sup condition
in Proposition~\ref{proposition_inf_sup}.
\end{proof}

The rest of this section is dedicated to establishing Lemmas~\ref{lemma_local_homo_L2}
and~\ref{lemma_local_homo_H1} to conclude the proof of Theorem~\ref{theorem_local_homo}.

\subsection{Correctors}
\label{subsection_correctors}

Following \cite[Section 7.1]{cioranescu_donato_1999a},
we need to introduce specific functions $D \times Y \to \R$
to perform the homogenization analysis of the local problems.
Such functions are often called ``correctors'' in this context.

\begin{lemma}[First-order corrector]
\label{lemma_hchi}
For any $w_0 \in H^2(K)$, let $w_1 \eq \widehat{\bchi} \cdot \grad w_0$.
Then, we have
\begin{equation*}
k\|w_1^\eps\|_{L^2(K)}
+
\|(\grad_{\bx} w_1)^\eps\|_{\BL^2(K)}
\lesssim
k \|w_0\|_{H^1_k(K)} + |w_0|_{H^2(K)}.
\end{equation*}
\end{lemma}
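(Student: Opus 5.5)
The plan is to treat $w_1$ as a two-scale function $w_1(\bx,\by) = \widehat{\bchi}(\by)\cdot\grad w_0(\bx)$ for $\bx \in K$, $\by \in Y$, so that $w_1^\eps(\bx) = \widehat{\bchi}^\eps(\bx)\cdot\grad w_0(\bx)$. Here $\widehat{\bchi}$ depends only on $\by$, and $(\grad_{\bx} w_1)^\eps$ is the partial derivative in the slow variable $\bx$ evaluated at $\by=\{\bx/\eps\}$. With this reading, $(\grad_{\bx} w_1)^\eps = (\grad w_0 \cdot \nabla)\cdots$; more precisely, its $j$-th component is
\begin{equation*}
\left(\pd{w_1}{\bx_j}\right)^\eps
=
\sum_{m=1}^d \widehat{\chi}_m^\eps \, \frac{\partial^2 w_0}{\partial \bx_j \partial \bx_m}.
\end{equation*}
In particular, no factor $1/\eps$ appears, because the identity~\eqref{eq_delta_derivatives} is not used: we only differentiate in $\bx$.

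For the $L^2$ term, I use the pointwise bound $|w_1^\eps(\bx)| \leq \|\widehat{\bchi}\|_{\BL^\infty(Y)} |\grad w_0(\bx)|$. Squaring, integrating over $K$, and invoking~\eqref{eq_estimate_hchi} gives
\begin{equation*}
k\|w_1^\eps\|_{L^2(K)} \lesssim k\|\grad w_0\|_{\BL^2(K)} \leq k\|w_0\|_{H^1_k(K)}.
\end{equation*}
The gradient term is handled the same way. Pointwise, $|(\grad_{\bx} w_1)^\eps(\bx)| \lesssim \|\widehat{\bchi}\|_{\BL^\infty(Y)} |D^2 w_0(\bx)|$, hence
\begin{equation*}
\|(\grad_{\bx} w_1)^\eps\|_{\BL^2(K)} \lesssim |w_0|_{H^2(K)},
\end{equation*}
again by~\eqref{eq_estimate_hchi}. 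Adding the two estimates gives the claim.

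The only point needing care is measurability and the meaning of composing with $\{\bx/\eps\}$. Since $\widehat{\bchi} \in \BW^{1,\infty}_\sharp(Y)$ is continuous, $\widehat{\bchi}^\eps$ is a bounded continuous function. Its product with the $L^2$ functions $\grad w_0$ and $D^2 w_0$ is therefore well defined in $L^2(K)$, and the pointwise bounds hold almost everywhere. There is no real obstacle beyond this; the main thing is to stress that the bound deliberately concerns $(\grad_{\bx} w_1)^\eps$ rather than $\grad(w_1^\eps)$. The latter would carry the additional term $\eps^{-1}(\grad_{\by}\widehat{\bchi})^\eps \grad w_0$, which is compensated by the factor $\eps$ in $\eps\bchi_\eps\cdot\grad w_0$ and is treated separately in the subsequent lemmas.
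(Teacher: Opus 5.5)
Your proposal is correct and follows essentially the same argument as the paper: write $w_1^\eps = \widehat{\bchi}^\eps \cdot \grad w_0$, note that $\widehat{\bchi}$ does not depend on $\bx$ so that $|(\grad_{\bx} w_1)^\eps| \lesssim |\grad^2 w_0|$ pointwise, and integrate over $K$ using the uniform bound~\eqref{eq_estimate_hchi}. Your extra remarks on measurability and on the difference between $(\grad_{\bx} w_1)^\eps$ and $\grad(w_1^\eps)$ are accurate but go beyond what the paper needs for this lemma.
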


\begin{proof}
We have $w_1^\eps = \widehat{\bchi}^\eps \cdot \grad w_0$,
and it follows from~\eqref{eq_estimate_hchi} that
\begin{equation*}
\|w_1^\eps\|_{L^2(K)} \lesssim |w_0|_{H^1(K)} \leq \|w_0\|_{H^1_k(K)}.
\end{equation*}
For the other part of the estimate, we observe that
\begin{equation*}
|\grad_{\bx} u_1| \lesssim |\grad^2 w_0|
\end{equation*}
for a.e. $\bx \in K$, where $\grad^2 w_0$ is the Hessian of $w_0$. Indeed,
$\hchi$ does not depend on $\bx$, and it is
uniformly bounded. The conclusion then follows by integrating
this point-wise estimate.
\end{proof}

\begin{lemma}[Second-order corrector]
\label{lemma_heta}
Let $\heta$ be the unique element of $H^1_{\sharp}(Y)$ with
zero mean value such that
\begin{equation}
\label{eq_definition_heta}
-\grad_y \cdot \left ( \widehat{\MA} \grad_y \heta \right )
=
\widehat{\mu}-\muH.
\end{equation}
Then, we have  $\heta \in W_\sharp^{1,\infty}(Y)$ and
\begin{equation}
\label{eq_estimate_heta}
\|\heta\|_{W^{1,\infty}(Y)} \lesssim 1.
\end{equation}
\end{lemma}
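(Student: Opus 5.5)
The argument has two steps. First, the cell problem~\eqref{eq_definition_heta} is well posed in $H^1_\sharp(Y)$ with zero mean. Second, the solution is Lipschitz, which comes from elliptic regularity for a periodic divergence-form operator with Lipschitz coefficients.

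\textbf{Well-posedness.} The right-hand side $\widehat{\mu}-\muH$ has zero mean over $Y$ by definition of $\muH=\mean{\widehat{\mu}}$, so it satisfies the compatibility condition of the periodic Neumann-type problem. On the subspace of $H^1_\sharp(Y)$ of zero-mean functions, the form $(\widehat{\MA}\grad_y \phi,\grad_y v)_Y$ is coercive. This follows from the lower bound $\Amin$ and the Poincar\'e--Wirtinger inequality on the torus. The Lax--Milgram lemma therefore gives a unique zero-mean solution $\heta$. Testing with $\heta$ yields $\|\heta\|_{H^1(Y)} \lesssim \|\widehat{\mu}-\muH\|_{L^2(Y)} \lesssim \mu_{\max}$. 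The equation extends by periodicity to all of $\R^d$, because $\heta$ is periodic and the test functions can be taken periodic.

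\textbf{$H^2$ regularity.} Since $\widehat{\MA}\in \MW^{1,\infty}_\sharp(Y)$, the equation can be written in nondivergence form,
\begin{equation*}
-\widehat{\MA}:\grad_y^2\heta = \widehat{\mu}-\muH + (\div_y \widehat{\MA})\cdot\grad_y\heta .
\end{equation*}
Here $\div_y\widehat{\MA}$ denotes the row-wise divergence. I will first obtain $H^2_{\rm loc}$ regularity by the difference-quotient method, which applies because the coefficients are Lipschitz. Since everything is periodic, this is $H^2$ on the torus, and interior estimates on a slightly larger cube give
\begin{equation*}
\|\heta\|_{H^2(Y)}\lesssim \|\widehat{\mu}-\muH\|_{L^2(Y)}+\|\heta\|_{H^1(Y)}\lesssim 1 .
\end{equation*}

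\textbf{Bootstrap to $W^{1,\infty}$.} The nondivergence form has continuous (indeed Lipschitz) leading coefficients. Calder\'on--Zygmund $W^{2,p}$ interior estimates then apply for every $p<\infty$: if $\grad_y\heta\in L^{p}$, the right-hand side is in $L^p$, so $\heta\in W^{2,p}$. Starting from $\grad_y\heta\in L^{2^*}$ by Sobolev embedding of the $H^2$ bound, a finite number of steps in $d=2,3$ reaches some $p>d$. Morrey's embedding then gives $\grad_y\heta\in L^\infty$, and since $\heta$ is bounded, $\heta\in W^{1,\infty}_\sharp(Y)$. All constants depend only on $\Amin$, $\Amax$, $|\widehat{\MA}|_{\MW^{1,\infty}(Y)}$, $\mu_{\max}$ and $d$, which gives~\eqref{eq_estimate_heta}.

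Alternatively, the divergence-form equation $-\div_y(\widehat{\MA}\grad_y\heta)=g\in L^\infty$ with Lipschitz (hence Dini/H\"older) coefficients has a $C^{1,\alpha}$ solution by Schauder-type estimates. Either route is the one used for~\eqref{eq_estimate_hchi}.

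The main obstacle is the bootstrap to $L^\infty$ control of the gradient. $H^2$ regularity alone does not give Lipschitz bounds when $d=3$, so a genuine $W^{2,p}$ with $p>d$, or $C^{1,\alpha}$, estimate is required. This is exactly where the Lipschitz assumption on $\widehat{\MA}$ is used.
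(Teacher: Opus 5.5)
Your proof is correct, but it takes a different route from the paper. The paper rewrites the cell problem as $-\Delta_y\heta=\psi$ with $\psi=\widehat{\mu}-\muH+\grad_y(\widehat{\MA})\cdot\grad\heta$. It asserts $\psi\in H^1_\sharp(Y)$, deduces $\heta\in H^3_\sharp(Y)$, and concludes with the embedding $H^3\hookrightarrow W^{1,\infty}$ for $d\le 3$.

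That Hilbertian bootstrap is shorter, but as written it needs more smoothness than is assumed. For $\widehat{\MA}$ merely Lipschitz, $\grad_y\widehat{\MA}$ is only $L^\infty$, so $\psi$ is in general not in $H^1$, and in fact $\heta\notin H^3$ in general. To see this, take $\widehat{\MA}=\widehat{a}(y_1)\MI$ and $\widehat{\mu}=\widehat{\mu}(y_1)$. Then $\heta$ depends on $y_1$ only, and $\heta''=-(\widehat{\mu}-\muH+\widehat{a}'\heta')/\widehat{a}$ jumps at any kink of $\widehat{a}$ where $\heta'\neq0$. The Laplacian form is also only accurate for a scalar coefficient after division.

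Your route has three steps: $H^2$ by difference quotients, then either $W^{2,p}$ with $p>d$ by Calder\'on--Zygmund theory for the nondivergence form or $C^{1,\alpha}$ estimates for the divergence form, then Morrey. It uses heavier $L^p$/Schauder machinery, but it matches the hypothesis $\widehat{\MA}\in\MW^{1,\infty}_\sharp(Y)$ exactly, so it actually proves the lemma under the paper's assumptions.

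Two small points of precision:
\begin{enumerate}
\item The step ``$\grad_y\heta\in L^p$ implies $\heta\in W^{2,p}$'' is a regularity statement, not the a priori interior estimate itself. It should be justified by the standard upgrade lemma for strong solutions already in $W^{2,2}$ (as in Chapter 9 of Gilbarg--Trudinger), which applies because you have $\heta\in H^2$.
\item Your closing remark overstates where Lipschitz continuity enters. The $W^{2,p}$ theory needs only continuous leading coefficients, and the $C^{1,\alpha}$ route only H\"older ones. Lipschitz regularity is used for the $H^2$ step and to pass to nondivergence form.
\end{enumerate}
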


\begin{proof}
Due to the compatibility condition
\begin{equation*}
\int_Y \widehat{\mu}-\muH = 0,
\end{equation*}
the problem defining $\widehat{\eta}$ is indeed well-posed.
Since $\widehat{\MA} \in \MW^{1,\infty}(K)$, we can rewrite
the definition of $\widehat{\eta}$ as
\begin{equation*}
-\Delta_{\by} \widehat{\eta}
=
\widehat{\mu}-\muH+\grad_y(\widehat{\MA}) \cdot \grad \widehat{\eta} =: \psi.
\end{equation*}
We already now that $\widehat{\eta} \in H^1_\sharp(Y)$, so that due
the assumption on $\widehat{\mu}$ and $\widehat{\MA}$, we have
$\psi \in H^1_\sharp(Y)$. It follows from standard regularity theory
that $\widehat{\eta} \in H^3_\sharp(Y)$, and we conclude that
$\widehat{\eta} \in W^{1,\infty}(Y)$ by a standard Sobolev embedding.
It is also clear that the constant only depends on the $W^{1,\infty}_\sharp(Y)$
norms of $\widehat{\MA}$ and $\widehat{\mu}$, justifying the estimate
in~\eqref{eq_estimate_heta}.
\end{proof}

The following trace inequality will be useful. It is an easy consequence
of the multiplicative trace inequality, see e.g.~\cite[Theorem 1.5.1.10]{grisvard_1985a}.
Here, the trace estimate holds uniformly under our regularity
assumptions on the mesh.

\begin{proposition}
For all $v \in H^1(K)$, we have
\begin{equation}
\label{eq_trace}
k\|v\|_{L^2(\partial K)}^2
\lesssim
\left (1+\frac{1}{kH_K}\right )\|v\|_{H^1_k(K)}^2.
\end{equation}
\end{proposition}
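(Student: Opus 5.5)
The plan is to start from the multiplicative trace inequality on $K$,
\begin{equation*}
\|v\|_{L^2(\partial K)}^2
\lesssim
\frac{1}{H_K}\|v\|_{L^2(K)}^2 + \|v\|_{L^2(K)}|v|_{H^1(K)},
\end{equation*}
valid for all $v \in H^1(K)$. The hidden constant is uniform over the mesh. To see this, pull back to the reference simplex $\widehat K$ through $\CF_K$, apply the trace inequality there (see~\cite[Theorem 1.5.1.10]{grisvard_1985a}), and map back. The Jacobian factors $\mathbb{J}_K$, $\det \mathbb{J}_K$ and the surface measure change are controlled by $H_K$ and by the regularity constant $\CK$ of Section~\ref{section_curved_mesh}. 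This scaling step is the only place needing some care, since the elements are curved. For the standard smooth-map assumptions of~\cite[Definition A.1]{chaumontfrelet_henning_2026a}, however, these factors are bounded by powers of $H_K$ with constants depending only on $\CK$, so I expect nothing beyond routine bookkeeping.

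Next, multiply the inequality by $k$ and treat the two terms separately. The first term gives
\begin{equation*}
\frac{k}{H_K}\|v\|_{L^2(K)}^2 = \frac{1}{kH_K}\,k^2\|v\|_{L^2(K)}^2 \leq \frac{1}{kH_K}\|v\|_{H^1_k(K)}^2 .
\end{equation*}
For the second term, Young's inequality yields
\begin{equation*}
k\|v\|_{L^2(K)}|v|_{H^1(K)} \leq \tfrac12\left(k^2\|v\|_{L^2(K)}^2 + |v|_{H^1(K)}^2\right) = \tfrac12\|v\|_{H^1_k(K)}^2 .
\end{equation*}

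Summing these two bounds gives
\begin{equation*}
k\|v\|_{L^2(\partial K)}^2 \lesssim \left(1+\frac{1}{kH_K}\right)\|v\|_{H^1_k(K)}^2,
\end{equation*}
which is~\eqref{eq_trace}.
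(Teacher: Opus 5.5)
Your proposal is correct and follows the same route as the paper, which simply invokes the multiplicative trace inequality from Grisvard and asserts that it holds uniformly under the mesh regularity assumptions. Your write-up just makes the omitted steps explicit: the reference-element scaling that gives the uniform constant, then $k/H_K = (kH_K)^{-1}k^2$ for the first term and Young's inequality for the second.
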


Finally, we will need a vector potential to deal with boundary conditions.
The result below is from~\cite[Theorem B.6]{chaumont-frelet_scattering_2021}.
The theorem in~\cite{chaumont-frelet_scattering_2021} states that the boundary
should be $C^{1,1}$, but this is in fact not required, and the regularity of
$\partial K$ suffices. The proof of this theorem requires that $\widehat{\MA}$
does not depend on $\by_d$, but in fact, this assumption
is only used in the proof of~\cite[Theorem B.3]{chaumont-frelet_scattering_2021}
to invoke a critical Sobolev inequality when the regularity of $\widehat{\MA}$
is low. Here, we assume that $\widehat{\MA} \in \MW^{1,\infty}(Y)$, so that
we can bypass this assumption. We finally point out that the
bound~\cite[Eq. (B.31)]{chaumont-frelet_scattering_2021} reads a bit
differently from the one below. This is because an assumption similar to
$kH_K \gtrsim 1$ is made in~\cite{chaumont-frelet_scattering_2021}. A careful
inspection of the proof of~\cite[Theorem B.5]{chaumont-frelet_scattering_2021}
shows that bound below holds true with the extra $(kH_K)^{-1}$ term.

\begin{proposition}[Existence of vector potential]\label{lemma_vector_potential}
There exists a vector potential $\bq : K \times Y \to \mathbb{C}^d$ such that
\begin{equation*}
\grad_{\by} \times \bq = (\widehat{\MB} - \MAH)\grad \widehat{w}_0.
\end{equation*}
In addition, we have
\begin{equation*}
\|(\grad_{\bx} \times \bq)^\eps\|_{L^2(K)} 
\lesssim
|w_0|_{H^2(K)},
\end{equation*}
and
\begin{equation*}
\|\grad \times \bq^\varepsilon \cdot \bn_K\|_{H^{-1/2}(\partial K)} 
\lesssim
\left (
1 + \frac{1}{kH_K}\frac{1}{k\eps} + \frac{1}{k\eps}
\right )^{1/2}
\left (
k \|w_0\|_{H^1_k(K)} + |w_0|_{H^2(K)}
\right ).
\end{equation*}
\end{proposition}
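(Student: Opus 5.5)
The plan is to build $\bq$ explicitly, columnwise. Then we derive the two bounds from the regularity of the cell corrector $\widehat{\bchi}$ and from a scaled trace/duality argument on $\partial K$.

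\textbf{Construction.} For each $1 \leq j \leq d$, the periodic field $\widehat{\bg}_j \eq (\widehat{\MB}-\MAH)\be_j$ has zero mean over $Y$. By the cell problem~\eqref{eq_definition_hchi} it is also divergence free in $\by$. Indeed, $\div_{\by}(\widehat{\MA}(\MI-\widehat{\MC}))\be_j = 0$ is exactly the corrector equation, up to the transposition convention for $\widehat{\MC}$. Standard periodic Helmholtz--Hodge theory then gives a potential $\widehat{\bPsi}_j \in \BH^1_\sharp(Y)$, divergence free with zero mean, such that $\grad_{\by}\times\widehat{\bPsi}_j = \widehat{\bg}_j$. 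It is obtained by solving $-\Delta_{\by}\widehat{\bPsi}_j = \grad_{\by}\times\widehat{\bg}_j$ on the torus. For $d=2$ we use the scalar curl. Since $\widehat{\MA}\in\MW^{1,\infty}_\sharp$ and $\widehat{\bchi}\in\BW^{1,\infty}$ by~\eqref{eq_estimate_hchi}, the data $\widehat{\bg}_j$ lie in $\BL^\infty$ (in fact $H^1$ after elliptic regularity, as in Lemma~\ref{lemma_heta}). Hence $\widehat{\bPsi}_j \in \BW^{1,\infty}$ by the same $H^3$/Sobolev-embedding argument. We then set $\bq(\bx,\by) \eq \sum_j \partial_j w_0(\bx)\,\widehat{\bPsi}_j(\by)$. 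Because the $\by$-curl of $\bq$ acts only on $\widehat{\bPsi}_j$, this yields the identity $\grad_{\by}\times\bq = (\widehat{\MB}-\MAH)\grad w_0$.

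\textbf{First bound.} We have $(\grad_{\bx}\times\bq)^\eps = \sum_j \grad(\partial_j w_0)\times\widehat{\bPsi}_j^\eps$. The $L^\infty$ bound on $\widehat{\bPsi}_j$ then gives the pointwise estimate $|(\grad_{\bx}\times\bq)^\eps|\lesssim|\grad^2 w_0|$, exactly as in Lemma~\ref{lemma_hchi}. Integrating over $K$ gives the claim.

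\textbf{Second bound (the main obstacle).} By~\eqref{eq_delta_derivatives}, $\grad\times\bq^\eps = (\grad_{\bx}\times\bq)^\eps + \eps^{-1}(\grad_{\by}\times\bq)^\eps$. The second term is $O(\eps^{-1})$ and must not be estimated directly. Instead we use the key observation that $\grad\times(\eps\bq^\eps)$ is divergence free. Its normal trace therefore satisfies, for any $v\in H^1(K)$,
\begin{equation*}
\langle \grad\times\bq^\eps\cdot\bn_K, v\rangle_{\partial K}
=
(\grad\times\bq^\eps,\grad v)_K .
\end{equation*}
We then take a cutoff $\theta_\eps$ equal to one in an $\eps$-neighbourhood of $\partial K$ and vanishing outside a $2\eps$-layer. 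We replace $v$ by $\theta_\eps v$, which leaves the trace unchanged, and integrate by parts back, moving the curl onto $\theta_\eps v$. This gives
\begin{equation*}
\langle \grad\times\bq^\eps\cdot\bn_K, v\rangle = (\bq^\eps\times\bn_K, \ldots)
\end{equation*}
type terms. More concretely, the pairing is bounded by $\|\bq^\eps\|_{L^2(S_\eps)}\|\grad(\theta_\eps v)\|_{L^2(S_\eps)}$ plus a boundary term of the form $\int_{\partial K}$ of $\bq^\eps\times\bn_K$ against the tangential gradient. Here $S_\eps$ is the boundary layer. Since $\bq^\eps$ is bounded by $|\grad w_0|$, the boundary-layer estimate $\|\grad w_0\|_{L^2(S_\eps)}^2\lesssim \eps\,\|\grad w_0\|_{L^2(\partial K)}\ldots$, combined with the trace inequality~\eqref{eq_trace}, gives $\|\grad w_0\|^2_{L^2(S_\eps)}\lesssim \eps\,k^{-1}(1+(kH_K)^{-1})\bigl(k^2|w_0|_{H^1}^2+|w_0|_{H^2}^2\bigr)$. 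Similarly, $\|\grad(\theta_\eps v)\|_{L^2(S_\eps)}\lesssim \eps^{-1}\|v\|_{L^2(S_\eps)}+|v|_{H^1}$, and $\eps^{-2}\|v\|^2_{L^2(S_\eps)}\lesssim\eps^{-1}\|v\|^2_{L^2(\partial K)}+|v|^2_{H^1}$, again handled via~\eqref{eq_trace}. Collecting the factors $1$, $(k\eps)^{-1}$ and $(kH_K)^{-1}(k\eps)^{-1}$ should produce the stated square-root prefactor, after normalizing $v$ in $H^1_k(K)$ and recalling that the $H^{-1/2}$ norm here is the $k$-weighted dual norm.

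The delicate step is this boundary-layer bookkeeping. One must keep track of which factor carries $\eps$ versus $H_K$, and use that $\partial K$ is smooth enough, uniformly in $\CK$, for the $\eps$-layer tubular coordinates. This is where I would follow the proof of~\cite[Theorem B.5]{chaumont-frelet_scattering_2021}, retaining rather than discarding the $(kH_K)^{-1}$ contributions from the trace inequality.
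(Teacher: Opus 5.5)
Your construction of $\bq$ is correct: you take periodic potentials $\widehat{\boldsymbol{\Psi}}_j$ of the zero-mean, $\by$-divergence-free columns of $\widehat{\MB}-\MAH$ and weight them by $\partial_j w_0$. The first bound is also correct. Only $\widehat{\boldsymbol{\Psi}}_j \in \BL^\infty$ is needed, which follows from $\widehat{\boldsymbol{\Psi}}_j \in \BH^2$ and $d \leq 3$; your ``$H^3$'' step would need $H^2$ data and is unnecessary. The paper gives no argument of its own here and cites \cite{chaumont-frelet_scattering_2021}, so an explicit construction is welcome.

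\textbf{The gap is in the second bound}, which is the real content of the proposition. Moving the curl from $\bq^\eps$ onto $\grad(\theta_\eps v)$ does not produce the volume term $\|\bq^\eps\|_{\BL^2(S_\eps)}\|\grad(\theta_\eps v)\|_{\BL^2(S_\eps)}$ that you estimate. Since $\grad\times\grad(\theta_\eps v)=0$, that volume term vanishes identically. Everything is pushed into $\int_{\partial K}(\bq^\eps\times\grad v)\cdot\bn_K$, which involves the tangential gradient of the trace of $v$. That gradient is not in $L^2(\partial K)$ for $v\in H^1(K)$, and you leave this term unestimated. Note also that the term you do bound is uniformly bounded in $\eps$, which signals that the $\eps^{-1}$ in $\grad\times\bq^\eps$ has been dropped rather than controlled.

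If you instead bound $(\grad\times\bq^\eps,\grad(\theta_\eps v))_K$ directly, you pair $\eps^{-1}(\grad_{\by}\times\bq)^\eps$ with $\eps^{-1}v\grad\theta_\eps$ on a layer of width $\eps$. This yields $\eps^{-1}\|\grad w_0\|_{\BL^2(\partial K)}\|v\|_{L^2(\partial K)}$, i.e.\ a prefactor $(k\eps)^{-1}(1+(kH_K)^{-1})$ with no square root. That is useless in Lemma~\ref{lemma_local_homo_H1}: after multiplication by $k\eps$ it gives no smallness.

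\textbf{The missing idea is to cut off the potential, not the test function.} Since $(1-\theta_\eps)\bq^\eps$ vanishes near $\partial K$, you have $\grad\times\bq^\eps\cdot\bn_K=\grad\times(\theta_\eps\bq^\eps)\cdot\bn_K$. Hence
\[
\langle \grad\times\bq^\eps\cdot\bn_K, v\rangle_{\partial K}
=
(\grad\theta_\eps\times\bq^\eps+\theta_\eps\,\grad\times\bq^\eps,\grad v)_K .
\]
Equivalently, in your setting, split $\grad(\theta_\eps v)=v\grad\theta_\eps+\theta_\eps\grad v$ and integrate by parts only the first piece. Use $\grad\times(v\grad\theta_\eps)=\grad v\times\grad\theta_\eps$, and note that the boundary term vanishes because $\grad\theta_\eps=0$ on $\partial K$. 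This is also what a bound of $\bn_K\times\bq^\eps$ in $H^{1/2}(\partial K)$ via a lifting amounts to.

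Now every $\eps^{-1}$ multiplies $\grad v$ rather than $\eps^{-1}v$, so
\[
|\langle \grad\times\bq^\eps\cdot\bn_K, v\rangle_{\partial K}|
\lesssim
\left(\eps^{-1}\|\grad w_0\|_{\BL^2(S_\eps)}+|w_0|_{H^2(K)}\right)|v|_{H^1(K)}
\lesssim
\left(\eps^{-1/2}\|\grad w_0\|_{\BL^2(\partial K)}+|w_0|_{H^2(K)}\right)|v|_{H^1(K)} .
\]
Applying \eqref{eq_trace} to each $\partial_j w_0$ then gives exactly the factor $\left(1+(k\eps)^{-1}+(k\eps)^{-1}(kH_K)^{-1}\right)^{1/2}$. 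When $\eps\gtrsim H_K$, no such layer fits in $K$. In that case use no cutoff and the bound $\eps^{-1}\lesssim(\eps H_K)^{-1/2}$.
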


\subsection{End of the proof}

With the above results above in place, we are in position to conclude the proof of
Theorem~\ref{theorem_local_homo}. From here on, $w_0 \in H^2(K)$ is the homogenized
solution in~\eqref{eq_appendix_problems} and $v \in H^1(K)$ is an arbitrary test
function.

\begin{lemma}
\label{lemma_local_homo_L2}
We have
\begin{equation}
\label{eq_estimate_L2_homogenization}
\left |k^2\left ((\widehat{\mu}^\eps-\muH) w_0,v\right )_K\right |
\lesssim
\left (
k\eps + \frac{\eps}{H_K}
\right )
\|w_0\|_{H^1_k(K)}\|v\|_{H^1_k(K)}.
\end{equation}
\end{lemma}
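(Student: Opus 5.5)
Since $\widehat{\mu}-\muH$ has zero mean, the plan is to write it as a divergence at the fast scale using the second-order corrector $\heta$ of Lemma~\ref{lemma_heta}, and then integrate by parts in $K$. Set $\widehat{\bsigma} \eq \widehat{\MA}\grad_{\by}\heta$. By~\eqref{eq_definition_heta}, $-\grad_{\by}\cdot\widehat{\bsigma} = \widehat{\mu}-\muH$, and by~\eqref{eq_estimate_heta}, $\|\widehat{\bsigma}\|_{L^\infty(Y)}\lesssim 1$. The chain rule identity~\eqref{eq_delta_derivatives} for a pattern with no slow variable then gives
\begin{equation*}
\widehat{\mu}^\eps-\muH = -\eps\, \div\big(\widehat{\bsigma}^\eps\big) \quad \text{in } K .
\end{equation*}
The integral $((\widehat{\mu}^\eps-\muH)w_0,v)_K$ thus carries an explicit factor $\eps$, and integrating by parts moves the derivative onto $w_0\overline{v}$.

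Concretely, I would write
\begin{equation*}
k^2((\widehat{\mu}^\eps-\muH)w_0,v)_K
=
\eps k^2 \big(\widehat{\bsigma}^\eps, \grad(w_0\overline{v})\big)_K
-
\eps k^2 \int_{\partial K} \widehat{\bsigma}^\eps\cdot\bn_K \, w_0\overline{v},
\end{equation*}
which is legitimate since $w_0\overline{v}\in W^{1,1}(K)$ and $\widehat{\bsigma}^\eps\in \BL^\infty$ with $L^2$ divergence. The volume term is bounded by
\begin{equation*}
\eps k^2\|\widehat{\bsigma}\|_{L^\infty}\big(\|\grad w_0\|_{L^2(K)}\|v\|_{L^2(K)}+\|w_0\|_{L^2(K)}\|\grad v\|_{L^2(K)}\big)
\lesssim
k\eps\,\|w_0\|_{H^1_k(K)}\|v\|_{H^1_k(K)},
\end{equation*}
using $k\|v\|_{L^2(K)}\le\|v\|_{H^1_k(K)}$ and $k\|w_0\|_{L^2(K)}\le\|w_0\|_{H^1_k(K)}$.

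The main point is the boundary term, which I would handle with the trace inequality~\eqref{eq_trace} applied to both $w_0$ and $v$:
\begin{equation*}
\eps k^2\|w_0\|_{L^2(\partial K)}\|v\|_{L^2(\partial K)}
\lesssim
\eps k\left(1+\frac{1}{kH_K}\right)\|w_0\|_{H^1_k(K)}\|v\|_{H^1_k(K)}
=
\left(k\eps+\frac{\eps}{H_K}\right)\|w_0\|_{H^1_k(K)}\|v\|_{H^1_k(K)} .
\end{equation*}
Combining the two bounds yields~\eqref{eq_estimate_L2_homogenization}. Only the $H^1$ regularity of $w_0$ is used here, not the $H^2$ assumption; the $\eps/H_K$ contribution is exactly the boundary-layer effect of the element boundary.
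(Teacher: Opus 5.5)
Your proposal is correct and is essentially the paper's proof. Both rewrite $\widehat{\mu}^\eps-\muH=-\eps\,\div(\widehat{\MA}\grad_{\by}\heta)^\eps$ using the corrector of Lemma~\ref{lemma_heta}, integrate by parts on $K$, bound the volume terms by $k\eps\|w_0\|_{H^1_k(K)}\|v\|_{H^1_k(K)}$ via $\|\heta\|_{W^{1,\infty}(Y)}\lesssim 1$, and handle the boundary term with the trace inequality~\eqref{eq_trace}, which is where the $\eps/H_K$ contribution comes from. Your tracking of the powers of $k$ is in fact slightly cleaner than the paper's intermediate display, which is missing a factor $k$ in front of $|w_0|_{H^1(K)}\|v\|_{L^2(K)}$.
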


\begin{proof}
Recalling the definition of $\heta$ from Lemma~\ref{lemma_heta}, we have
\begin{equation*}
\widehat{\mu}^\eps-\muH
=
(\widehat{\mu}-\muH)^\eps
=
- (\grad_{\by} \cdot (\widehat{\MA} \grad_{\by} \heta ))^\eps
=
-\eps
\grad \cdot 
(\widehat{\MA} \grad_{\by} \heta)^\eps,
\end{equation*}
and therefore
\begin{align*}
-\div (w_0 (\widehat{\MA} \grad_{\by} \heta))^\eps
&=
-w_0 \div (\widehat{\MA} \grad_{\by} \heta)^\eps
- \grad w_0 \cdot 
(\widehat{\MA} \grad_{\by} \heta)^\eps
\\
&=
-\frac{1}{\eps} (\widehat{\mu}_\eps-\muH)w_0
-\grad w_0 \cdot (\widehat{\MA} \grad_{\by} \heta)^\eps.
\end{align*}
Integrating by parts, we have
\begin{align*}
((\widehat{\mu}^\eps-\muH)w_0,v)_K
&=
-
\eps(\grad w_0 \cdot (\widehat{\MA} \grad_{\by} \heta)^\eps,v)_K
+
\eps 
(\div(w_0(\widehat{\MA}\grad_{\by}\heta))^\eps, v)_K
\\
&=	
-
\eps (\grad w_0 \cdot (\widehat{\MA} \grad_{\by} \heta)^\eps,v)_K
-
\eps (w_0 (\widehat{\MA} \grad_{\by} \heta)^\eps, \grad v)_K		
\\
&+
\eps 
(w_0 (\widehat{\MA} \grad_{\by} \heta )^\eps \cdot \bn_K,v)_{\partial K},
\end{align*}
leading to
\ifPREPRINT
\begin{multline*}
\left |
k^2\left ((\heps^\eps-n_0)w_0,v\right )_K
\right |
\lesssim
\\
k\eps
\left (
k\|w_0\|_{L^2(\partial K)} \|v\|_{L^2(\partial K)}
+
|w_0|_{H^1(K)}\|v\|_{L^2(K)}
+
k\|w_0\|_{L^2(K)} |v|_{H^1(K)}
\right ),
\end{multline*}
\else
\begin{equation*}
\left |
k^2\left ((\heps^\eps-n_0)w_0,v\right )_K
\right |
\lesssim
\\
k\eps
\left (
k\|w_0\|_{L^2(\partial K)} \|v\|_{L^2(\partial K)}
+
|w_0|_{H^1(K)}\|v\|_{L^2(K)}
+
k\|w_0\|_{L^2(K)} |v|_{H^1(K)}
\right ),
\end{equation*}
\fi
where we employed Lemma~\ref{lemma_heta} to estimate the terms related to $\heta$.
Then, \eqref{eq_estimate_L2_homogenization} follows since~\eqref{eq_trace} implies
that
\ifPREPRINT
\begin{multline*}
k\|w_0\|_{L^2(\partial K)} \|v\|_{L^2(\partial K)}
+
|w_0|_{H^1(K)}\|v\|_{L^2(K)}
+
k\|w_0\|_{L^2(K)} |v|_{H^1(K)}
\\
\lesssim
\left (
1 + \frac{1}{kH_K}
\right )
\|w_0\|_{H^1_k(K)}\|v\|_{H^1_k(K)}.
\end{multline*}
\else
\begin{equation*}
k\|w_0\|_{L^2(\partial K)} \|v\|_{L^2(\partial K)}
+
|w_0|_{H^1(K)}\|v\|_{L^2(K)}
+
k\|w_0\|_{L^2(K)} |v|_{H^1(K)}
\\
\lesssim
\left (
1 + \frac{1}{kH_K}
\right )
\|w_0\|_{H^1_k(K)}\|v\|_{H^1_k(K)}.
\end{equation*}
\fi
\end{proof}

\begin{lemma}
\label{lemma_local_homo_H1}
We have
\begin{multline}
\label{eq_estimate_Hx1_homogenization}
\left |
((\MAH-\widehat{\MA}^\eps) \grad w_0,\grad v)_K
-
b^K_\eps
\left (
\eps \widehat{\bchi}^\eps\cdot\grad w_0,v
\right )
\right |
\\
\lesssim
\eps
\left (1+ \frac{1}{k\varepsilon}\frac{1}{kH_K} + \frac{1}{k\varepsilon}\right )^{1/2}
\left (
k\|w_0\|_{H^1_k(K)} + |w_0|_{H^2(K)}
\right )
\|v\|_{H^1_k(K)}.
\end{multline}
\end{lemma}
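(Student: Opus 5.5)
We expand $b_\eps^K(\eps\widehat{\bchi}^\eps\cdot\grad w_0,v)$ with the chain rule~\eqref{eq_delta_derivatives}, setting $w_1 \eq \widehat{\bchi}\cdot\grad w_0$, and compare the result with $((\MAH-\widehat{\MA}^\eps)\grad w_0,\grad v)_K$. Since $\grad(\eps w_1^\eps) = \eps(\grad_{\bx} w_1)^\eps + (\grad_{\by} w_1)^\eps = \eps(\grad_{\bx}w_1)^\eps + (\widehat{\MC}^T\grad w_0)^\eps$, we obtain
\begin{equation*}
b_\eps^K(\eps w_1^\eps,v)
=
-k^2\eps(\widehat{\mu}^\eps w_1^\eps,v)_K
+
\eps(\widehat{\MA}^\eps(\grad_{\bx}w_1)^\eps,\grad v)_K
+
(\widehat{\MA}^\eps(\widehat{\MC}^T\grad w_0)^\eps,\grad v)_K .
\end{equation*}
We bound the first two terms directly with Lemma~\ref{lemma_hchi} and the boundedness of the coefficients; together they are $\lesssim \eps(k\|w_0\|_{H^1_k(K)}+|w_0|_{H^2(K)})\|v\|_{H^1_k(K)}$. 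Up to the transposition convention for $\widehat{\MC}$ (the symmetry of $\widehat{\MA}$ makes the indices consistent), the last term combines with $-(\widehat{\MA}^\eps\grad w_0,\grad v)_K$ to give $-(\widehat{\MB}^\eps\grad w_0,\grad v)_K$. The quantity to estimate therefore reduces, modulo the harmless $O(\eps)$ terms, to $((\MAH-\widehat{\MB}^\eps)\grad w_0,\grad v)_K$.

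This oscillating term has zero cell mean, and we treat it with the vector potential of Proposition~\ref{lemma_vector_potential}. Write $\bq$ with $\grad_{\by}\times\bq = (\widehat{\MB}-\MAH)\grad w_0$. By~\eqref{eq_delta_derivatives},
\begin{equation*}
((\widehat{\MB}-\MAH)\grad w_0)^\eps = \eps\,\grad\times\bq^\eps - \eps(\grad_{\bx}\times\bq)^\eps .
\end{equation*}
The second piece contributes $\eps|((\grad_{\bx}\times\bq)^\eps,\grad v)_K| \lesssim \eps|w_0|_{H^2(K)}|v|_{H^1(K)}$. For the first piece, a curl is divergence-free, so integrating by parts gives
\begin{equation*}
\eps(\grad\times\bq^\eps,\grad v)_K = \eps\langle \grad\times\bq^\eps\cdot\bn_K, v\rangle_{\partial K} .
\end{equation*}
By the boundary estimate of Proposition~\ref{lemma_vector_potential}, this is bounded by $\eps\left(1+\frac{1}{kH_K}\frac{1}{k\eps}+\frac{1}{k\eps}\right)^{1/2}(k\|w_0\|_{H^1_k(K)}+|w_0|_{H^2(K)})\,\|v\|_{H^{1/2}(\partial K)}$.

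The main obstacle is this boundary term: we have to control $\|v\|_{H^{1/2}(\partial K)}$ by $\|v\|_{H^1_k(K)}$ uniformly in $H_K$, which needs the norm on $H^{1/2}(\partial K)$ to be the $k$-weighted one implicit in Proposition~\ref{lemma_vector_potential}. I would take the norm for which the trace map $H^1_k(K)\to H^{1/2}(\partial K)$ is bounded with a constant independent of $H_K$ (a minimal-extension norm). I would then check that the bound of Proposition~\ref{lemma_vector_potential}, including the $(kH_K)^{-1}$ factor, is stated with respect to this dual pairing, using~\eqref{eq_trace} if the $L^2$ part of the norm has to be adjusted. Collecting all contributions, the $O(\eps)$ terms are absorbed because the bracket in~\eqref{eq_estimate_Hx1_homogenization} is at least one. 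This yields~\eqref{eq_estimate_Hx1_homogenization}.
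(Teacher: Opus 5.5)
Your proposal follows the paper's proof essentially step for step: you expand $b_\eps^K(\eps w_1^\eps,v)$ by the chain rule into two $O(\eps)$ terms handled by Lemma~\ref{lemma_hchi} plus $((\MAH-\widehat{\MB}^\eps)\grad w_0,\grad v)_K$, you use the vector potential of Proposition~\ref{lemma_vector_potential} through $(\grad_{\by}\times\bq)^\eps=\eps\grad\times\bq^\eps-\eps(\grad_{\bx}\times\bq)^\eps$, and you integrate by parts to the boundary pairing, where the paper simply uses $\|v\|_{H^{1/2}(\partial K)}\lesssim\|v\|_{H^1_k(K)}$ without the norm discussion you add. One caveat, shared with the paper: with $\widehat{\MC}_{j\ell}=\partial_{y_j}\widehat{\chi}_\ell$ one has $\grad_{\by}(\widehat{\bchi}\cdot\grad w_0)=\widehat{\MC}\grad w_0$, so your combination actually yields $\widehat{\MA}(\MI+\widehat{\MC})$ rather than $\widehat{\MB}=\widehat{\MA}(\MI-\widehat{\MC})$; this is a sign mismatch with the cell problem~\eqref{eq_definition_hchi}, not a transposition issue, symmetry of $\widehat{\MA}$ cannot repair it, and it would in general leave a remainder with nonzero cell mean, so the argument only goes through with the corrector $-\eps\widehat{\bchi}^\eps\cdot\grad w_0$, which is the sign the paper's own proof tacitly uses when it writes $-\frac{1}{\eps}(\widehat{\MA}\widehat{\MC})^\eps\grad w_0$.
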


\begin{proof}
In this proof, we will employ the properties of $\widehat{\bchi}$
described in Section~\ref{section_homogenized_coefficients},
and for shortness, we let $w_1 \eq \widehat{\bchi} \cdot \grad w_0$.
Using~\eqref{eq_delta_derivatives}, we write that
\begin{equation*}
\widehat{\MA}^\eps\grad (w_1^\eps) 
= 
\widehat{\MA}^\eps(\grad_{\bx} w_1)^\eps 
+
\frac{1}{\eps} 
\widehat{\MA}^\eps(\grad_{\by} w_1)^\eps 
= 
(\widehat{\MA}\grad_{\bx} w_1)^\eps 
-
\frac{1}{\eps}(\widehat{\MA}\widehat{\MC})^\eps\grad w_0,
\end{equation*}
As a result, we have
\begin{equation*}
\eps(\widehat{\MA}^\eps\grad (w_1^\eps) ,\grad v)_K
=
\eps((\widehat{\MA}\grad_{\bx} w_1)^\eps,\grad v)_K
-
((\widehat{\MA}\widehat{\MC})^\eps\grad w_0,\grad v)_K.
\end{equation*}
In addition, recalling that
$(\MAH- \widehat{\MA}^\eps) + (\widehat{\MA}\widehat{\MC})^\eps
=
\MAH - \widehat{\MB}^\eps$
and that
\begin{equation*}
\eps(\widehat{\MA}^\eps\grad (w_1^\eps),\grad v)_K
=
b_\eps^K (\eps w_1^\eps,v)
+
k^2 \eps (\widehat{\mu}^\eps  w_1^\eps,v)_K,
\end{equation*}
we arrive at
\ifPREPRINT
\begin{multline*}
((\MAH-\widehat{\MA}^\eps) \grad w_0,\grad v)_K
-
b^K_\eps(\eps w_1^\eps,v)
\\
=
((\MAH -\widehat{\MB}^\eps) \grad w_0,\grad v)_K
-
\eps((\widehat{\MA}\grad_{\bx} w_1)^\eps,\grad v)_K
+
k^2 \eps(\widehat{\mu}^\eps w_1^\eps,v)_K.
\end{multline*}
\else
\begin{equation*}
((\MAH-\widehat{\MA}^\eps) \grad w_0,\grad v)_K
-
b^K_\eps(\eps w_1^\eps,v)
\\
=
((\MAH -\widehat{\MB}^\eps) \grad w_0,\grad v)_K
-
\eps((\widehat{\MA}\grad_{\bx} w_1)^\eps,\grad v)_K
+
k^2 \eps(\widehat{\mu}^\eps w_1^\eps,v)_K.
\end{equation*}
\fi

We now treat the first term in right-hand side
using the vector potential $\bq$. By~\eqref{eq_delta_derivatives}
\begin{equation*}
(\grad_{\by} \times \bq)^\eps
=
\eps (\grad \times(\bq^\eps) - (\grad_{\bx} \times \bq)^\eps)
\end{equation*}
thus, using the divergence theorem and the fact that
$\grad \cdot \grad  \times \bq^\eps = 0$,
\begin{align*}
((\MAH -\widehat{\MB}^\eps) \grad w_0,\grad v)_K
&=
((\grad_{\by} \times \bq)^\eps,\grad v)_K
\\
&=
\eps (\grad \times \bq ^\eps,\grad v)_K
-
\eps ((\grad_{\bx} \times  \bq )^\eps,\grad v)_K
\\
&=
\eps ((\grad \times \bq ^\eps)\cdot\bn_K,v)_{\partial K}
-
\eps ((\grad_{\bx} \times \bq )^\eps,\grad v)_K
\end{align*}
We then obtain
\begin{align*}
((\MAH-\widehat{\MA}^\eps) \grad w_0 ,\grad v)_K
-
b^K_\eps(\eps w_1^\eps, v)
=
&\eps
((\grad \times \bq^\eps)\cdot\bn_K, v)_{\partial K}
\\
&-
\eps
((\grad_{\bx} \times \bq)^\eps, \grad v)_K
-
\eps((\widehat{\MA}\grad_{\bx} w_1)^\eps,\grad v)_K
+
k^2 \eps(\widehat{\mu}^\eps w_1^\eps, v)_K.
\end{align*}
Now, it remains to bound the four terms in the right-hand side.
For the first, using Lemma~\ref{lemma_hchi}, we immediatly have
\begin{equation*}
k^2\eps |(\widehat{\mu}^\eps w_1^\eps,v)_K|
\lesssim
k^2\eps\|w_1^\eps\|_{L^2(K)} \|v\|_{L^2(K)}
\lesssim
\eps
k\|w_0\|_{H^1_k(K)}
\|v\|_{H^1_k(K)}.
\end{equation*}
For the second term, employing again Lemma~\ref{lemma_hchi},
we observe that
\begin{equation*}
\eps
|((\widehat{\MA}\grad_{\bx} w_1)^\eps,\grad v)_K|
\lesssim
\eps
\|(\grad_{\bx} w_1)^\eps\|_{\BL^2(K)}
|v|_{H^1(K)}
\lesssim
\eps|w_0|_{H^2(K)} \|v\|_{H^1_k(K)}.
\end{equation*}

Finally, the last two terms are easily dealt with using
Lemma~\ref{lemma_vector_potential}, since
\begin{equation*}
\eps |((\grad_{\bx} \times  \bq )^\eps,\grad v)_K|
\lesssim
\eps
\|(\grad_x \times \bq)^\varepsilon \|_{\BL^2(K)} 
|v|_{H^1(K)}
\lesssim
\eps
\left (
k\|w_0\|_{H^1_k(K)} + |w_0|_{H^2(K)}
\right )
\|v\|_{H^1_k(K)}
\end{equation*}
and 
\begin{align*}
\eps|((\grad \times \bq^\eps)\cdot\bn_K,v)_{\partial K}|
&\lesssim
\eps
\|\grad \times \bq^\varepsilon \cdot \bn_K\|_{H^{-1/2}(\partial K)} 
\|v\|_{H^{1/2}(K)}
\\
&\lesssim
\eps
\left (1+ \frac{1}{k\varepsilon}\frac{1}{kH_K} + \frac{1}{k\varepsilon}\right )^{1/2}
(k\|w_0\|_{H^1_k(K)} + |w_0|_{2,K})
\|v\|_{H^1_k(K)}.
\end{align*}
\end{proof}

\bibliography{bibliography}
\bibliographystyle{plain}

\end{document}

%% file: general_commands.tex
\renewcommand{\Re}{\operatorname{Re}}

\newcommand{\eq}{:=}

\newcommand{\ds}{\displaystyle}

\newcommand{\pd}[2]{\frac{\partial #1}{\partial #2}}

\newcommand{\grad}{\boldsymbol \nabla}
\renewcommand{\div}{\grad \cdot}

\newcommand{\ddiv}{\operatorname{div}}

\newcommand{\R}{\mathbb{R}}

\newcommand{\N}{\mathbb{N}}

\newcommand{\BH}{\boldsymbol H}

\newcommand{\BL}{\boldsymbol L}

\newcommand{\BQ}{\boldsymbol Q}

\newcommand{\BW}{\boldsymbol W}

\newcommand{\bd}{\boldsymbol d}

\newcommand{\bn}{\boldsymbol n}
\newcommand{\bo}{\boldsymbol o}

\newcommand{\bq}{\boldsymbol q}

\newcommand{\bx}{\boldsymbol x}
\newcommand{\by}{\boldsymbol y}

\newcommand{\CF}{\mathcal F}

\newcommand{\CK}{\mathcal K}

\newcommand{\CP}{\mathcal P}
\newcommand{\CQ}{\mathcal Q}

\newcommand{\CT}{\mathcal T}

\newcommand{\LC}{\mathscr C}

\newcommand{\LM}{\mathscr M}

\newcommand{\BCP}{\boldsymbol{\CP}}



%% file: specific_commands.tex
\newcommand{\Om}{\Omega}

\newcommand{\Cst}{\LC_{\rm st}}

\newcommand{\Capp}{\LC_{\rm app}}

\newtheorem{theorem}{Theorem}
\newtheorem{lemma}[theorem]{Lemma}
\newtheorem{corollary}[theorem]{Corollary}
\newtheorem{proposition}[theorem]{Proposition}
\newtheorem{remark}[theorem]{Remark}

\numberwithin{equation}{section}
\numberwithin{theorem}{section}

\newcommand{\eps}{\varepsilon}

\newcommand{\Cste}{\LC_{\rm st}^\eps}
\newcommand{\Cstep}{\LC_{\rm st}^{\eps'}}

\newcommand{\hchi}{\widehat \chi}

\newcommand{\heta}{\widehat \eta}

\newcommand{\heps}{\widehat \eps}

\newcommand{\mean}[1]{\langle #1 \rangle_Y}

\newcommand{\hT}{\widehat{T}_\eps}
\newcommand{\hTo}{\widehat{T}_0}

\newcommand{\CstH}{\Cst^{\rm H}}

\newcommand{\bxi}{\boldsymbol \xi}

\newcommand{\Amin}{A_{\rm min}}
\newcommand{\Amax}{A_{\rm max}}

\newcommand{\hD}{\ell_D}

\newcommand{\tens}[1]{\underline{\boldsymbol #1}}

\newcommand{\MW}{\tens{W}}

\newcommand{\MA}{\tens{A}}
\newcommand{\MI}{\tens{I}}
\newcommand{\MB}{\tens{B}}
\newcommand{\MC}{\tens{C}}

\newcommand{\MAH}{\tens{A}_{\rm H}}
\newcommand{\muH}{\mu_{\rm H}}

\newcommand{\bchi}{\boldsymbol \chi}

%% file: slope_triangle.tex
\newcommand{\SlopeTriangle}[6]
{

    \pgfplotsextra
    {
        \pgfkeysgetvalue{/pgfplots/xmin}{\xmin}
        \pgfkeysgetvalue{/pgfplots/xmax}{\xmax}
        \pgfkeysgetvalue{/pgfplots/ymin}{\ymin}
        \pgfkeysgetvalue{/pgfplots/ymax}{\ymax}

        \pgfmathsetmacro{\xArel}{#1}
        \pgfmathsetmacro{\yArel}{#3}
        \pgfmathsetmacro{\xBrel}{#1-#2}
        \pgfmathsetmacro{\yBrel}{\yArel}
        \pgfmathsetmacro{\xCrel}{\xArel}

        \pgfmathsetmacro{\lnxB}{\xmin*(1-(#1-#2))+\xmax*(#1-#2)} 
        \pgfmathsetmacro{\lnxA}{\xmin*(1-#1)+\xmax*#1} 
        \pgfmathsetmacro{\lnyA}{\ymin*(1-#3)+\ymax*#3} 
        \pgfmathsetmacro{\lnyC}{\lnyA+#4*(\lnxA-\lnxB)}
        \pgfmathsetmacro{\yCrel}{\lnyC-\ymin)/(\ymax-\ymin)} 

        \coordinate (A) at (rel axis cs:\xArel,\yArel);
        \coordinate (B) at (rel axis cs:\xBrel,\yBrel);
        \coordinate (C) at (rel axis cs:\xCrel,\yCrel);

        \draw[#6]   (A)-- node[anchor=north] {#5}
                    (B)--
                    (C)--
                    cycle;
    }
}

%% file: figures/figure_images.tex
\center

\begin{minipage}{.49\linewidth}
\begin{tikzpicture}
\draw (-3.75,4.00) node[anchor=south west,inner sep=0] {\includegraphics[width=7.5cm,height=0.5cm]{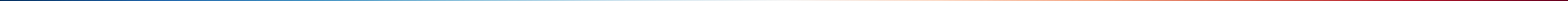}};
\draw[thick] (-3.75,4.00) rectangle (3.75,4.50);

\draw (-3.75,4.50) node[anchor=south west] {$-2$};
\draw ( 0.00,4.50) node[anchor=south] {$ 0$};
\draw ( 3.75,4.50) node[anchor=south east] {$+2$};

\draw (-3.75,-3.75) node[anchor=south west,inner sep=0pt] {\includegraphics[width=7.5cm]{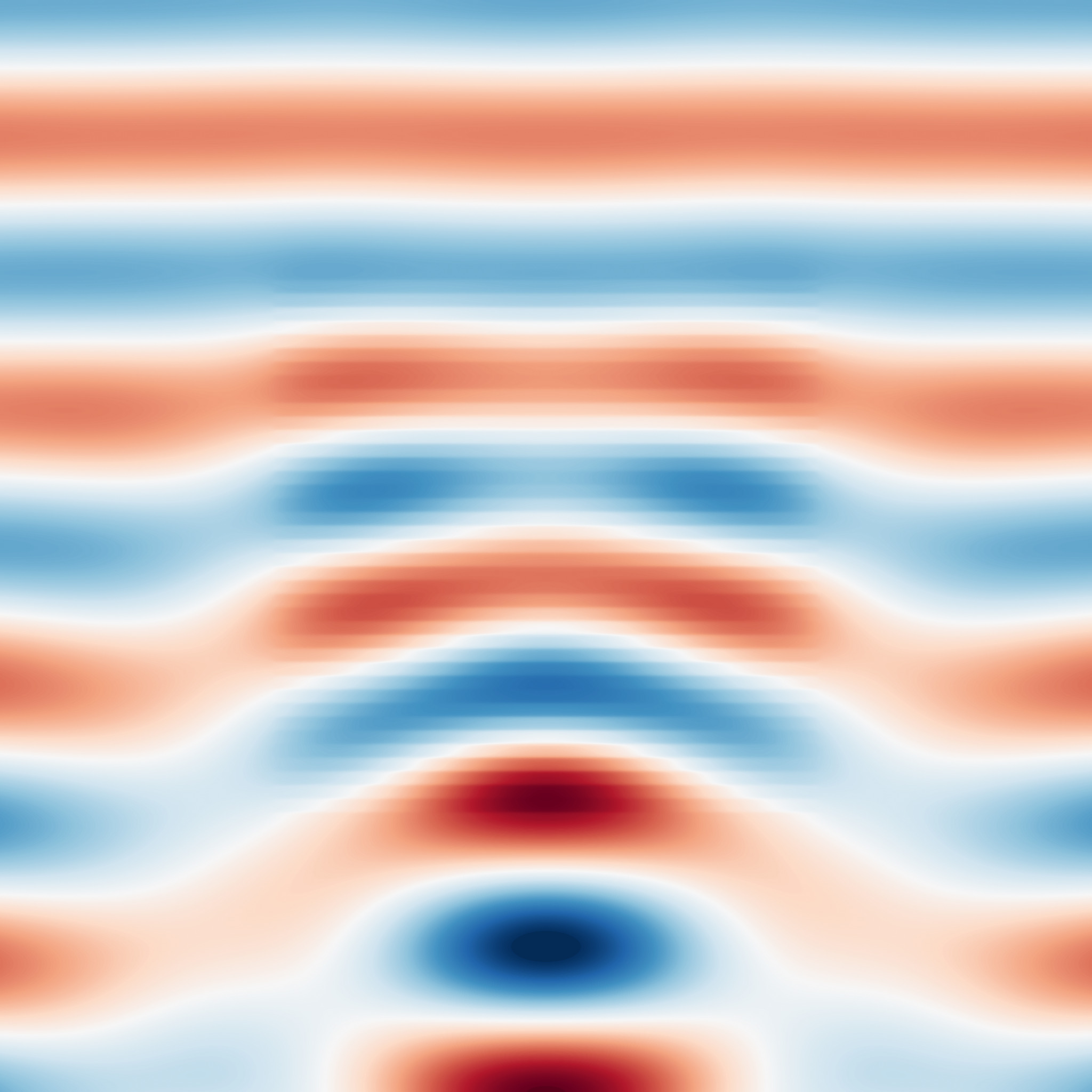}};

\draw[thick,dotted] (-1.875,-1.875) rectangle (1.875,1.875);
\end{tikzpicture}
\subcaption{$\omega = 2\pi$ and $\eps = 0.05$}
\end{minipage}
\begin{minipage}{.49\linewidth}
\begin{tikzpicture}
\draw (-3.75,4.00) node[anchor=south west,inner sep=0] {\includegraphics[width=7.5cm,height=0.5cm]{figures/data/bwr}};
\draw[thick] (-3.75,4.00) rectangle (3.75,4.50);

\draw (-3.75,4.50) node[anchor=south west] {$-2$};
\draw ( 0.00,4.50) node[anchor=south] {$ 0$};
\draw ( 3.75,4.50) node[anchor=south east] {$+2$};

\draw (-3.75,-3.75) node[anchor=south west,inner sep=0pt] {\includegraphics[width=7.5cm]{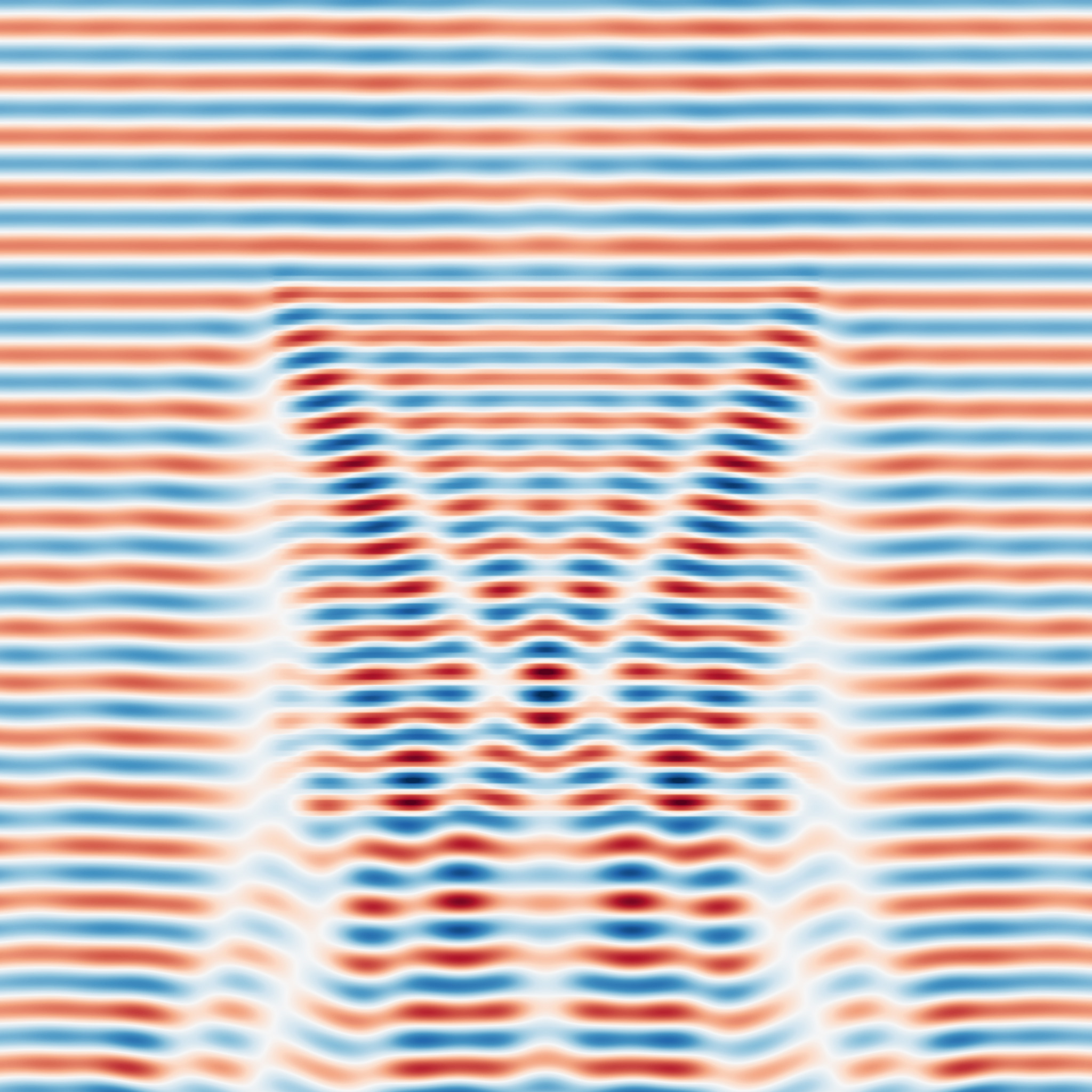}};

\draw[thick,dotted] (-1.875,-1.875) rectangle (1.875,1.875);
\end{tikzpicture}
\subcaption{$\omega = 10\pi$ and $\eps = 0.02$}
\end{minipage}

%% file: figures/figure_images_diff.tex
\center

\begin{minipage}{.49\linewidth}

\begin{tikzpicture}
\draw (-3.75,4.00) node[anchor=south west,inner sep=0] {\includegraphics[width=7.5cm,height=0.5cm]{figures/data/bwr}};
\draw[thick] (-3.75,4.00) rectangle (3.75,4.50);

\draw (-3.75,4.50) node[anchor=south west] {$-30$};
\draw ( 0.00,4.50) node[anchor=south] {$ 0$};
\draw ( 3.75,4.50) node[anchor=south east] {$+30$};

\draw[ultra thick,dotted] (-3.75,-3.75) rectangle (3.75,3.75);

\clip (-3.75,-3.75) rectangle (3.75,3.75);
\draw (-7.50,-7.50) node[anchor=south west,inner sep=0pt] {\includegraphics[width=15cm]{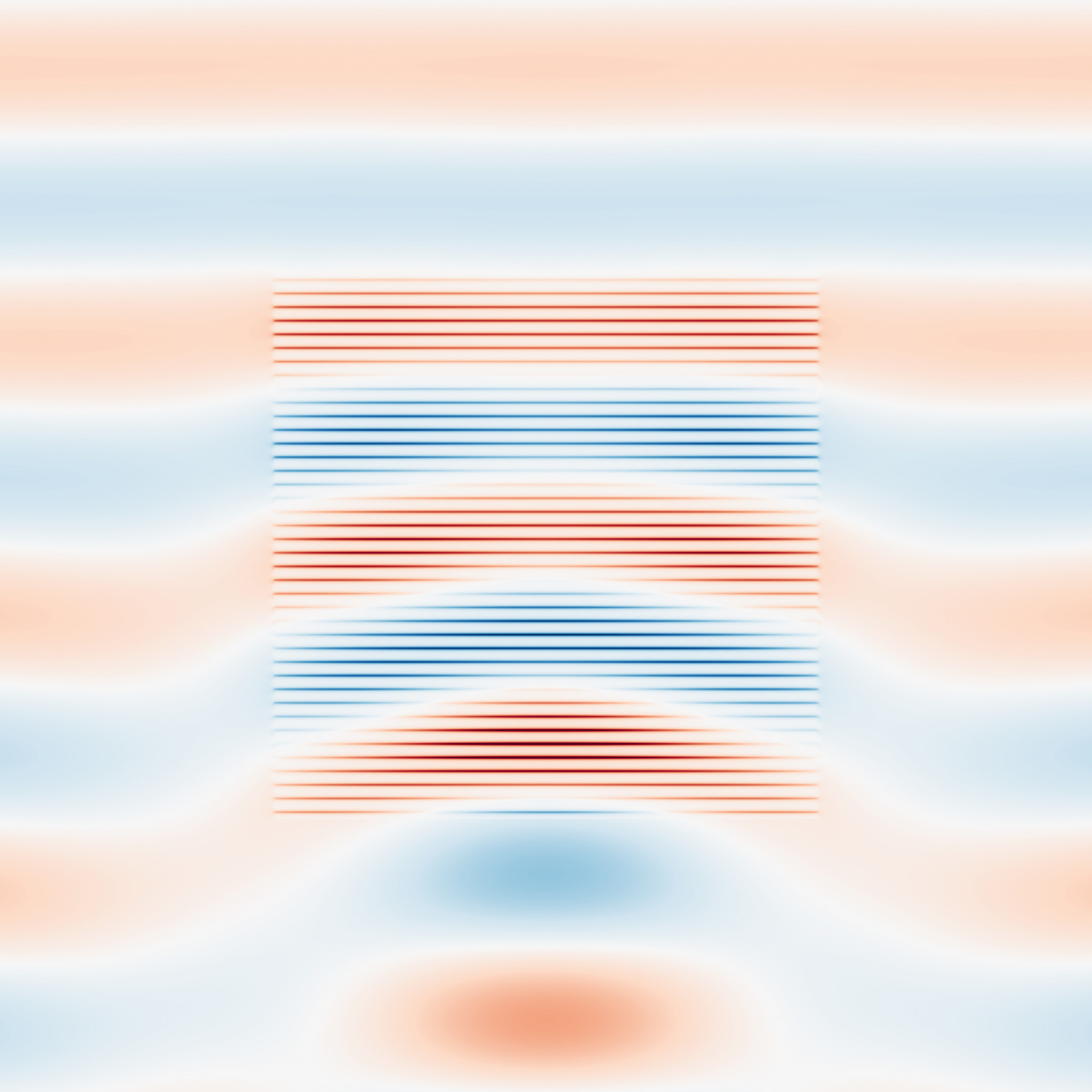}};

\draw[ultra thick,dotted] (-3.75,-3.75) rectangle (3.75,3.75);
\end{tikzpicture}
\subcaption{$\omega = 2\pi$ and $\eps = 0.05$}
\end{minipage}
\begin{minipage}{.49\linewidth}
\begin{tikzpicture}
\draw (-3.75,4.00) node[anchor=south west,inner sep=0] {\includegraphics[width=7.5cm,height=0.5cm]{figures/data/bwr}};
\draw[thick] (-3.75,4.00) rectangle (3.75,4.50);

\draw (-3.75,4.50) node[anchor=south west] {$-100$};
\draw ( 0.00,4.50) node[anchor=south] {$ 0$};
\draw ( 3.75,4.50) node[anchor=south east] {$+100$};

\draw[ultra thick,dotted] (-3.75,-3.75) rectangle (3.75,3.75);

\clip (-3.75,-3.75) rectangle (3.75,3.75);
\draw (-7.50,-7.50) node[anchor=south west,inner sep=0pt] {\includegraphics[width=15cm]{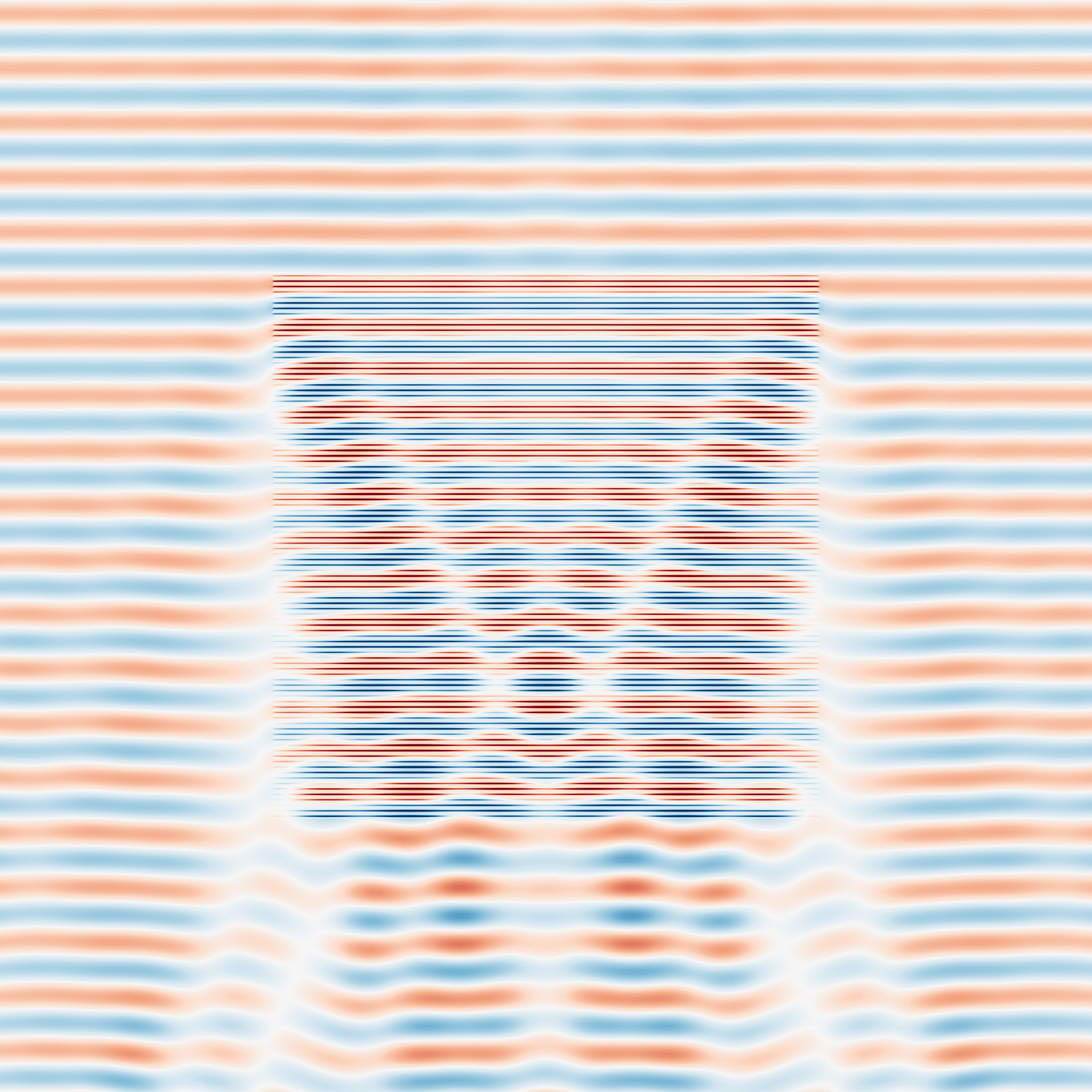}};

\draw[ultra thick,dotted] (-3.75,-3.75) rectangle (3.75,3.75);
\end{tikzpicture}
\subcaption{$\omega = 10\pi$ and $\eps = 0.02$}
\end{minipage}

%% file: figures/legend.tex
\centering
\begin{tikzpicture}
\begin{axis}
[
	hide axis,
	width=\linewidth,
	height=3cm,
	xmin=-3,
	xmax=3,
	ymin=-2,
	ymax= 1
]

\plot[solid                      ,mark=square*,black         ,domain=-2.5:-0.5,samples=5] {-1}
node[pos=0.5,anchor=north] {$\ell=1$ multiscale};
\plot[solid                      ,mark=*      ,blue          ,domain= 0.5: 2.5,samples=5] {-1}
node[pos=0.5,anchor=north] {$\ell=0$ multiscale};
\plot[dashed,mark options={solid},mark=square ,red           ,domain=-2.5:-0.5,samples=5] { 1}
node[pos=0.5,anchor=north] {$\ell=1$ coarse};
\plot[dashed,mark options={solid},mark=o      ,black!50!green,domain= 0.5: 2.5,samples=5] { 1}
node[pos=0.5,anchor=north] {$\ell=0$ coarse};

\end{axis}
\end{tikzpicture}

%% file: figures/figure_F1_N.tex
\begin{minipage}{.45\linewidth}
\begin{tikzpicture}
\begin{axis}
[
	width=\linewidth,
	xmode=log,
	ymode=log,
	xlabel={$N$},
	ymax=2,
	ymin=1.e-5
]

\plot[solid                      ,mark=square*,black         ] table[x=N,y=p1f] {figures/data/curve_1_0.200.txt};
\plot[solid                      ,mark=*      ,blue          ] table[x=N,y=p0f] {figures/data/curve_1_0.200.txt};
\plot[dashed,mark options={solid},mark=square ,red           ] table[x=N,y=p1c] {figures/data/curve_1_0.200.txt};
\plot[dashed,mark options={solid},mark=o      ,black!50!green] table[x=N,y=p0c] {figures/data/curve_1_0.200.txt};

\end{axis}
\end{tikzpicture}
\subcaption{$\eps = 0.200$}
\end{minipage}
\begin{minipage}{.45\linewidth}
\begin{tikzpicture}
\begin{axis}
[
	width=\linewidth,
	xmode=log,
	ymode=log,
	xlabel={$N$},
	ymax=2,
	ymin=1.e-5
]

\plot[solid                      ,mark=square*,black         ] table[x=N,y=p1f] {figures/data/curve_1_0.100.txt};
\plot[solid                      ,mark=*      ,blue          ] table[x=N,y=p0f] {figures/data/curve_1_0.100.txt};
\plot[dashed,mark options={solid},mark=square ,red           ] table[x=N,y=p1c] {figures/data/curve_1_0.100.txt};
\plot[dashed,mark options={solid},mark=o      ,black!50!green] table[x=N,y=p0c] {figures/data/curve_1_0.100.txt};

\end{axis}
\end{tikzpicture}
\subcaption{$\eps = 0.100$}
\end{minipage}

\begin{minipage}{.45\linewidth}
\begin{tikzpicture}
\begin{axis}
[
	width=\linewidth,
	xmode=log,
	ymode=log,
	xlabel={$N$},
	ymax=2,
	ymin=1.e-5
]

\plot[solid                      ,mark=square*,black         ] table[x=N,y=p1f] {figures/data/curve_1_0.050.txt};
\plot[solid                      ,mark=*      ,blue          ] table[x=N,y=p0f] {figures/data/curve_1_0.050.txt};
\plot[dashed,mark options={solid},mark=square ,red           ] table[x=N,y=p1c] {figures/data/curve_1_0.050.txt};
\plot[dashed,mark options={solid},mark=o      ,black!50!green] table[x=N,y=p0c] {figures/data/curve_1_0.050.txt};

\end{axis}
\end{tikzpicture}
\subcaption{$\eps = 0.050$}
\end{minipage}
\begin{minipage}{.45\linewidth}
\begin{tikzpicture}
\begin{axis}
[
	width=\linewidth,
	xmode=log,
	ymode=log,
	xlabel={$N$},
	ymax=2,
	ymin=1.e-5
]

\plot[solid                      ,mark=square*,black         ] table[x=N,y=p1f] {figures/data/curve_1_0.020.txt};
\plot[solid                      ,mark=*      ,blue          ] table[x=N,y=p0f] {figures/data/curve_1_0.020.txt};
\plot[dashed,mark options={solid},mark=square ,red           ] table[x=N,y=p1c] {figures/data/curve_1_0.020.txt};
\plot[dashed,mark options={solid},mark=o      ,black!50!green] table[x=N,y=p0c] {figures/data/curve_1_0.020.txt};

\end{axis}
\end{tikzpicture}
\subcaption{$\eps = 0.020$}
\end{minipage}

\begin{minipage}{.45\linewidth}
\begin{tikzpicture}
\begin{axis}
[
	width=\linewidth,
	xmode=log,
	ymode=log,
	xlabel={$N$},
	ymax=2,
	ymin=1.e-5
]

\plot[solid                      ,mark=square*,black         ] table[x=N,y=p1f] {figures/data/curve_1_0.010.txt};
\plot[solid                      ,mark=*      ,blue          ] table[x=N,y=p0f] {figures/data/curve_1_0.010.txt};
\plot[dashed,mark options={solid},mark=square ,red           ] table[x=N,y=p1c] {figures/data/curve_1_0.010.txt};
\plot[dashed,mark options={solid},mark=o      ,black!50!green] table[x=N,y=p0c] {figures/data/curve_1_0.010.txt};

\end{axis}
\end{tikzpicture}
\subcaption{$\eps = 0.010$}
\end{minipage}
\begin{minipage}{.45\linewidth}
\begin{tikzpicture}
\begin{axis}
[
	width=\linewidth,
	xmode=log,
	ymode=log,
	xlabel={$N$},
	ymax=2,
	ymin=1.e-5
]

\plot[solid                      ,mark=square*,black         ] table[x=N,y=p1f] {figures/data/curve_1_0.005.txt};
\plot[solid                      ,mark=*      ,blue          ] table[x=N,y=p0f] {figures/data/curve_1_0.005.txt};
\plot[dashed,mark options={solid},mark=square ,red           ] table[x=N,y=p1c] {figures/data/curve_1_0.005.txt};
\plot[dashed,mark options={solid},mark=o      ,black!50!green] table[x=N,y=p0c] {figures/data/curve_1_0.005.txt};

\end{axis}
\end{tikzpicture}
\subcaption{$\eps = 0.005$}
\end{minipage}

%% file: figures/figure_F5_N.tex
\begin{minipage}{.45\linewidth}
\begin{tikzpicture}
\begin{axis}
[
	width=\linewidth,
	xmode=log,
	ymode=log,
	xlabel={$N$},
	ymax=2,
	ymin=1.e-5
]

\plot[solid                      ,mark=square*,black         ] table[x=N,y=p1f] {figures/data/curve_5_0.200.txt};
\plot[solid                      ,mark=*      ,blue          ] table[x=N,y=p0f] {figures/data/curve_5_0.200.txt};
\plot[dashed,mark options={solid},mark=square ,red           ] table[x=N,y=p1c] {figures/data/curve_5_0.200.txt};
\plot[dashed,mark options={solid},mark=o      ,black!50!green] table[x=N,y=p0c] {figures/data/curve_5_0.200.txt};

\end{axis}
\end{tikzpicture}
\subcaption{$\eps = 0.200$}
\end{minipage}
\begin{minipage}{.45\linewidth}
\begin{tikzpicture}
\begin{axis}
[
	width=\linewidth,
	xmode=log,
	ymode=log,
	xlabel={$N$},
	ymax=2,
	ymin=1.e-5
]

\plot[solid                      ,mark=square*,black         ] table[x=N,y=p1f] {figures/data/curve_5_0.100.txt};
\plot[solid                      ,mark=*      ,blue          ] table[x=N,y=p0f] {figures/data/curve_5_0.100.txt};
\plot[dashed,mark options={solid},mark=square ,red           ] table[x=N,y=p1c] {figures/data/curve_5_0.100.txt};
\plot[dashed,mark options={solid},mark=o      ,black!50!green] table[x=N,y=p0c] {figures/data/curve_5_0.100.txt};

\end{axis}
\end{tikzpicture}
\subcaption{$\eps = 0.100$}
\end{minipage}

\begin{minipage}{.45\linewidth}
\begin{tikzpicture}
\begin{axis}
[
	width=\linewidth,
	xmode=log,
	ymode=log,
	xlabel={$N$},
	ymax=2,
	ymin=1.e-5
]

\plot[solid                      ,mark=square*,black         ] table[x=N,y=p1f] {figures/data/curve_5_0.050.txt};
\plot[solid                      ,mark=*      ,blue          ] table[x=N,y=p0f] {figures/data/curve_5_0.050.txt};
\plot[dashed,mark options={solid},mark=square ,red           ] table[x=N,y=p1c] {figures/data/curve_5_0.050.txt};
\plot[dashed,mark options={solid},mark=o      ,black!50!green] table[x=N,y=p0c] {figures/data/curve_5_0.050.txt};

\end{axis}
\end{tikzpicture}
\subcaption{$\eps = 0.050$}
\end{minipage}
\begin{minipage}{.45\linewidth}
\begin{tikzpicture}
\begin{axis}
[
	width=\linewidth,
	xmode=log,
	ymode=log,
	xlabel={$N$},
	ymax=2,
	ymin=1.e-5
]

\plot[solid                      ,mark=square*,black         ] table[x=N,y=p1f] {figures/data/curve_5_0.020.txt};
\plot[solid                      ,mark=*      ,blue          ] table[x=N,y=p0f] {figures/data/curve_5_0.020.txt};
\plot[dashed,mark options={solid},mark=square ,red           ] table[x=N,y=p1c] {figures/data/curve_5_0.020.txt};
\plot[dashed,mark options={solid},mark=o      ,black!50!green] table[x=N,y=p0c] {figures/data/curve_5_0.020.txt};

\end{axis}
\end{tikzpicture}
\subcaption{$\eps = 0.020$}
\end{minipage}

\begin{minipage}{.45\linewidth}
\begin{tikzpicture}
\begin{axis}
[
	width=\linewidth,
	xmode=log,
	ymode=log,
	xlabel={$N$},
	ymax=2,
	ymin=1.e-5
]

\plot[solid                      ,mark=square*,black         ] table[x=N,y=p1f] {figures/data/curve_5_0.010.txt};
\plot[solid                      ,mark=*      ,blue          ] table[x=N,y=p0f] {figures/data/curve_5_0.010.txt};
\plot[dashed,mark options={solid},mark=square ,red           ] table[x=N,y=p1c] {figures/data/curve_5_0.010.txt};
\plot[dashed,mark options={solid},mark=o      ,black!50!green] table[x=N,y=p0c] {figures/data/curve_5_0.010.txt};

\end{axis}
\end{tikzpicture}
\subcaption{$\eps = 0.010$}
\end{minipage}
\begin{minipage}{.45\linewidth}
\begin{tikzpicture}
\begin{axis}
[
	width=\linewidth,
	xmode=log,
	ymode=log,
	xlabel={$N$},
	ymax=2,
	ymin=1.e-5
]

\plot[solid                      ,mark=square*,black         ] table[x=N,y=p1f] {figures/data/curve_5_0.005.txt};
\plot[solid                      ,mark=*      ,blue          ] table[x=N,y=p0f] {figures/data/curve_5_0.005.txt};
\plot[dashed,mark options={solid},mark=square ,red           ] table[x=N,y=p1c] {figures/data/curve_5_0.005.txt};
\plot[dashed,mark options={solid},mark=o      ,black!50!green] table[x=N,y=p0c] {figures/data/curve_5_0.005.txt};

\end{axis}
\end{tikzpicture}
\subcaption{$\eps = 0.005$}
\end{minipage}

%% file: figures/figure_eps.tex
\begin{minipage}{.45\linewidth}
\begin{tikzpicture}
\begin{axis}
[
	width=\linewidth,
	xmode=log,
	ymode=log,
	xlabel={$\eps$},
	x dir=reverse,
	ymax=2,
	ymin=1.e-3
]

\plot[solid                      ,mark=square*,black         ] table[x=eps,y=p1f] {figures/data/curve_1_0032.txt};
\plot[solid                      ,mark=*      ,blue          ] table[x=eps,y=p0f] {figures/data/curve_1_0032.txt};
\plot[dashed,mark options={solid},mark=square ,red           ] table[x=eps,y=p1c] {figures/data/curve_1_0032.txt};
\plot[dashed,mark options={solid},mark=o      ,black!50!green] table[x=eps,y=p0c] {figures/data/curve_1_0032.txt};

\plot[dotted,domain=0.005:0.05] {0.5*x};

\SlopeTriangle{0.7}{-0.1}{0.15}{-1}{$\eps$}{}

\end{axis}
\end{tikzpicture}
\subcaption{$\omega = 2\pi$ with $N=32$}
\end{minipage}
\begin{minipage}{.45\linewidth}
\begin{tikzpicture}
\begin{axis}
[
	width=\linewidth,
	xmode=log,
	ymode=log,
	xlabel={$\eps$},
	x dir=reverse,
	ymax=2,
	ymin=1.e-3
]

\plot[solid                      ,mark=square*,black         ] table[x=eps,y=p1f] {figures/data/curve_5_0128.txt};
\plot[solid                      ,mark=*      ,blue          ] table[x=eps,y=p0f] {figures/data/curve_5_0128.txt};
\plot[dashed,mark options={solid},mark=square ,red           ] table[x=eps,y=p1c] {figures/data/curve_5_0128.txt};
\plot[dashed,mark options={solid},mark=o      ,black!50!green] table[x=eps,y=p0c] {figures/data/curve_5_0128.txt};

\plot[dotted,domain=0.005:0.02] {5*x};

\SlopeTriangle{0.7}{-0.1}{0.45}{-1}{$\eps$}{}

\end{axis}
\end{tikzpicture}
\subcaption{$\omega = 10\pi$ with $N=128$}
\end{minipage}